\sloppy\pagestyle{plain}
\newtheorem{theorem}[equation]{Theorem}
\newtheorem{lemma}[equation]{Lemma}
\newtheorem{corollary}[equation]{Corollary}
\theoremstyle{definition}
\newtheorem{example}[equation]{Example}
\newtheorem{definition}[equation]{Definition}
\theoremstyle{remark}
\newtheorem{remark}[equation]{Remark}
\makeatletter\@addtoreset{equation}{section} \makeatother
\author{Ivan Cheltsov}
\title{On singular cubic surfaces}
\address{\begin{tabbing}
\hspace*{28 em}\=\kill
School of Mathematics\\
University of Edinburgh\\
Edinburgh EH9 3JZ, UK\\
\\
\texttt{I.Cheltsov@ed.ac.uk}
\end{tabbing}}
\thanks{The author would like to thank I.\,Dolgachev, J.\,Koll\'ar,
J.\,Park, V.\,Sho\-ku\-rov, V.\,Iskovskikh for useful comments.}
\begin{document}

\begin{abstract}
We study global log canonical thresholds of cubic surfaces with
canonical singularities, and we prove the existence of a
K\"ahler--Einstein metric on two singular cubic~surfaces.
\end{abstract}

\maketitle

\section{Introduction.}
\label{section:intro}

Let $X$ be a Fano variety\footnote{We assume that all varieties
are projective, normal, and defined over $\mathbb{C}$.} with log
terminal singularities, and $G$ be a finite subgroup in
$\mathrm{Aut}(X)$.

\begin{definition}
\label{definition:threshold} Global $G$-invariant log canonical
threshold of the variety $X$ is the number
$$
\mathrm{lct}\big(X,G\big)=\mathrm{sup}\left\{\lambda\in\mathbb{Q}\ \left|%
\aligned
&\text{the log pair}\ \left(X, \lambda D\right)\ \text{has log canonical singularities}\\
&\text{for every $G$-invariant effective $\mathbb{Q}$-divisor}\ D\equiv -K_{X}\\
\endaligned\right.\right\}.%
$$
\end{definition}

We put $\mathrm{lct}(X)=\mathrm{lct}(X,G)$ in the case when $G$ is
a trivial group.

\begin{example}
\label{example:Cheltsov-Park} Let $X$ be a smooth hypersurface in
$\mathbb{P}^{n}$ of degree $n$. Then $\mathrm{lct}(X)\geqslant
1-1/n$~by~\cite{Ch01b}.
\end{example}

\begin{example}
\label{example:Klein} The simple group
$\mathrm{PGL}(2,\mathbb{F}_{7})$ is a group of automorphisms of
the quartic curve
$$
x^{3}y+y^{3}z+z^{3}x=0\subseteq\mathbb{P}^{2}\cong\mathrm{Proj}\Big(\mathbb{C}[x,y,z]\Big),
$$
which induces an embedding
$\mathrm{PGL}(2,\mathbb{F}_{7})\subseteq\mathrm{Aut}(\mathbb{P}^{2})$.
Then $\mathrm{lct}(\mathbb{P}^{2},
\mathrm{PGL}(2,\mathbb{F}_{7}))=4/3$ by \cite{Ch07b}.
\end{example}

The number $\mathrm{lct}(X, G)$ plays an important role in
birational geometry (see Section~\ref{section:fibers}).

\begin{example}
\label{example:CPR} Let $X$ be a general quasismooth hypersurface
in $\mathbb{P}(1,a_{1},\ldots,a_{4})$ of
degree~$\sum_{i=1}^{4}a_{i}$ with terminal singularities such that
$-K_{X}^{3}\leqslant 1$. Then $\mathrm{lct}(X)=1$ by \cite{Ch07a},
which implies that
$$
\mathrm{Bir}\Big(\underbrace{X\times\cdots\times X}_{m\ \text{times}}\Big)=\Big<\prod_{i=1}^{m}\mathrm{Bir}\big(X\big),\ \mathrm{Aut}\Big(\underbrace{X\times\cdots\times X}_{m\ \text{times}}\Big)\Big>,%
$$%
and the variety $X\times\cdots\times X$ is non-rational (see
\cite{CPR}, \cite{Pu04d}, \cite{Ch07a}).
\end{example}

The number $\mathrm{lct}(X, G)$ plays an important role in
K\"ahler geometry.

\begin{example}
\label{example:KE} Suppose that $X$ has at most quotient
singularities, and the inequality
$$
\mathrm{lct}\big(X,
G\big)>\frac{\mathrm{dim}\big(X\big)}{\mathrm{dim}\big(X\big)+1}
$$
holds. Then $X$ has a K\"ahler--Einstein metric (see
\cite{DeKo01}).
\end{example}

Let $S$ be a del Pezzo surface with canonical singularities. Put
$\Sigma=\mathrm{Sing}(S)$.

\begin{remark}
\label{remark:KE-exists} It follows from \cite{Ti90},
\cite{DiTi92}, \cite{MaMu93}, \cite{Je97}, \cite{GhKo05},
\cite{Ch07b} that
\begin{itemize}
\item the surface $S$ has a K\"ahler--Einstein metric in the following cases:%
\begin{itemize}
\item when $\Sigma=\varnothing$, $S\not\cong\mathbb{F}_{1}$ and $K_{S}^{2}\ne 7$;%
\item when $S$ is a complete intersection
$$
\sum_{i=0}^{4}x_{i}^{2}=\sum_{i=0}^{4}\lambda_{i}x_{i}^{2}=0\subseteq\mathbb{P}^{4}\cong\mathrm{Proj}\Big(\mathbb{C}[x_{0},\ldots,x_{4}]\Big),
$$
and $\Sigma$ consists of points of type $\mathbb{A}_{1}$, where $\lambda_{i}\in\mathbb{C}$;%
\item when $K_{S}^{2}=2$, and $\Sigma$ consists of points of points of types  $\mathbb{A}_{1}$ and $\mathbb{A}_{2}$;%
\item when $K_{S}^{2}=1$, and $\Sigma$ consists of points of type $\mathbb{A}_{1}$;%
\end{itemize}
\item the surface $S$ does not have a K\"ahler--Einstein metric in the following~cases:%
\begin{itemize}
\item when $\Sigma=\varnothing$, and either $S\cong\mathbb{F}_{1}$ or $K_{S}^{2}=7$;%
\item when $\Sigma$ contains a point that is not of type  $\mathbb{A}_{1}$, and $K_{S}^{2}=4$;%
\item when $\Sigma$ contains a point that is not of type  $\mathbb{A}_{1}$ or $\mathbb{A}_{2}$, and $K_{S}^{2}=3$.%
\end{itemize}
\end{itemize}
\end{remark}

All possible values of $\mathrm{lct}(S)$ are found in \cite{Ch07b}
in the case when $\Sigma=\varnothing$.

\begin{example}
\label{example:smooth-del-Pezzo-surfaces} Suppose that $S$ is a
cubic surface in $\mathbb{P}^{3}$ and $\Sigma=\varnothing$. Then
$$
\mathrm{lct}\big(S\big)=\left\{%
\aligned
&2/3\ \text{when}\ S\ \text{has an Eckardt point},\\%
&3/4\ \text{when}\ S\ \text{does not have Eckardt points}.\\%
\endaligned\right.%
$$
\end{example}

We prove the following result in Sections~\ref{section:A1}.

\begin{theorem}
\label{theorem:main} Suppose that $S$ is a cubic surface in
$\mathbb{P}^{3}$ and $\Sigma\ne\varnothing$. Then
$$
\mathrm{lct}\big(S\big)=\left\{%
\aligned
&2/3\ \text{when}\ \Sigma=\big\{\mathbb{A}_{1}\big\},\\%
&1/3\ \text{when}\ \Sigma\supseteq\big\{\mathbb{A}_{4}\big\},\\%
&1/3\ \text{when}\ \Sigma=\big\{\mathbb{D}_{4}\big\},\\%
&1/3\ \text{when}\ \Sigma\supseteq\big\{\mathbb{A}_{2},\mathbb{A}_{2}\big\},\\%
&1/4\ \text{when}\ \Sigma\supseteq\big\{\mathbb{A}_{5}\big\},\\%
&1/4\ \text{when}\ \Sigma=\big\{\mathbb{D}_{5}\big\},\\
&1/6\ \text{when}\ \Sigma=\big\{\mathbb{E}_{6}\big\},\\%
&1/2\ \text{in other cases}.\\%
\endaligned\right.%
$$
\end{theorem}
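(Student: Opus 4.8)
The plan is to regard Theorem~\ref{theorem:main} as a finite list of cases indexed by the singularity type $\Sigma$, since a cubic surface with canonical singularities is determined up to projective equivalence by its $\mathrm{ADE}$ configuration, and to prove matching upper and lower bounds for $\mathrm{lct}(S)$ in each case. Throughout I use that $-K_S\sim\mathcal{O}_S(1)$, so $(-K_S)^2=3$ and $D\cdot(-K_S)=3$ for every effective $D\equiv -K_S$, and that du Val points are canonical, hence the minimal resolution $\pi\colon\widetilde S\to S$ is crepant: $K_{\widetilde S}=\pi^*K_S$, with exceptional $(-2)$-curves $E_1,\dots,E_k$ forming the Dynkin diagram of $\Sigma$. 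Writing $\pi^*D=\widetilde D+\sum_i m_iE_i$ with the $m_i\geqslant 0$ determined by $\pi^*D\cdot E_j=0$, the identity $K_{\widetilde S}+\lambda\widetilde D=\pi^*(K_S+\lambda D)-\lambda\sum_i m_iE_i$ shows that $(S,\lambda D)$ is log canonical at the singular point precisely when $\lambda m_i\leqslant 1$ for all $i$ and $(\widetilde S,\lambda\widetilde D+\sum_i\lambda m_iE_i)$ is log canonical along the $E_i$. This turns every local threshold computation into intersection theory on $\widetilde S$.

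For the upper bounds I would, in each case, exhibit an explicit effective $D\equiv -K_S$ supported on lines through the singular points and read off $\mathrm{lct}(S,D)$ from the resolution. The multiplicities $m_i$ are the entries of the inverse Cartan matrix of $\Sigma$ applied to $(\widetilde D\cdot E_j)_j$. In the mildest case $\Sigma=\{\mathbb A_1\}$ the value $2/3$ coincides with the smooth Eckardt value of Example~\ref{example:smooth-del-Pezzo-surfaces} and is produced by a tritangent plane. The generic value $1/2$ comes from a line $L$ through a singular point whose tangent plane section is $2L+\ell$: since $L$ then appears in $D$ with coefficient $2$, already the component $\tfrac12\cdot 2L=L$ is a non-log-canonical boundary, forcing $\mathrm{lct}(S)\leqslant 1/2$. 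For deep singularities the inverse Cartan entries are larger and amplify multiplicities: on the $\mathbb E_6$ cubic the unique line satisfies $-K_S\sim 3L$ and meets an end vertex of the $\mathbb E_6$ diagram so that the branch vertex acquires coefficient $2$ in $\pi^*L$, hence coefficient $6$ in $\pi^*(3L)$, giving $\mathrm{lct}(S)\leqslant 1/6$. The values $1/4$ for $\mathbb A_5,\mathbb D_5$ and $1/3$ for $\mathbb A_4,\mathbb D_4$ and $\{\mathbb A_2,\mathbb A_2\}$ are obtained identically from the relevant line configurations.

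For the lower bounds I would argue by contradiction: fix $D\equiv -K_S$ and the candidate $\lambda$, suppose $(S,\lambda D)$ is not log canonical, and let $Z$ be a minimal non-log-canonical centre. If $Z$ is a curve, then its coefficient in $\lambda D$ is at least $1$, so $D-\tfrac1\lambda Z$ is effective and $\bigl(D-\tfrac1\lambda Z\bigr)\cdot(-K_S)\geqslant 0$ forces $Z\cdot(-K_S)\leqslant 3\lambda$, leaving only lines, which are excluded by the same degree bound on the coefficient of a line in $D$. If $Z=\{p\}$ with $p$ smooth, then not being log canonical forces $\mathrm{mult}_pD>1/\lambda$; intersecting $D$ with a general anticanonical curve through $p$ gives $\mathrm{mult}_pD\leqslant 3$, and a sharper inversion-of-adjunction estimate against a line or conic through $p$ closes the remaining interval and yields $\lambda\geqslant 2/3$ at every smooth point. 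The decisive case is $Z=\{p\}$ at a singular point: here I pass to $\widetilde S$ and bound the coefficients $m_i$ of $\pi^*D$ using negative-definiteness of $(E_i\cdot E_j)$ together with $\widetilde D\cdot(-K_{\widetilde S})=3$ and the enumeration of lines through $p$, proving $\lambda m_i\leqslant 1$ for the claimed $\lambda$.

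The hard part will be precisely this last step: bounding how much multiplicity an arbitrary anticanonical $D$ can concentrate on the exceptional curves, since a priori $D$ may contain the lines through $p$ with large coefficient and thereby load the central nodes of the diagram. Controlling this requires a configuration-by-configuration analysis of the lines meeting each singular point and of the ways $\mathrm{Supp}(\widetilde D)$ can cross the chain $E_1,\dots,E_k$, combined with the global constraint $D\cdot(-K_S)=3$ to bound the coefficients of those lines. The deep singularities $\mathbb E_6,\mathbb D_5,\mathbb A_5$ are the most delicate, as there a single unit of multiplicity at $p$ already propagates to a large $m_i$; conversely, the uniform value $1/2$ in the remaining cases demands checking that none of the many mild configurations ($\mathbb A_2$, $\mathbb A_3$, several $\mathbb A_1$'s and their combinations) supports an anticanonical divisor worse than the section $2L+\ell$.
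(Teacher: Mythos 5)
Your plan coincides with the paper's own proof: the paper likewise argues case by case on $\Sigma$ (Lemmas~\ref{lemma:dP3-A1-single}--\ref{lemma:dP3-A2-A1-A1}), obtaining the upper bounds from explicit anticanonical divisors supported on lines through the singular points whose exceptional multiplicities are read off a crepant partial resolution, and the lower bounds by contradiction via connectedness of the locus of log canonical singularities, inversion of adjunction against lines and conics, and intersection-theoretic bounds on the coefficients $a_{i}$ of the exceptional $(-2)$-curves. The one caveat is your side remark that the $\mathrm{ADE}$ configuration determines the cubic up to projective equivalence --- false for the milder configurations, which occur in positive-dimensional families --- but neither your argument nor the paper's relies on this, and what remains of your proposal is exactly the configuration-by-configuration bookkeeping (driven by the Bruce--Wall line classification) that the paper carries out and you defer.
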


The group $\mathrm{S}_{4}$ naturally acts on the cubic surface
$\grave{S}\subset\mathbb{P}^{3}$ that is given by the equation
\begin{equation}
\label{equation:Cayley-cubic}
xyz+xyt+xzt+yzt=0\subseteq\mathbb{P}^{3}\cong\mathrm{Proj}\Big(\mathbb{C}[x,y,z,t]\Big),
\end{equation}
the group $\mathrm{S}_{3}\times\mathbb{Z}_{3}$ naturally acts on
the cubic surface $\acute{S}\subset\mathbb{P}^{3}$ that is given
by the equation
\begin{equation}
\label{equation:cubic-A2-A2-A2}
xyz=t^{3}\subseteq\mathbb{P}^{3}\cong\mathrm{Proj}\Big(\mathbb{C}[x,y,z,t]\Big),
\end{equation}
and
$\mathrm{lct}(\grave{S},\mathrm{S}_{4})=\mathrm{lct}(\acute{S},\mathrm{S}_{3}\times\mathbb{Z}_{3})=1$
(see Section~\ref{section:invariants}). But both surfaces
$\grave{S}$ and $\acute{S}$ are singular.

\begin{corollary}
\label{corollary:KE-singular-cubics} The surfaces~$\grave{S}$ and
$\acute{S}$ have K\"ahler--Einstein metrics.
\end{corollary}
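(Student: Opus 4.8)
The plan is to deduce this directly from the Kähler--Einstein criterion recorded in Example~\ref{example:KE}, combined with the equivariant threshold computation $\mathrm{lct}(\grave{S},\mathrm{S}_{4})=\mathrm{lct}(\acute{S},\mathrm{S}_{3}\times\mathbb{Z}_{3})=1$ asserted just above the statement. Since Example~\ref{example:KE} requires the surface to carry at most quotient singularities, the one genuine verification I would carry out is that both $\grave{S}$ and $\acute{S}$ satisfy this hypothesis; everything else is a numerical comparison.

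To that end I would identify the singular loci. The Cayley cubic $\grave{S}$ of~\eqref{equation:Cayley-cubic} has exactly four nodes (points of type $\mathbb{A}_{1}$), located at the four coordinate points, while the cubic $\acute{S}$ of~\eqref{equation:cubic-A2-A2-A2} has three points of type $\mathbb{A}_{2}$, sitting at the three coordinate points on the line $t=0$ (in the chart $x=1$ the equation reads $yz=t^{3}$, the local model of an $\mathbb{A}_{2}$ point). In both cases the singularities are canonical, hence du Val, and every du Val surface singularity is a quotient singularity, being analytically isomorphic to a quotient of $\mathbb{C}^{2}$ by a finite subgroup of $\mathrm{SL}(2,\mathbb{C})$. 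Therefore both surfaces have at most quotient singularities, as needed.

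With this in hand the conclusion is immediate. Taking $G=\mathrm{S}_{4}$ for $\grave{S}$ and $G=\mathrm{S}_{3}\times\mathbb{Z}_{3}$ for $\acute{S}$ — finite subgroups of the respective automorphism groups, as recorded before the statement — and noting that $\mathrm{dim}(S)=2$, we have
$$
\mathrm{lct}\big(\grave{S},\mathrm{S}_{4}\big)=\mathrm{lct}\big(\acute{S},\mathrm{S}_{3}\times\mathbb{Z}_{3}\big)=1>\frac{2}{3}=\frac{\mathrm{dim}(S)}{\mathrm{dim}(S)+1},
$$
so Example~\ref{example:KE} applies to each surface and produces the desired Kähler--Einstein metric. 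I do not expect a real obstacle here: the only substantive input is the threshold value $1$, which is established separately in Section~\ref{section:invariants}; granting it, the remaining work consists of the quotient-singularity check above and the comparison of $1$ with $2/3$, both of which are routine.
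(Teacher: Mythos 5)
Your proposal is correct and is exactly the argument the paper intends: the corollary follows by combining the equivariant thresholds $\mathrm{lct}(\grave{S},\mathrm{S}_{4})=\mathrm{lct}(\acute{S},\mathrm{S}_{3}\times\mathbb{Z}_{3})=1$ from Section~\ref{section:invariants} with the criterion of Example~\ref{example:KE}, the quotient-singularity hypothesis being satisfied because the du Val points ($4\times\mathbb{A}_{1}$ and $3\times\mathbb{A}_{2}$ respectively) are quotients of $\mathbb{C}^{2}$ by finite subgroups of $\mathrm{SL}(2,\mathbb{C})$. Your explicit verification of the singular loci is a welcome addition to what the paper leaves implicit.
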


It is very likely that the method in \cite{Ti90} can be applied to
prove the existence of a K\"ahler--Ein-\break stein metric on
every singular cubic surface having only singular points of
type~$\mathbb{A}_{1}$~and~$\mathbb{A}_{2}$.

\section{Basic tools.}
\label{section:tools}

Let $S$ be a surface with canonical singularities, and $D$ be an
effective $\mathbb{Q}$-divisor on it.

\begin{remark}
\label{remark:convexity} Let $B$ be an effective
$\mathbb{Q}$-divisor on $S$ such that $(S,B)$ is log canonical.
Then
$$
\left(S,\ \frac{1}{1-\alpha}\Big(D-\alpha B\Big)\right)
$$
is not log canonical if $(S, D)$ is not log canonical, where
$\alpha\in\mathbb{Q}$ such that $0\leqslant\alpha<1$.
\end{remark}

Let $\mathrm{LCS}(S,D)\subset S$ be a subset such that
$P\in\mathrm{LCS}(S,D)$ if and only if $(S,D)$ is not log~terminal
at the point $P$. The set $\mathrm{LCS}(S,D)$ is called the locus
of log canonical singularities.

\begin{remark}
\label{remark:connectedness} The set $\mathrm{LCS}(S,D)$ is
connected if $-(K_{S}+D)$ is ample (see Theorem~17.4 in
\cite{Ko91}).
\end{remark}

Let $P$ be a point of the surface $S$ such that $(S,D)$ is not log
canonical at the point $P$.

\begin{remark}
\label{remark:smooth-points} Suppose that $S$ is smooth at $P$.
Then $\mathrm{mult}_{P}(D)>1$.
\end{remark}

Let $C$ be an irreducible curve on the surface $S$. Put
$$
D=mC+\Omega,
$$
where $m\in\mathbb{Q}$ such that $m\geqslant 0$, and $\Omega$ is
an effective $\mathbb{Q}$-divisor such that
$C\not\subseteq\mathrm{Supp}(\Omega)$.

\begin{remark}
\label{remark:curves} Suppose that $C\subseteq\mathrm{LCS}(S,D)$.
Then $m\geqslant 1$.
\end{remark}

Suppose that $C$ is smooth at $P$, the inequality $m\leqslant 1$
holds and $P\in C$.

\begin{remark}
\label{remark:adjunction} Suppose that $S$ is smooth at $P$. Then
it follows from Theorem~17.6 in \cite{Ko91} that
$$
C\cdot\Omega\geqslant\mathrm{mult}_{P}\Big(\Omega\big\vert_{C}\Big)>1.
$$
\end{remark}

Let $\pi\colon \bar{S}\to S$ be a birational morphism, and
$\bar{D}$ is a proper transform of $D$ via $\pi$. Then
$$
K_{\bar{S}}+\bar{D}+\sum_{i=1}^{r}a_{i}E_{i}\equiv\pi^{*}\big(K_{S}+D\big),
$$
where $E_{i}$ is a $\pi$-exceptional curve, and $a_{i}$ is a
rational number.

\begin{remark}
\label{remark:log-pull-back} The log pair $(S,D)$ is log canonical
if and only if $(\bar{S},\bar{D}+\sum_{i=1}^{r}a_{i}E_{i})$ is log
canonical.
\end{remark}

Suppose that $r=1$, $\pi(E_{1})=P$, and $P$ is a singular point of
the surface $S$ of type $\mathbb{A}_{n}$.

\begin{remark}
\label{remark:A1} Suppose that  $n=1$, and $\bar{S}$ is smooth
along $E_{1}$. Then~$a_{1}>1/2$.
\end{remark}

Suppose that  $n>1$, and $E_{1}\cap\mathrm{Sing}(\bar{S})$
consists of one singular point of type $\mathbb{A}_{n-1}$.

\begin{remark}
\label{remark:An} It follows from Theorem~17.6 in \cite{Ko91} that
$a_{1}>1/(n+1)$.
\end{remark}

Most of the described results are valid in much more general
settings (see \cite{Ko91}).

\section{Main result.}
\label{section:A1}

Let us use the assumptions and notations of
Theorem~\ref{theorem:main}. Put
$$
\omega=\left\{%
\aligned
&2/3\ \text{when}\ \Sigma=\big\{\mathbb{A}_{1}\big\},\\%
&1/3\ \text{when}\ \Sigma\supseteq\big\{\mathbb{A}_{4}\big\},\\%
&1/3\ \text{when}\ \Sigma=\big\{\mathbb{D}_{4}\big\},\\%
&1/3\ \text{when}\ \Sigma\supseteq\big\{\mathbb{A}_{2},\mathbb{A}_{2}\big\},\\%
&1/4\ \text{when}\ \Sigma\supseteq\big\{\mathbb{A}_{5}\big\},\\%
&1/4\ \text{when}\ \Sigma=\big\{\mathbb{D}_{5}\big\},\\
&1/6\ \text{when}\ \Sigma=\big\{\mathbb{E}_{6}\big\},\\%
&1/2\ \text{in other cases}.\\%
\endaligned\right.%
$$

\begin{remark}
\label{remark:Joonyeong-Won} It follows from \cite{Wo06} that
$$
w=\mathrm{sup}\Big\{\mu\in\mathbb{Q}\ \big\vert\ \text{the log pair}\ \big(S,\mu D\big)\ \text{is log canonical for every}\ D\in\big|-K_{X}\big|\Big\}.%
$$
\end{remark}

Take $\lambda\in\mathbb{Q}$ such that $\lambda<\omega$. Let $D$ be
any effective $\mathbb{Q}$-divisor on $S$ such that~$D\equiv
-K_{S}$.

\begin{lemma}
\label{lemma:smooth-points-1-3} Suppose that $\lambda<1/3$. Then
$\mathrm{LCS}(S,\lambda D)\subseteq\Sigma$.
\end{lemma}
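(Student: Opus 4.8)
The plan is to show directly that $(S,\lambda D)$ is log terminal at every smooth point of $S$, so that the only possible points of $\mathrm{LCS}(S,\lambda D)$ lie in $\Sigma$. Fix a point $P\in S\setminus\Sigma$. Since $S$ is a cubic surface, $-K_S$ is the hyperplane class and $(-K_S)^2=3$; the idea is to bound $\mathrm{mult}_P(D)$ by intersecting $D$ with a suitable hyperplane section through $P$.

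First I would choose a general hyperplane section $Z\in|-K_S|$ passing through $P$. The hyperplanes through $P$ cut out a net on $S$ whose base locus is $\{P\}$, so by a Bertini argument a general such $Z$ is irreducible and reduced, is smooth at $P$ (hence $\mathrm{mult}_P(Z)=1$), and, since $D$ has only finitely many components and each of them is contained in at most one member of the net, has no common component with $D$. Then, because $P$ is a smooth point of $S$ and $Z$, $D$ share no component,
$$
3=(-K_S)^2=D\cdot Z\geq \mathrm{mult}_P(D)\,\mathrm{mult}_P(Z)=\mathrm{mult}_P(D),
$$
so $\mathrm{mult}_P(D)\leq 3$.

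Consequently $\mathrm{mult}_P(\lambda D)=\lambda\,\mathrm{mult}_P(D)\leq 3\lambda<1$, because $\lambda<1/3$. At a smooth surface point a boundary of multiplicity strictly less than $1$ yields a log terminal pair (equivalently $\mathrm{lct}_P(S,D)\geq 1/\mathrm{mult}_P(D)\geq 1/3>\lambda$, which is the log terminal refinement of Remark~\ref{remark:smooth-points}), so $(S,\lambda D)$ is log terminal at $P$ and $P\notin\mathrm{LCS}(S,\lambda D)$. As $P$ was an arbitrary smooth point, this gives $\mathrm{LCS}(S,\lambda D)\subseteq\Sigma$.

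The intersection estimate is routine; the points needing care are the Bertini step producing a hyperplane section through $P$ that is smooth there and shares no component with $D$, and the passage from $\mathrm{mult}_P(\lambda D)<1$ to log terminality at the smooth point $P$. I expect this last step, the strict-inequality upgrade from log canonical to log terminal at a smooth point, to be the only place where one must be slightly careful, although in dimension two it is standard.
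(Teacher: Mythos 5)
Your proof is correct and is essentially the paper's argument in contrapositive form: the paper assumes $(S,\lambda D)$ is not log terminal at a smooth point $P$, deduces $\mathrm{mult}_P(D)>1/\lambda>3$, and contradicts $3=-K_S\cdot D\geqslant\mathrm{mult}_P(D)$, which is exactly your hyperplane-section multiplicity bound combined with the standard criterion at smooth surface points. The extra care you take with the Bertini step and the strict-inequality version of Remark~\ref{remark:smooth-points} is sound but does not change the substance.
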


\begin{proof}
Suppose that $(S,\lambda D)$ is not log terminal at a smooth point
$P\in S$. Then
$$
3=-K_{S}\cdot D\geqslant\mathrm{mult}_{P}\big(D\big)>1/\lambda>3,%
$$
which is a contradiction.
\end{proof}

\begin{lemma}
\label{lemma:smooth-points-LCS} Suppose that
$|\mathrm{LCS}(S,\lambda D)|<+\infty$. Then
$\mathrm{LCS}(S,\lambda D)\subseteq\Sigma$.
\end{lemma}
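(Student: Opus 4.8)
The plan is to combine the connectedness of the locus of log canonical singularities with a multiplicity estimate at a smooth point, the latter coming from the tangent hyperplane section. I may assume $\mathrm{LCS}(S,\lambda D)\neq\varnothing$, since otherwise there is nothing to prove. Because $\lambda<\omega\leqslant 2/3<1$, we have $-(K_{S}+\lambda D)\equiv(1-\lambda)(-K_{S})$, which is ample as $-K_{S}$ is ample; hence Remark~\ref{remark:connectedness} shows that $\mathrm{LCS}(S,\lambda D)$ is connected. A finite connected set of closed points is a single point, so $\mathrm{LCS}(S,\lambda D)=\{P\}$. If $P\in\Sigma$ we are done, so I suppose that $P$ is a smooth point of $S$ and derive a contradiction.

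The numerical input is as follows. By Remark~\ref{remark:smooth-points} applied to $\lambda D$ we have $\mathrm{mult}_{P}(D)>1/\lambda$, and since $D\equiv-K_{S}$ we have $-K_{S}\cdot D=(-K_{S})^{2}=3$. The decisive curve is the tangent hyperplane section $T=T_{P}S\cap S$, which satisfies $T\equiv-K_{S}$ and, because the plane $T_{P}S$ is tangent to $S$ at the smooth point $P$, has $\mathrm{mult}_{P}(T)\geqslant 2$. In the principal case, where $T$ and $D$ have no common component, the intersection number splits into local contributions and
$$
3=T\cdot D\geqslant\mathrm{mult}_{P}(T)\,\mathrm{mult}_{P}(D)\geqslant 2\,\mathrm{mult}_{P}(D)>2/\lambda>3,
$$
using $\lambda<2/3$; this contradiction settles the principal case. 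When $T$ is an irreducible (nodal or cuspidal) cubic, I reduce to the principal case via Remark~\ref{remark:convexity} with $B=\lambda T$: the pair $(S,\lambda T)$ is log canonical for $\lambda<2/3$ (the cusp and node have log canonical threshold $5/6$ and $1$), so either $T\not\subseteq\mathrm{Supp}(D)$, or $D=mT+\Omega$ with $m<1$ and I subtract $mT$ to obtain an effective $D'\equiv-K_{S}$ with $(S,\lambda D')$ still not log canonical at $P$ and $T\not\subseteq\mathrm{Supp}(D')$; the case $m=1$ forces $D=T$, which is log canonical.

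The main obstacle is the case in which $T$ is reducible or non-reduced and some of its components occur in $D$. The components of $T$ are then lines and conics through $P$, and I would first record that any line $\ell\ni P$ with $\ell\not\subseteq\mathrm{Supp}(D)$ gives an immediate contradiction, since $1=\ell\cdot D\geqslant\mathrm{mult}_{P}(D)>1/\lambda>1$; thus every line through $P$ must be a component of $D$. The difficulty is that such a component $\ell$ may carry a coefficient $b$ with $1\leqslant b<1/\lambda$ (Remark~\ref{remark:curves} only forbids $\lambda b>1$ once $\mathrm{LCS}=\{P\}$), so Remark~\ref{remark:convexity} can no longer subtract it off, and the inversion of adjunction of Remark~\ref{remark:adjunction} applied to $\ell$ yields only $\lambda(1+b)>1$, which is not by itself contradictory. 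Resolving this requires a genuine case analysis of the degenerate tangent sections (two or three lines through $P$, including the Eckardt configuration, a line tangent to a conic, and a double line $T=2\ell+\ell'$) combined into multi-curve intersection and adjunction estimates, together with the observation that whenever $T$ passes through a non-$\mathbb{A}_{1}$ point of $\Sigma$ the value $\omega$, and hence $\lambda$, is correspondingly smaller, which tightens all of the inequalities. I expect the bulk of the work, and the most delicate bookkeeping, to lie in this enumeration of tangent-section types and of lines through $P$, organised by $\Sigma$.
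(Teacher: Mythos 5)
Your reduction to a single smooth point $P$ via connectedness, and your treatment of the case where the tangent hyperplane section $T=T_{P}S\cap S$ shares no component with $D$ (or is irreducible), are sound: the chain $3=T\cdot D\geqslant \mathrm{mult}_{P}(T)\,\mathrm{mult}_{P}(D)\geqslant 2/\lambda>3$ works for $\lambda<2/3$, and the convexity trick of Remark~\ref{remark:convexity} legitimately removes an irreducible $T$ from $\mathrm{Supp}(D)$ because $-K_{S}\cdot D=3$ forces its coefficient to be at most $1$. (One small imprecision: a point of $\mathrm{LCS}(S,\lambda D)$ is a point where the pair is not log \emph{terminal}, so Remark~\ref{remark:smooth-points} only yields $\mathrm{mult}_{P}(D)\geqslant 1/\lambda$ rather than a strict inequality; your outer strict inequalities absorb this, but the same care is needed when you invoke convexity and adjunction.)

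The difficulty is that what you call ``the main obstacle'' is not an obstacle you have overcome --- it is a description of work still to be done, and it is exactly where the content of the lemma sits. When $P$ lies on a line $\ell\subset S$, you correctly force $\ell\subseteq\mathrm{Supp}(D)$ and observe that adjunction along $\ell$ (Remark~\ref{remark:adjunction}) gives only $b>1/\lambda-1$ for its coefficient $b$, which in the critical case $\Sigma=\{\mathbb{A}_{1}\}$, $\lambda$ close to $2/3$, is merely $b>1/2$ and contradicts nothing; the upper bound $b\leqslant 1$ coming from a general hyperplane section through $\ell$ does not close the gap either. Finishing requires the genuine enumeration you defer: playing $\ell$ against the residual conic of the tangent section at $P$, and treating Eckardt points, tangent lines to conics, and lines through the singular locus separately --- essentially the computation done for smooth cubics in \cite{Ch07b}, which is presumably why the paper itself proves this lemma only by citing \cite{Ch01b} and \cite{Ch07b}. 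As written, your argument settles the statement for points $P$ not lying on any line of $S$ and would quickly settle the line case when $\lambda<1/2$, but the decisive case is asserted rather than proved, so the proposal is incomplete.
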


\begin{proof}
The necessary assertion follows from \cite{Ch01b} or \cite{Ch07b}.
\end{proof}

Let $O$ be the worst singular point of the surface $S$, and
$\alpha\colon\bar{S}\to S$ be a partial resolution of
sin\-gu\-la\-ri\-ties that contracts smooth rational curves
$E_{1},\ldots,E_{k}$ to the point $O$ such that
$$
\bar{S}\setminus\Big(\cup_{i=1}^{k}E_{i}\Big)\cong S\setminus O,%
$$
the surface $\bar{S}$ is smooth along $\cup_{i=1}^{k}E_{i}$, and
$E_{i}^{2}=-2$ for every $i=1,\ldots,k$. Then
$$
\bar{D}\equiv\alpha^{*}\big(D\big)-\sum_{i=1}^{k}a_{i}E_{i},
$$
where $\bar{D}$ is the proper transform of $D$ on the surface
$\bar{S}$, and $a_{i}\in\mathbb{Q}$. Let $L_{1},\ldots,L_{r}$ be
lines on the surface $S$ such that $O\in L_{i}$, and $\bar{L}_{i}$
be the~proper transform of $L_{i}$ on the surface $\bar{S}$.

\begin{lemma}
\label{lemma:dP3-A1-single} Suppose that
$\Sigma=\{\mathbb{A}_{1}\}$. Then $\mathrm{lct}(S)=2/3$.
\end{lemma}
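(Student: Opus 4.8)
The plan is to prove the two inequalities $\mathrm{lct}(S)\le 2/3$ and $\mathrm{lct}(S)\ge 2/3$ separately. For the upper bound I would invoke Remark~\ref{remark:Joonyeong-Won}: since every member of $|-K_S|$ is in particular an effective $\mathbb{Q}$-divisor numerically equivalent to $-K_S$, the defining supremum of $\mathrm{lct}(S)$ is taken over a larger family, so $\mathrm{lct}(S)$ is at most the anticanonical invariant of Remark~\ref{remark:Joonyeong-Won}, which equals $2/3$ by~\cite{Wo06}. Geometrically the value $2/3$ is realised by the hyperplane section $D=L+C$ cut out by the plane through $O$ that is tangent to the tangent cone of $S$ along the direction of a line $L\subset S$ passing through $O$: then $L$ and the residual conic $C$ pass through $O$ with the same tangent direction, their strict transforms $\bar{L}$ and $\bar{C}$ meet $E_1$ at one common point where $\bar{L}$, $\bar{C}$ and $E_1$ form an ordinary triple point, and blowing it up produces an exceptional divisor of log discrepancy $2-3\lambda$. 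Thus the real content of the lemma is the lower bound: for every $\lambda<2/3$ and every effective $D\equiv -K_S$ the pair $(S,\lambda D)$ is log canonical.

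So I assume $(S,\lambda D)$ is not log canonical and derive a contradiction. Since $-(K_S+\lambda D)\equiv(1-\lambda)(-K_S)$ is ample, the locus $\mathrm{LCS}(S,\lambda D)$ is connected by Remark~\ref{remark:connectedness}. First I would rule out an irreducible curve $Z$ in the non-log-canonical locus: such a $Z$ satisfies $\mathrm{mult}_Z(D)>1/\lambda>3/2$, so $3=(-K_S)\cdot D>\tfrac32(-K_S)\cdot Z$ forces $(-K_S)\cdot Z=1$, i.e. $Z$ is a line. Using Remark~\ref{remark:convexity} with a suitable log canonical divisor $B\equiv -K_S$ containing $Z$, I would lower the coefficient of $Z$ below $1/\lambda$, reducing to the case in which $\mathrm{LCS}(S,\lambda D)$ is finite. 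Then Lemma~\ref{lemma:smooth-points-LCS} gives $\mathrm{LCS}(S,\lambda D)\subseteq\Sigma=\{O\}$, and connectedness forces it to be exactly the point $O$. Everything therefore reduces to proving that $(S,\lambda D)$ is log canonical at the single $\mathbb{A}_1$-point $O$.

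For the local analysis I pass to the partial resolution $\alpha\colon\bar S\to S$ with its single $(-2)$-curve $E_1$, and write $\bar{D}\equiv\alpha^{*}(D)-a_1 E_1$ with $a_1=\mathrm{mult}_O(D)/2$. At most two of the six lines $L_1,\dots,L_6$ through $O$ can lie in $\mathrm{Supp}(D)$ (each would contribute at least $1$ to $(-K_S)\cdot D=3$), so I may pick $L_j\not\subseteq\mathrm{Supp}(D)$; then $1=D\cdot L_j\ge a_1$, whence $a_1\le 1$ and the coefficient $\lambda a_1$ of $E_1$ is $<1$. Consequently $(\bar S,\lambda\bar{D}+\lambda a_1 E_1)$ must fail to be log canonical at a point $p\in E_1$, and applying Remark~\ref{remark:adjunction} along $E_1$ yields $\lambda(\bar{D}\cdot E_1)=2\lambda a_1>1$, i.e. $a_1>1/(2\lambda)>3/4$. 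I then split according to $p$. If $p=\bar{L}_i\cap E_1$ for a line $L_i\not\subseteq\mathrm{Supp}(D)$, then $p\in\bar{D}$ forces $\bar{D}\cdot\bar{L}_i\ge 1$, whereas the computation $\alpha^{*}(L_i)=\bar{L}_i+\tfrac12 E_1$ gives $\bar{D}\cdot\bar{L}_i=1-a_1<1/4$, a contradiction. The remaining possibilities — that $p$ is the direction of a line $L_i\subseteq\mathrm{Supp}(D)$, or that $p$ is not the direction of any line on $S$ — I would treat by writing $D=mL_i+\Omega$ (respectively by using a nodal hyperplane section through $O$ with a branch along $p$) and repeating the intersection bookkeeping on $\bar S$, where the numbers $E_1^{2}=-2$, $\bar{L}_i\cdot E_1=1$ and $L_i^{2}=-1/2$ enter and again yield $\lambda\ge 2/3$.

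The main obstacle is precisely this final step: closing the gap $3/4<a_1\le 1$ when the offending point $p$ is either a line-direction whose line is already contained in $D$, or a non-line-direction at which $\bar{D}$ is highly tangent to $E_1$. There the crude bound $a_1\le 1$ is insufficient, and one must track the multiplicity of $D$ along the lines through $O$ together with the order of tangency of $\bar{D}$ to $E_1$, possibly after a further blow-up of $p$, to extract the sharp constant. Everything rests on the geometry recorded above: there are exactly six lines through $O$, no three of them coplanar (their directions lie on the smooth conic that is the projectivised tangent cone of the $\mathbb{A}_1$-point), and $\bar S$ is a smooth weak del Pezzo surface of degree $3$. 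These are the facts that pin the threshold to the value $2/3$ of the smooth Eckardt case rather than to anything smaller.
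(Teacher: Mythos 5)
Your reduction to the local analysis at the node $O$ follows the paper's route (rule out curves and smooth points in the non-log-canonical locus, pass to the minimal resolution, bound $a_{1}$, and locate a bad point $Q\in E_{1}$), and your upper-bound construction via the tangent hyperplane section $L+C$ is exactly the configuration the paper uses. But the proposal stops short of the proof precisely at the two cases you yourself flag as "the main obstacle", and these are where the actual content of the lemma lies; as written there is a genuine gap. Concretely: (i) when $Q$ is not the direction of any line through $O$, the paper does not do further intersection bookkeeping or a further blow-up at all --- it contracts the six curves $\bar{L}_{1},\ldots,\bar{L}_{6}$ by a morphism $\pi\colon\bar{S}\to\mathbb{P}^{2}$ (an isomorphism near $Q$), adds a general line $L$, and observes that $\mathrm{LCS}(\mathbb{P}^{2},\,L+\pi(\lambda\bar{D}+\lambda a_{1}E_{1}))$ would be disconnected, contradicting Remark~\ref{remark:connectedness}; (ii) when $Q=\bar{L}_{1}\cap E_{1}$, the paper writes $D=aL_{1}+\Upsilon$ and plays the line $L_{1}$ off against the residual conic $C_{1}$ with $L_{1}+C_{1}\sim-K_{S}$ and $\bar{C}_{1}\ni Q$ (the tangency that makes $(S,\tfrac23(L_{1}+C_{1}))$ strictly log canonical): adjunction along $\bar{L}_{1}$ forces $a>1/2$, while removing $C_{1}$ from $\mathrm{Supp}(D)$ by Remark~\ref{remark:convexity} and computing $\bar{C}_{1}\cdot\bar{\Upsilon}$ forces $a<1/2$. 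Neither of these ideas appears in your sketch, and without them the bounds $3/4<a_{1}\leqslant 1$ do not close.

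Two smaller inaccuracies: the assertion that at most two of the six lines through $O$ can lie in $\mathrm{Supp}(D)$ is unjustified (their coefficients need not be $\geqslant 1$), though you only need the weaker bound $a_{1}\leqslant 3/2$ coming from $\bar{H}\cdot\bar{D}\geqslant 0$ for $\bar{H}\in|-K_{\bar{S}}-E_{1}|$, which is what the paper uses. And the plan to eliminate a line $Z$ from $\mathrm{LCS}(S,\lambda D)$ by "lowering its coefficient via convexity" does not obviously work (the auxiliary divisor $B$ need not be subtractable from $D$); the paper instead gets a direct contradiction from $2=C\cdot D\geqslant\mu\,C\cdot Z\geqslant\tfrac32\mu>\tfrac94$ for the residual conic $C$.
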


\begin{proof}
There is a conic $C_{i}\subset S$ such that the singularities of
the log pair $(S,\frac{2}{3}(L_{i}+C_{i}))$ are log canonical and
not log terminal. So, we may assume that $(S,\lambda D)$ is not
log canonical.

Suppose that there is an irreducible curve $Z\subset S$ such that
$D=\mu Z+\Omega$, where $\mu$ is a rational number such that
$\mu\geqslant 1/\lambda$, and $\Omega$ is an~effective
$\mathbb{Q}$-divisor such that $Z\not\subset\mathrm{Supp}(S)$.
Then
$$
3=-K_{S}\cdot
D=\mu\mathrm{deg}\big(Z\big)-K_{S}\cdot\Omega\geqslant\mu\mathrm{deg}\big(Z\big)>3\mathrm{deg}\big(Z\big)/2,
$$
which implies that $Z$ is a line. Let $C$ be a general conic on
$S$ such that $-K_{S}\sim Z+C$. Then
$$
2=C\cdot D=\mu C\cdot Z+C\cdot\Omega\geqslant \mu C\cdot Z\geqslant\frac{3}{2}\mu,%
$$
which is a contradiction. Then $\mathrm{LCS}(S,\lambda D)=O$ by
Lemma~\ref{lemma:smooth-points-LCS}. We have
$3-2a_{1}=\bar{H}\cdot\bar{D}\geqslant 0$, where $\bar{H}$ is a
general curve in $|-K_{\bar{S}}-E_{1}|$. Thus, it follows from
the~equivalence
$$
K_{\bar{S}}+\lambda\bar{D}+\lambda a_{1} E_{1}\equiv\alpha^{*}\big(K_{S}+\lambda D\big)%
$$
that there is a point $Q\in E_{1}$ such that
$(\bar{S},\lambda\bar{D}+\lambda a_{1} E_{1})$ is not log
canonical at the point $Q$.

Suppose that $Q\not\in\cup_{i=1}^{6}\bar{L}_{i}$. Let $\pi\colon
\bar{S}\to\mathbb{P}^{2}$ be a contraction of the curves
$\bar{L}_{1},\ldots,\bar{L}_{6}$. Then
$$
\pi\big(\bar{D}+a_{1} E_{1}\big)\equiv \pi\big(-K_{\bar{S}}\big)\equiv -K_{\mathbb{P}^{2}},%
$$
and $\pi$ is an isomorphism in a neighborhood of $Q$. Let $L$ be a
general line on $\mathbb{P}^{2}$. Then the locus
$$
\mathrm{LCS}\Big(\mathbb{P}^{2},\ L+\pi\big(\lambda\bar{D}+\lambda a_{1} E_{1}\big)\Big)%
$$
is not not connected, which is impossible by
Remark~\ref{remark:connectedness}.

Therefore, we may assume that $Q\in\bar{L}_{1}$. Put
$D=aL_{1}+\Upsilon$, where $a$ is a non-negative rational number,
and $\Upsilon$ is an~effective $\mathbb{Q}$-divisor, whose support
does not contain the line $L_{1}$. Then
$$
\bar{\Upsilon}\equiv\alpha^{*}\big(\Upsilon\big)-\epsilon E_{1},
$$
where $\epsilon=a_{1}-a/2$, and $\bar{\Upsilon}$ is the proper
transforms of the divisor $\Upsilon$ on the~surface $\bar{S}$.

The log pair $(\bar{S},\lambda
a\bar{L}_{1}+\lambda\bar{\Upsilon}+\lambda(a/2+\epsilon)E_{1})$ is
not log canonical at $Q$. Then
$$
1+a/2-\epsilon=\bar{L}_{1}\cdot\bar{\Upsilon}>1/\lambda-a/2-\epsilon
$$
by Remark~\ref{remark:adjunction}, because $\lambda a\leqslant 1$.
Hence, we have $a>1/2$.

We may assume that  $\mathrm{Supp}(D)$ does not contain the conic
$C_{1}$ due to Remark~\ref{remark:convexity}. Then
$$
2-3a/2-\epsilon=\bar{C}_{1}\cdot\bar{\Upsilon}\geqslant\mathrm{mult}_{Q}\big(\bar{\Upsilon}\big)>1/\lambda-a/2-\epsilon,
$$
where $\bar{C}_{1}$ be the proper transforms of $C_{1}$ on
the~surface $\bar{S}$. Hence, we see that $a<1/2$.
\end{proof}

\begin{lemma}
\label{lemma:dP3-A1-many} Suppose that
$\Sigma=\{\mathbb{A}_{1},\ldots,\mathbb{A}_{1}\}$ and
$|\Sigma|\geqslant 2$. Then $\mathrm{lct}(S)=1/2$.
\end{lemma}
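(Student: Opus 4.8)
The plan is to prove the two inequalities $\mathrm{lct}(S)\leqslant 1/2$ and $\mathrm{lct}(S)\geqslant 1/2$ separately, with the lower bound following the template of the proof of Lemma~\ref{lemma:dP3-A1-single}. For the upper bound I would exploit the extra node. Since $|\Sigma|\geqslant 2$, pick two points $O_{1},O_{2}\in\Sigma$; the line $L\subset\mathbb{P}^{3}$ through $O_{1}$ and $O_{2}$ meets $S$ with multiplicity at least $4>3$, so $L\subset S$. On the minimal resolution $\tilde{S}\to S$ the curve $\bar{L}$ is a $(-1)$-curve meeting each of the two $(-2)$-curves $E_{1},E_{2}$ over $O_{1},O_{2}$ transversally once, and a Riemann--Roch computation for $M=-K_{\tilde{S}}-2\bar{L}-E_{1}-E_{2}$ gives $\chi(M)=1$ with $h^{2}(M)=0$, so $|-K_{S}-2L|\neq\varnothing$. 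Choosing $M_{0}\in|-K_{S}-2L|$ and setting $D=2L+M_{0}\equiv -K_{S}$, at a general point of $L$ the divisor $\lambda D$ has coefficient $2\lambda$ along the smooth curve $L$, so $(S,\lambda D)$ is not log canonical for $\lambda>1/2$; hence $\mathrm{lct}(S)\leqslant\mathrm{lct}(S,D)\leqslant 1/2$.

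For the lower bound I fix $\lambda<1/2$ and an effective $D\equiv -K_{S}$, and suppose $(S,\lambda D)$ is not log canonical. Since $-(K_{S}+\lambda D)\equiv(1-\lambda)(-K_{S})$ is ample, $\mathrm{LCS}(S,\lambda D)$ is nonempty and connected by Remark~\ref{remark:connectedness}. First I rule out one-dimensional components: if an irreducible curve $Z$ lies in the locus, then writing $D=\mu Z+\Omega$ with $Z\not\subset\mathrm{Supp}(\Omega)$ forces $\mu\geqslant 1/\lambda>2$, and $3=-K_{S}\cdot D\geqslant\mu\,\mathrm{deg}(Z)>2\,\mathrm{deg}(Z)$ shows that $Z$ is a line. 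Intersecting with a general conic $C$ satisfying $-K_{S}\sim Z+C$ gives $2=C\cdot D\geqslant\mu(C\cdot Z)$, and since $C\cdot Z=2-j/2\geqslant 1$ for a line through $j\leqslant 2$ nodes, this yields $2>2$, a contradiction. Therefore $\mathrm{LCS}(S,\lambda D)$ is finite, so it is contained in $\Sigma$ by Lemma~\ref{lemma:smooth-points-LCS}, and by connectedness it is a single node $O$.

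With $\mathrm{LCS}(S,\lambda D)=\{O\}$ I pass to the partial resolution $\alpha\colon\bar{S}\to S$ contracting the single $(-2)$-curve $E_{1}$ over $O$. From $3-2a_{1}=\bar{H}\cdot\bar{D}\geqslant 0$, with $\bar{H}$ general in $|-K_{\bar{S}}-E_{1}|$, I obtain $a_{1}\leqslant 3/2$, while Remark~\ref{remark:A1} gives $a_{1}>1/2$; thus $\lambda a_{1}<1$, and the equivalence $K_{\bar{S}}+\lambda\bar{D}+\lambda a_{1}E_{1}\equiv\alpha^{*}(K_{S}+\lambda D)$ shows that $(\bar{S},\lambda\bar{D}+\lambda a_{1}E_{1})$ is not log canonical at some point $Q\in E_{1}$. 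I then split into two cases exactly as in Lemma~\ref{lemma:dP3-A1-single}: if $Q$ lies on no $\bar{L}_{i}$, a contraction of the lines through $O$ to $\mathbb{P}^{2}$ is an isomorphism near $Q$ and produces a disconnected locus of log canonical singularities, contradicting Remark~\ref{remark:connectedness}; if $Q\in\bar{L}_{1}$, I write $D=aL_{1}+\Upsilon$ and apply Remark~\ref{remark:adjunction} on $\bar{L}_{1}$ and on the residual conic $\bar{C}_{1}$ to derive two incompatible bounds on $a$.

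The step I expect to be the main obstacle is this endgame analysis at $Q$, since the configuration of lines through the node $O$ — and hence which curves one contracts to reach $\mathbb{P}^{2}$, together with the way $\bar{L}_{1}$ and the residual conic meet $E_{1}$ — changes with the number of nodes; one must therefore check the cases $|\Sigma|=2,3,4$ (the last being the Cayley cubic $\grave{S}$) to confirm that the connectedness and adjunction estimates close up in each. Everything else is a direct transcription of the single-node argument with the constant $2/3$ replaced by $1/2$.
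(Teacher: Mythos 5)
Your upper bound and the first half of your lower bound match the paper: the line $L_{1}$ through two nodes gives $2L_{1}+L'\sim -K_{S}$, the curve case is excluded by intersecting with a residual conic, and connectedness plus Lemma~\ref{lemma:smooth-points-LCS} reduce everything to a point $Q\in E_{1}$ over a single node $O$. The gap is in the endgame, which you explicitly leave as ``check the cases $|\Sigma|=2,3,4$'': the two-pronged finish you propose is a transcription of Lemma~\ref{lemma:dP3-A1-single}, and neither prong survives the presence of a second node. The contraction $\pi\colon\bar{S}\to\mathbb{P}^{2}$ of the six lines through the node relies on $\bar{S}$ being smooth and on those lines being six disjoint $(-1)$-curves; when $|\Sigma|\geqslant 2$ the partial resolution $\bar{S}$ is still singular at the other nodes, the line $L_{1}$ through $O$ and $P$ has $\bar{L}_{1}^{2}=-1/2$, and there are too few lines through $O$ to reach $\mathbb{P}^{2}$, so the disconnectedness argument is not available in the stated form. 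Likewise the case $Q\in\bar{L}_{1}$ does not end with ``two incompatible bounds on $a$'' as in the single-node proof.

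What the paper actually does in this regime is different and is the real content of the lemma. First, $Q\in\bar{L}_{1}$ is ruled out outright: adjunction on $\bar{L}_{1}$ (using $\bar{L}_{1}^{2}=-1/2$, i.e.\ precisely the fact that $L_{1}$ passes through the second node) gives $a>1/\lambda$, contradicting the earlier exclusion of high-multiplicity components. Then, for $Q\notin\bar{L}_{1}$, one takes the unique reduced conic $Z\subset S$ with $O\in Z\ni P$ whose proper transform passes through $Q$; if $Z$ is irreducible, adjunction on $\bar{Z}$ (with $\bar{Z}^{2}=1/2$) forces a negative coefficient, and if $Z=L_{2}+L_{2}'$ is reducible, adjunction on $\bar{L}_{2}$ gives $c>1$, after which the tangent hyperplane section $T=C_{2}+L_{2}$ with $\bar{C}_{2}\cap\bar{L}_{2}=Q$ gives $c<0$. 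You would need to supply this (or an equivalent) argument for the case $Q$ not on the line through the two nodes; as written, your proposal does not close that case.
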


\begin{proof}
Let $P$ be a point in $\Sigma$ such that $P\ne O$. We may assume
that $P\in L_{1}$. Then
$$
2L_{1}+L^{\prime}\sim -K_{S}
$$
for some line $L^{\prime}\subset S$. Hence, we may assume that
$(S,\lambda D)$ is not log canonical.

Suppose that there is an irreducible curve $Z$ on the surface $S$
such that
$$
D=\mu Z+\Omega,
$$
where $\mu$ is a rational number such that $\mu\geqslant
1/\lambda$, and $\Omega$ is an~effective $\mathbb{Q}$-divisor,
whose support does not contain the curve $Z$. Then $Z$ is a line
(see the proof of Lemma~\ref{lemma:dP3-A1-single}). We have
$$
2=C\cdot D=\mu C\cdot Z+C\cdot\Omega\geqslant \mu C\cdot Z\geqslant\mu\geqslant 1/\lambda>2,%
$$
where $C$ is a general conic on $S$ that intersects $Z$ in two
points.

We see that $\mathrm{LCS}(S,\lambda D)=O$ and $a_{1}>1$ (see
Lemma~\ref{lemma:smooth-points-LCS}~and~Remarks~\ref{remark:connectedness}~and~\ref{remark:A1}).

Arguing as in the proof of Lemma~\ref{lemma:dP3-A1-single}, we see
that there is a point $Q\in E$ such that the~singularities of
the~log pair $(\bar{S},\lambda\bar{D}+\lambda a_{1} E_{1})$ are
not log~canonical at the point $Q$.

Suppose that $Q\in\bar{L}_{1}$. Let $a$ be a non-negative rational
number such that
$$
D=aL_{1}+\Upsilon,
$$
where $\Upsilon$ is an~effective $\mathbb{Q}$-divisor, whose
support does not contain the line $L_{1}$. Then
$$
\bar{\Upsilon}\equiv\alpha^{*}\big(\Upsilon\big)-\epsilon E_{1},
$$
where $\bar{\Upsilon}$ is the proper transforms of $\Upsilon$ on
the~surface $\bar{S}$, and $\epsilon=a_{1}-a/2$. The log pair
$$
\Big(\bar{S},\ \lambda
a\bar{L}_{1}+\lambda\bar{\Upsilon}+\lambda(a/2+\epsilon)E_{1}\Big)
$$
is not log canonical at the point $Q$. We have
$\bar{L}_{1}^{2}=-1/2$. Then
$$
1-\epsilon=\bar{L}_{1}\cdot\bar{\Upsilon}>1/\lambda-a/2-\epsilon
$$
by Remark~\ref{remark:adjunction}. We have $a>1/\lambda$, which is
impossible. Hence, we see that $Q\not\in\bar{L}_{1}$.

There is a unique reduced conic $Z\subset S$ such that $O\in Z\ni
P$ and $Q\in\bar{Z}$, where $\bar{Z}$ is~the~proper transform of
the conic $Z$ on the surface $\bar{S}$. Then
$L_{1}\not\subseteq\mathrm{Supp}(Z)$, because
$Q\not\in\bar{L}_{1}$.

Suppose that $Z$ is irreducible. Put $D=eZ+\Delta$, where $e$ is a
non-negative rational number, and $\Delta$ is an~effective
$\mathbb{Q}$-divisor, whose support does not contain the conic
$C$. Then
$$
\bar{\Delta}\equiv\alpha^{*}\big(\Delta\big)-\delta E_{1},
$$
where $\bar{\Delta}$ is the proper transforms of $\Delta$ on
the~surface $\bar{S}$, and $\delta=a_{1}-e/2$. Then
$$
2-e-\delta=\bar{Z}\cdot\bar{\Delta}>1/\lambda-e/2-\delta>2-e/2-\delta
$$
by Remark~\ref{remark:adjunction}, because $\bar{C}^{2}=1/2$. We
have $e<0$, which is impossible.

We see that the conic $Z$ is reducible. Then
$$
Z=L_{2}+L_{2}^{\prime},
$$
where $L_{2}^{\prime}$ is a line on $S$ such that $P\in
L_{2}^{\prime}$ and $L_{2}\cap L_{2}^{\prime}\ne\varnothing$.

The intersection $L_{2}\cap L_{2}^{\prime}$ consists of a single
point. The impossibility of the case $Q\in\bar{L}_{1}$~implies
that the surface $S$ is smooth at the point $L_{2}\cap
L_{2}^{\prime}$. There is a rational number $c\geqslant 0$ such
that
$$
D=cL_{2}+\Xi,
$$
where $\Xi$ is an~effective $\mathbb{Q}$-divisor, whose support
does not contain the line $L_{2}$. Then
$$
\bar{\Xi}\equiv\alpha^{*}\big(\Xi\big)-\upsilon E_{1},
$$
where $\bar{\Xi}$ is the proper transforms of  $\Xi$ on
the~surface $\bar{S}$, and $\upsilon=a_{1}-c/2$. The log pair
$$
\Big(\bar{S},\ \lambda c\bar{L}_{2}+\lambda\bar{\Xi}+\lambda(c/2+\upsilon)E_{1}\Big)%
$$
is not log canonical at $Q$. We have $Q\in\bar{L}_{2}$ and
$\bar{L}_{2}^{2}=-1$. Then
$$
1+c/2-\upsilon=\bar{L}_{2}\cdot\bar{\Xi}>1/\lambda-c/2-\upsilon>2-c/2-\upsilon
$$
by Remark~\ref{remark:adjunction}. Therefore, the inequality $c>1$
holds.

There is a unique hyperplane section $T$ of the surface $S$ such
that $T=C_{2}+L_{2}$ and
$$
Q=\bar{C}_{2}\cap \bar{L}_{2}=O,
$$
where $C_{2}$ is a conic, and $\bar{C}_{2}$ is the proper
transforms of $C_{2}$ on the~surface $\bar{S}$.

The conic $C_{2}$ is irreducible. We may assume that
$C_{2}\not\subseteq\mathrm{Supp}(D)$ (see
Remark~\ref{remark:convexity}). Then
$$
2-3c/2-\upsilon=\bar{C}_{2}\cdot\bar{\Xi}\geqslant\mathrm{mult}_{Q}\big(\bar{\Xi}\big)>1/\lambda-c/2-\upsilon,
$$
which implies that $c<0$. The obtained contradiction completes the
proof.
\end{proof}

\begin{lemma}
\label{lemma:dP3-D4} Suppose that $\Sigma=\{\mathbb{D}_{4}\}$.
Then $\mathrm{lct}(S)=1/3$.
\end{lemma}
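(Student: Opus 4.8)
The plan is to prove the two bounds $\mathrm{lct}(S)\leqslant 1/3$ and $\mathrm{lct}(S)\geqslant 1/3$ separately, following the pattern of Lemmas~\ref{lemma:dP3-A1-single} and~\ref{lemma:dP3-A1-many}. For the upper bound I would exploit the tangent plane $\Pi$ to $S$ at the $\mathbb{D}_{4}$ point $O$. Since the tangent cone of a $\mathbb{D}_{4}$ singularity is a double plane, the section $\Pi\cap S$ is the reduced union $L_{1}+L_{2}+L_{3}$ of the three lines of $S$ through $O$, and $L_{1}+L_{2}+L_{3}\equiv -K_{S}$. On the minimal resolution $\alpha\colon\bar{S}\to S$, whose exceptional locus is the $\mathbb{D}_{4}$ configuration with central $(-2)$-curve $E_{0}$ and three outer $(-2)$-curves $E_{1},E_{2},E_{3}$, each proper transform $\bar{L}_{i}$ meets exactly one outer curve $E_{i}$ transversally. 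Solving $\alpha^{*}(L_{1}+L_{2}+L_{3})\cdot E_{j}=0$ with the inverse of the $\mathbb{D}_{4}$ Cartan matrix then assigns multiplicity $3$ to $E_{0}$, so $(S,\lambda(L_{1}+L_{2}+L_{3}))$ is not log canonical for $\lambda>1/3$, whence $\mathrm{lct}(S)\leqslant 1/3$.

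For the lower bound I fix $\lambda<1/3$ and an effective $D\equiv -K_{S}$, and suppose that $(S,\lambda D)$ is not log canonical. By Lemma~\ref{lemma:smooth-points-1-3} its non-log-terminal locus lies in $\Sigma=\{O\}$, hence equals $\{O\}$. Writing $D=b_{1}L_{1}+b_{2}L_{2}+b_{3}L_{3}+\Omega$ with $\Omega$ effective and containing no $L_{i}$, the relations $D\cdot L_{i}=1$ combined with $L_{i}^{2}=0$ and $L_{i}\cdot L_{j}=1/2$ force $b_{1}+b_{2}+b_{3}\leqslant 3$. Passing to $\bar{S}$ and writing $\alpha^{*}D=\bar{D}+\sum_{i}a_{i}E_{i}$, the same inverse-Cartan computation, fed by the inequalities $\bar{\Omega}\cdot\bar{L}_{i}\geqslant 0$, yields the uniform bound $a_{0}\leqslant 3$ for the central coefficient, which dominates all the $a_{i}$. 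Since $\lambda<1/3$, this gives $\lambda a_{i}<1$ for every $i$, so no exceptional curve alone destroys log canonicity.

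Consequently the pair $(\bar{S},\lambda\bar{D}+\sum_{i}\lambda a_{i}E_{i})$ must fail to be log canonical at an isolated point $Q$ over $O$. Arguing as in Lemma~\ref{lemma:dP3-A1-many} — first ruling out that $Q$ is a general point of $E_{0}$ by contracting the outer configuration and invoking the connectedness of $\mathrm{LCS}$ (Remark~\ref{remark:connectedness}) — I would reduce to the case where $Q$ lies on the proper transform of one of the lines $L_{i}$ through $O$ (or on that of the residual line $M_{i}$, where $-K_{S}\sim 2L_{i}+M_{i}$). Applying the inversion-of-adjunction inequality of Remark~\ref{remark:adjunction} to the relevant smooth curve at $Q$ converts non-log-canonicity into a lower bound on the coefficient $b_{i}$; intersecting against the complementary conic $C_{i}$ with $-K_{S}\sim L_{i}+C_{i}$, and using Remark~\ref{remark:convexity} to assume $C_{i}\not\subseteq\mathrm{Supp}(D)$, then yields the opposite, incompatible bound, completing the contradiction.

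The main obstacle is the case analysis at $Q$. Because the $\mathbb{D}_{4}$ fibre is a star of four $(-2)$-curves rather than a single node, the point $Q$ can occupy several geometrically distinct positions — on the central curve, at a node $E_{0}\cap E_{i}$ of the dual graph, or on an outer curve — and in each it may or may not meet one of the three lines through $O$ or its residual line. The central coefficient can be arbitrarily close to $1/\lambda$, so the adjunction inequalities are tight and must be combined very carefully with the intersection relations $L_{i}^{2}=0$, $L_{i}\cdot L_{j}=1/2$, $L_{i}\cdot M_{i}=1$ and $\bar{L}_{i}\cdot E_{i}=1$ in order to close every branch. Keeping this bookkeeping consistent, rather than any single estimate, is where the real difficulty lies.
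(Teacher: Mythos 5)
Your upper bound is correct and agrees with the paper: the tangent plane cuts out $L_{1}+L_{2}+L_{3}\sim -K_{S}$, and the inverse-Cartan computation gives multiplicity $3$ along the central exceptional curve, so $\mathrm{lct}(S)\leqslant 1/3$. Your setup for the lower bound on the full minimal resolution is also sound: with $E_{0}$ central and $E_{1},E_{2},E_{3}$ outer, the inequalities $E_{i}\cdot\bar{D}\geqslant 0$ give $2a_{i}\geqslant a_{0}$ and $2a_{0}\geqslant a_{1}+a_{2}+a_{3}$, and a general hyperplane section through $O$ (whose pullback involves the fundamental cycle $2E_{0}+E_{1}+E_{2}+E_{3}$) gives $a_{0}\leqslant 3$, hence $a_{i}\leqslant a_{0}\leqslant 3$ for all $i$; by Lemma~\ref{lemma:smooth-points-1-3} the non-log-canonical locus is a point $Q$ on $\cup E_{i}$. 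Note, however, that $a_{0}\leqslant 3$ comes from the hyperplane section, not from $\bar{\Omega}\cdot\bar{L}_{i}\geqslant 0$ as you assert, and the bound $b_{1}+b_{2}+b_{3}\leqslant 3$ plays no role afterwards.

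The genuine gap is that the decisive step --- excluding every position of $Q$ --- is only gestured at, and the route you sketch for it is the wrong one. You propose to rule out a general point of $E_{0}$ by a contraction-plus-connectedness argument and then to reduce to the case $Q\in\bar{L}_{i}$, closing with the residual conic $C_{i}$ as in Lemma~\ref{lemma:dP3-A1-single}; but you never verify that this closes all branches, and you concede that the bookkeeping is the real difficulty. In fact the lines are irrelevant here: Remark~\ref{remark:adjunction} applied to $E_{0}$ at a point $Q\in E_{0}\setminus\cup_{i\geqslant1}E_{i}$ gives $2a_{0}-a_{1}-a_{2}-a_{3}>1/\lambda>3$, contradicting $2a_{0}-a_{1}-a_{2}-a_{3}\leqslant a_{0}/2\leqslant 3/2$; at $Q=E_{0}\cap E_{1}$ it gives $2a_{0}-a_{2}-a_{3}>3$, contradicting $2a_{0}-a_{2}-a_{3}\leqslant a_{0}\leqslant 3$; and applied to $E_{1}$ at any $Q\in E_{1}\setminus E_{0}$ it gives $2a_{1}-a_{0}>3$, contradicting $2a_{1}-a_{0}\leqslant a_{0}\leqslant 3$. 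So your framework does yield a proof, but only via an argument you did not supply and different from the one you outlined. For comparison, the paper itself takes yet another route: it extracts only the central curve, obtaining a surface $\breve{S}$ with three $\mathbb{A}_{1}$ points on the exceptional curve $E$ (with $E^{2}=-1/2$), shows $Q$ must be one of these $\mathbb{A}_{1}$ points, blows it up once more, and only then runs the line-plus-adjunction analysis on the new exceptional curve $G$; that version genuinely does need the line $L_{1}$ and the assumption $L_{3}\not\subseteq\mathrm{Supp}(D)$, which is presumably what misled your plan.
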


\begin{proof}
We have $r=3$, and $L_{1}$, $L_{2}$, $L_{3}$ lie in a single
plane. Then
$$
\left(S,\ \frac{1}{3}\Big(L_{1}+L_{2}+L_{3}\Big)\right)
$$
is log canonical and not log terminal. We may assume that
$L_{3}\not\subseteq\mathrm{Supp}(D)$ due to
Remark~\ref{remark:convexity}.

Let $\beta\colon\breve{S}\to S$ be a birational morphism such that
the morphism $\alpha$ contracts one irreducible rational curve $E$
that contains three singular points $O_{1}$, $O_{2}$, $O_{3}$ of
type $\mathbb{A}_{1}$.

Let $\breve{D}$ and $\breve{L}_{i}$ be proper transforms of $D$
and $L_{i}$ on the~surface $\breve{S}$, respectively. Then
$$
\breve{L}_{i}\equiv\beta^{*}(L_{i})-E,\
\breve{D}\equiv\beta^{*}\big(D\big)-\mu E,
$$
where $\mu$ is a rational number. We have
$$
0\leqslant\breve{D}\cdot\breve{L}_{3}=\Big(\beta^{*}\big(D\big)-\mu E\Big)\cdot\breve{L}_{3}=1-\mu E\cdot\breve{L}_{3}=1-\mu/2,%
$$
which implies that $\mu\leqslant 2$. Therefore, we may assume
there is a point $Q\in E$ such that the~singularities of the~log
pair $(\breve{S},\lambda\breve{D}+\breve\mu E)$ are not log
canonical at the~point $Q$ (see
Lemma~\ref{lemma:smooth-points-1-3}).

Suppose that $\breve{S}$ is smooth at $Q$. The log pair
$(\breve{S},\lambda\breve{D}+E)$ is not log canonical at $Q$. Then
$$
1\geqslant\mu/2=-\mu E^{2}=E\cdot\breve{D}>1/\lambda>3%
$$
by Remark~\ref{remark:adjunction}. We see that $Q=O_{j}$ for some
$j$.

The curves $\breve{L}_{1}$, $\breve{L}_{2}$ and $\breve{L}_{3}$
are disjoined, and each of them passes through a singular point of
the~surface $\breve{S}$. Therefore, we may assume that
$O_{i}\in\breve{L}_{i}$ for every $i$.

Let $\gamma\colon\grave{S}\to\breve{S}$ be a blow up of the point
$O_{j}$, and $G$ be the~exceptional curve of $\gamma$. Then
$$
\grave{L}_{j}\equiv\gamma^{*}\big(\breve{L}_{j}\big)-\frac{1}{2}G\equiv\big(\beta\circ\gamma\big)^{*}\big(L_{1}\big)-\grave{E}-G,%
$$
where $\grave{L}_{j}$ and $\grave{E}$ are proper transforms of
the~curves $\bar{L}_{j}$ and $E$ on the~surface $\grave{S}$,
respectively.

Let $\grave{D}$ be the~proper transform of the~divisor $\breve{D}$
on the~surface $\grave{S}$. Then
$$
\grave{E}\equiv\gamma^{*}(E)-\frac{1}{2}G,\ \grave{D}\equiv\gamma^{*}\big(\breve{D}\big)-\epsilon G\equiv\big(\beta\circ\gamma\big)^{*}\big(D\big)-\mu\grave{E}-\big(\mu/2+\epsilon\big)G,%
$$
where $\epsilon$ is a rational number. Then
$\lambda\epsilon+\lambda\mu/2>1/2$ (see Remark~\ref{remark:A1}).

Suppose that $j=3$. Then
$$
0\leqslant\grave{D}\cdot\grave{L}_{3}=\Big(\big(\beta\circ\gamma\big)^{*}\big(D\big)-\mu\grave{E}-\big(\mu/2+\epsilon\big)G\Big)\cdot\grave{L}_{3}=1-\mu/2-\epsilon
$$
which implies that $\epsilon+\mu/2<1$. But we know that
$\epsilon+\mu/2>3/2$.

We may assume that $Q=O_{1}$, and the~support of the~divisor $D$
contains the~line $L_{1}$. Put
$$
D=aL_{1}+\Omega\equiv -K_{S},
$$
where $a\in\mathbb{Q}$ and $a\geqslant 0$, and $\Omega$ is an
effective $\mathbb{Q}$-divisor such that
$L_{1}\not\subseteq\mathrm{Supp}(\Omega)$. Then
$$
\grave{\Omega}\equiv\big(\beta\circ\gamma\big)^{*}\big(\Omega\big)-m\grave{E}-\big(m/2+b\big)G,%
$$
where $\grave{\Omega}$ is the~proper transform of $\Omega$, and
$m$ and $b$ are non-negative rational numbers. Then
$$
\big(\beta\circ\gamma\big)^{*}\big(D\big)-\mu\grave{E}-\big(\mu/2+\epsilon\big)G\equiv\grave{D}=a\grave{L}_{1}+\grave{\Omega}\equiv\big(\beta\circ\gamma\big)^{*}\big(aL_{1}+\Omega\big)-\big(a+m\big)\grave{E}-\big(a+m/2+b\big)G,
$$
which implies that $\mu=a+m\leqslant 2$ and $\epsilon=a/2+b$. We
have
$$
\grave{L}_{1}^{2}=-1,\ \grave{E}^{2}=-1,\ G^{2}=-2,\ \grave{L}\cdot\grave{E}=0,\ \grave{L}\cdot G=\grave{E}\cdot G=1%
$$
on the~surface $\grave{S}$. The surface $\grave{S}$ is smooth
along the~curve $G$. Then
$$
-a\leqslant-a+\grave{\Omega}\cdot\grave{L}_{1}=\Big(a\grave{L}_{1}+\grave{\Omega}\Big)\cdot\grave{L}_{1}=\Big(\big(\beta\circ\gamma\big)^{*}\big(-K_{S}\big)-\big(a+m\big)\grave{E}-\big(a+m/2+b\big)G\Big)\cdot\grave{L}_{1}=1-a-m/2-b,
$$
which implies that $m/2+b\leqslant 1$ and $a+m/2+b\leqslant
1+a\leqslant 3$. Thus, the~equivalence
$$
K_{\grave{S}}+\lambda a\grave{L}_{1}+\lambda\grave{\Omega}+\lambda\big(a+m\big)\grave{E}+\lambda\big(a+m/2+b\big)G\equiv\big(\beta\circ\gamma\big)^{*}\big(K_{S}+\lambda aL_{1}+\lambda\Omega\big)%
$$
implies the~existence of a point $A\in G$ such that the~log pair
$$
\Big(\grave{S},\ \lambda a\grave{L}_{1}+\lambda\grave{\Omega}+\lambda\big(a+m\big)\grave{E}+\lambda\big(a+m/2+b\big)G\Big)%
$$
is not log canonical at the~point $A$.

Suppose that $A\not\in\grave{L}_{1}\cup\grave{E}$. Then
$(\grave{S}, \lambda\grave{\Omega}+\lambda(a+m/2+b)G)$ is not log
canonical at $A$, and
$$
2b+a=\Big(a\grave{L}_{1}+\grave{\Omega}\Big)\cdot G=a+\grave{\Omega}\cdot G>a+3,%
$$
by Remark~\ref{remark:adjunction}. We see that $b>3/2$. But
$m/2+b\leqslant 1$. We see that $A\in\grave{L}_{1}\cup\grave{E}$.

Suppose that $A\not\in\grave{L}_{1}$. The  log pair $(\grave{S},
\lambda\grave{\Omega}+\lambda(a+m)\grave{E}+\lambda(a+m/2+b)G)$ is
not log canonical at the~point $A$. Arguing as in the~previous
case, we see that
$$
m/2-b=\Big(a\grave{L}_{1}+\grave{\Omega}\Big)\cdot\grave{E}=\grave{\Omega}\cdot\grave{E}\geqslant \mathrm{mult}_{A}\Big(\grave{\Omega}\big\vert_{\grave{E}}\Big)>3-\big(a+m/2+b\big),%
$$
which implies that $a+m>3$. But $a+m\leqslant 2$. We see that
$A\in\grave{L}_{1}$.

The log pair $(\grave{S}, \lambda
a\grave{L}_{1}+\lambda\grave{\Omega}+\lambda(a+m/2+b)G)$ is not
log canonical at the~point $A$. Then
$$
1-a-m/2-b=\Big(a\grave{L}_{1}+\grave{\Omega}\Big)\cdot\grave{L}_{1}=-a+\grave{\Omega}\cdot\grave{L}_{1}>-a+3-\big(a+m/2+b\big)%
$$
by Remark~\ref{remark:adjunction}. We have $a>2$. But
$a+m\leqslant 2$.
\end{proof}

\begin{lemma}
\label{lemma:dP3-D5} Suppose that $\Sigma=\{\mathbb{D}_{5}\}$.
Then $\mathrm{lct}(S)=1/4$.
\end{lemma}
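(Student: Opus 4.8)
The plan is to prove the two inequalities $\mathrm{lct}(S)\leqslant 1/4$ and $\mathrm{lct}(S)\geqslant 1/4$ separately, with essentially all of the work in the second. For the upper bound I would invoke Remark~\ref{remark:Joonyeong-Won}, which identifies $\omega$ with the threshold $w$ computed only over the anticanonical linear system; since every $D\in|-K_{S}|$ is an effective $\mathbb{Q}$-divisor with $D\equiv -K_{S}$, this gives $\mathrm{lct}(S)\leqslant w=1/4$ at once. Concretely the bound is witnessed by the hyperplane section $T$ cut out by the plane tangent to $S$ along the distinguished direction at $O$: it splits as $T=2L_{1}+L_{2}$, where $L_{1},L_{2}$ are the two lines of $S$ through $O$ and $L_{1}$ is tangent to the non-reduced direction of the $\mathbb{D}_{5}$ point. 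A direct discrepancy computation then gives $\mathrm{lct}(S,T)=1/4$, the critical valuation being the branch node of the resolution graph, along which $T$ vanishes to order $4$.

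For the lower bound fix $\lambda<1/4$ and an effective $D\equiv -K_{S}$, and assume $(S,\lambda D)$ is not log canonical. Since $\lambda<1/4<1/3$, Lemma~\ref{lemma:smooth-points-1-3} forces $\mathrm{LCS}(S,\lambda D)\subseteq\Sigma=\{O\}$, so the only non--log-canonical point is $O$. Moreover, writing $D=\mu Z+\Omega$ for an irreducible curve $Z\not\subseteq\mathrm{Supp}(\Omega)$, the bound $\mu\leqslant\mu\deg(Z)\leqslant -K_{S}\cdot D=3<4<1/\lambda$ (using $\deg(Z)\geqslant 1$) shows that no component of $D$ can have coefficient $\geqslant 1/\lambda$; in particular $L_{1}$ enters $D$ with some coefficient $a\leqslant 3$ that I would control, and using Remark~\ref{remark:convexity} I would discard the conic residual to a line from $\mathrm{Supp}(D)$ to simplify the intersection estimates.

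The core is then a local analysis at $O$, carried out on a tower of blow-ups resolving the $\mathbb{D}_{5}$ point exactly as in the proof of Lemma~\ref{lemma:dP3-D4}. Blowing up $O$ produces a single central curve carrying one $\mathbb{A}_{1}$ point, through which $\bar L_{2}$ passes, and one $\mathbb{A}_{3}$ point, through which $\bar L_{1}$ passes; the valuation computing $1/4$ is the branch node, reached only after blowing up $O$, then the $\mathbb{A}_{3}$ point, and then once more. Writing $\bar D\equiv\alpha^{*}(D)-\sum a_{i}E_{i}$, I would first bound the $a_{i}$ from the relations $\alpha^{*}(D)\cdot E_{i}=0$ together with $\bar D\cdot E_{i}\geqslant 0$ and the nonnegativity of $\bar D$ against the proper transforms of $L_{1},L_{2}$ and of suitable conics. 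The equivalence $K_{\bar S}+\lambda\bar D+\sum\lambda a_{i}E_{i}\equiv\alpha^{*}(K_{S}+\lambda D)$ then yields a non--log-canonical point $Q$ on the exceptional locus, and I would push $Q$ through the successive blow-ups toward the branch node, at each stage applying Remarks~\ref{remark:adjunction}, \ref{remark:An} and~\ref{remark:A1} together with the intersection bounds to exclude every possible position of $Q$; the assumption $\lambda<1/4$ is what makes the final estimate at the branch node contradictory.

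The main obstacle, compared with the $\mathbb{D}_{4}$ case, is precisely that the critical valuation is the branch node buried in the $\mathbb{A}_{3}$ chain rather than a single $\mathbb{A}_{1}$ point one blow-up away. This lengthens the tower and multiplies the number of positions the non--log-canonical point can occupy (on the central curve, at the $\mathbb{A}_{1}$ or $\mathbb{A}_{3}$ point, then on each newly extracted curve and at its intersections with $\bar L_{1}$ and with the other exceptional curves), so the real difficulty is the bookkeeping: tracking the discrepancies, the induced coefficients of $L_{1}$, $L_{2}$ and of $\Omega$ after each blow-up, and the changing self-intersection numbers, and verifying that the adjunction inequality becomes contradictory in every branch. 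I expect the tightest and decisive case to be the one where $Q$ lies over the double line $L_{1}$ as it threads through the $\mathbb{A}_{3}$ chain to the branch node, where the estimates coming from $\bar L_{1}$ are sharpest.
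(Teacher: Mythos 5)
Your upper bound is fine and agrees with the paper: $-K_{S}\sim 2L_{1}+L_{2}$ and $(S,\frac{1}{4}(2L_{1}+L_{2}))$ fails to be log terminal, so $\mathrm{lct}(S)\leqslant 1/4$. The genuine gap is in the lower bound. What you give there is a programme, not a proof: after reducing to $\mathrm{LCS}(S,\lambda D)=\{O\}$ via Lemma~\ref{lemma:smooth-points-1-3}, you announce a tower of blow-ups resolving the $\mathbb{D}_{5}$ point and assert that one should ``exclude every possible position of $Q$'' by ``bookkeeping'', but none of the decisive inequalities is actually produced. That deferred bookkeeping is precisely the content of the lemma, and it is not routine here: in the $\mathbb{A}_{n}$ cases the paper closes the argument by using the many lines through $O$ and Remark~\ref{remark:convexity} to remove components from $\mathrm{Supp}(D)$ and thereby bound the multiplicities $a_{i}$ from above, whereas for $\mathbb{D}_{5}$ one has $r=2$ and essentially only the relation $-K_{S}\sim 2L_{1}+L_{2}$, so the supply of constraints is much thinner and you have not shown that the chain of adjunction inequalities actually terminates in a contradiction. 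Your description of the partial resolution (an $\mathbb{A}_{1}$ and an $\mathbb{A}_{3}$ point on the first exceptional curve, with $\bar{L}_{2}$ and $\bar{L}_{1}$ through them) is also asserted rather than verified, and the whole case analysis hinges on it.

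The paper's own lower bound sidesteps all of this with a short global argument worth comparing against. By \cite{BW79} the surface contains a line $L$ with $O\not\in L$; projecting from $L$ gives a conic $C$ with $O\not\in C$, $-K_{S}\sim C+L$ and $C\cdot L=2$ concentrated at a single point $P$, so that $(S,\frac{3}{4}(C+L))$ is log canonical and not log terminal at $P$. If $(S,\lambda D)$ were not log canonical at $O$ for some $\lambda<1/4$, then $-(K_{S}+\frac{3}{4}(C+L)+\lambda D)$ is ample and $\mathrm{LCS}(S,\frac{3}{4}(C+L)+\lambda D)$ contains both $P$ and the isolated point $O$ while being contained in $\{O\}\cup C\cup L$; since $O\not\in C\cup L$, this locus is disconnected, contradicting Remark~\ref{remark:connectedness}. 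Until you either carry out your local analysis in full or substitute an argument of this kind, the lower bound --- and hence the lemma --- is not proved.
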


\begin{proof}
We have $r=2$. We may assume that $-K_{S}\sim 2L_{1}+L_{2}$. Then
the log pair
$$
\left(S,\ \frac{1}{4}\Big(2L_{1}+L_{2}\Big)\right),
$$
is not log terminal. We may assume $(S,\lambda D)$ is not log
canonical~at~$O$ (see Lemma~\ref{lemma:smooth-points-1-3}).

It follows from \cite{BW79} that $S$ contains a line $L$ such that
$O\not\in L$. Projecting from $L$, we~see~that there is a conic
$C\subset S$  such that $O\not\in C$, $-K_{S}\sim C+L$, and
$C\cdot L=2$. Put $P=C\cap L$. Then
$$
P\cup O\subseteq\mathrm{LCS}\left(S,\ \frac{3}{4}\Big(C+L\Big)+\lambda D\right)\subseteq P\cup O\cup C\cup L,%
$$
which is impossible by Remark~\ref{remark:connectedness}.
\end{proof}

\begin{lemma}
\label{lemma:dP3-E6} Suppose that $\Sigma=\{\mathbb{E}_{6}\}$.
Then $\mathrm{lct}(S)=1/6$.
\end{lemma}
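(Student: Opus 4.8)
The plan is to prove the two bounds $\mathrm{lct}(S)\leqslant 1/6$ and $\mathrm{lct}(S)\geqslant 1/6$ separately, the first by one explicit divisor and the second by the resolution-and-adjunction method of the previous lemmas. For the upper bound I use the unique line: the $\mathbb{E}_6$ cubic carries a single line $L$, it passes through $O$, and the plane tangent to $S$ along $L$ cuts out the triple line, so that $3L\sim -K_S$ and $L^2=1/3$. On the minimal resolution $\alpha\colon\bar S\to S$ the curves $E_1,\dots,E_6$ form the $\mathbb{E}_6$ diagram and $\bar L$ is a $(-1)$-curve meeting an end $E_1$ of the long arm, so $\bar L,E_1,\dots,E_6$ is an $\mathbb{E}_7$ configuration. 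Writing $\alpha^{*}L=\bar L+\sum_i m_iE_i$ and solving $\alpha^{*}L\cdot E_j=0$ gives $(m_1,\dots,m_6)=(4/3,5/3,2,4/3,2/3,1)$, whose maximum is $m_3=2$. Hence in $\alpha^{*}(K_S+\frac12 L)=K_{\bar S}+\frac12\bar L+\sum_i\frac{m_i}{2}E_i$ every coefficient is $\leqslant 1$ and that of $E_3$ equals $1$, so $(S,\frac12 L)$ is log canonical but not log terminal at $O$. Taking $D=3L\equiv -K_S$ we get $(S,\frac16 D)=(S,\frac12 L)$, so the log canonical threshold of this $D$ is $1/6$ and $\mathrm{lct}(S)\leqslant 1/6$.

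For the lower bound I fix $\lambda<1/6$ and an effective $D\equiv -K_S$. Since $\lambda<1/3$, Lemma~\ref{lemma:smooth-points-1-3} gives $\mathrm{LCS}(S,\lambda D)\subseteq\{O\}$, so I may assume $(S,\lambda D)$ is not log canonical at $O$ and seek a contradiction. Write $D=aL+\Omega$ with $a\geqslant 0$ and $L\not\subseteq\mathrm{Supp}(\Omega)$; intersecting $D$ with a general conic $C\sim 2L$ residual to $L$ (so $C\cdot L=2/3$) gives $a\leqslant 3$. On $\bar S$ let $\bar\Omega$ be the proper transform of $\Omega$, put $t_i=E_i\cdot\bar\Omega\geqslant 0$ and $s=\bar L\cdot\bar\Omega\geqslant 0$, and let $a_i$ be the multiplicity of $\alpha^{*}D$ along $E_i$, so that $a_i=am_i+a_i'$ where $(a_1',\dots,a_6')=M^{-1}(t_1,\dots,t_6)$ and $M$ is the $\mathbb{E}_6$ Cartan matrix. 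Since $-K_{\bar S}=3\alpha^{*}L$, intersecting with $\bar\Omega$ produces the single constraint $s+\sum_i m_it_i=1-a/3$.

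The matrix $M^{-1}$ has nonnegative entries, and optimising against $\sum_i m_it_i\leqslant 1-a/3$ shows $a_i=am_i+a_i'\leqslant a+3\leqslant 6$ for every $i$, the value $6$ occurring only for $E_3$; consequently every coefficient of the log pull-back $(\bar S,\lambda\bar D+\sum_i\lambda a_iE_i)$ satisfies $\lambda a_i<1$. By Remark~\ref{remark:log-pull-back} this pair fails to be log canonical at some point $Q\in\cup_iE_i$ over $O$. Peeling off a curve $E_i\ni Q$ (its coefficient is $\leqslant 1$) and applying Remark~\ref{remark:adjunction} along $E_i$ gives $\mathrm{mult}_Q\big((\lambda\bar D+\sum_{j\ne i}\lambda a_jE_j)|_{E_i}\big)>1$. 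The left-hand side is bounded by the local contributions $\lambda t_i$ from $\bar\Omega$, together with $\lambda a_{j_0}$ from the unique neighbour $E_{j_0}$ through $Q$ when $Q$ is a node, or $\lambda a$ from $\bar L$ when $i=1$ and $Q=\bar L\cap E_1$. Thus the three possible locations of $Q$ yield, respectively, $\lambda t_i>1$, $\lambda(a+t_1)>1$, and $\lambda(a_{j_0}+t_i)>1$.

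The first two are impossible, since $t_i\leqslant 3/2$ and $a+t_1\leqslant 3$. The heart of the matter is the nodal inequality, and the main obstacle is to show $a_{j_0}+t_i\leqslant 6=1/\omega$. Here $a_{j_0}+t_i=am_{j_0}+\sum_k(\delta_{ik}+(M^{-1})_{j_0k})t_k$ is linear in $a$ and in the $t_k$; optimising against the constraint, its value on $0\leqslant a\leqslant 3$ is at most $\max(R,3m_{j_0})$, where $R=\max_k(\delta_{ik}+(M^{-1})_{j_0k})/m_k<6$ and $3m_{j_0}\leqslant 3m_3=6$, with $6$ reached only for the edges at $E_3$ and only at $a=3$, i.e. for the borderline divisor $D=3L$. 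Hence $\lambda(a_{j_0}+t_i)\leqslant 6\lambda<1$, contradicting the adjunction inequality and finishing the lower bound. The delicate inputs are precisely the two that produce the number $6$: the geometric fact that $\bar L$ meets the $\mathbb{E}_6$ chain at an end of its long arm, which fixes the weights $m_i$ (hence $-K_{\bar S}=3\alpha^{*}L$ and $\max_i m_i=2$), and the exact inverse-Cartan optimisation, which saturates exactly at the divisor realising the upper bound. If desired, the cases $Q\in\bar L$ or $Q$ general may alternatively be dispatched using Remarks~\ref{remark:convexity} and~\ref{remark:connectedness}, but the adjunction estimate already covers them.
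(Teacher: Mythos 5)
Your proof is correct, and for the lower bound it takes a genuinely different route from the paper. The upper bound is essentially the paper's: it likewise asserts that $(S,\frac{1}{2}L_{1})$ is log canonical but not log terminal, and your discrepancy computation $(m_{1},\ldots,m_{6})=(4/3,5/3,2,4/3,2/3,1)$ with $\max_{i}m_{i}=2$ is exactly the verification the paper leaves implicit. For the lower bound, however, the paper's argument is global and very short: it uses a plane cuspidal cubic $C\subset S$ with $O\not\in C$, so that $\big(S,\frac{5}{6}C\big)$ is log canonical with locus of log canonical singularities equal to the cusp of $C$; for $\lambda<1/6$ the divisor $-(K_{S}+\frac{5}{6}C+\lambda D)\equiv(\frac{1}{6}-\lambda)(-K_{S})$ is ample, while $\mathrm{LCS}(S,\frac{5}{6}C+\lambda D)$ would contain both the cusp and $O$ yet lie in $C\cup\{O\}$, contradicting Remark~\ref{remark:connectedness} — the same device as in Lemma~\ref{lemma:dP3-D5} (the text's pointer to Lemma~\ref{lemma:dP3-D4} notwithstanding). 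Your argument instead stays entirely on the minimal resolution: the decomposition $a_{\bullet}=am+M^{-1}t$, the single constraint $s+\sum_{i}m_{i}t_{i}=1-a/3$, the bound $a_{i}\leqslant a+3\leqslant 6$, and the linear-programming estimate $a_{j_{0}}+t_{i}\leqslant\max(R,3m_{j_{0}})\leqslant 6$ with equality only for $D=3L$ at the central curve $E_{3}$ are all correct, and the adjunction inequalities at a general point of $E_{i}$, at $\bar{L}\cap E_{1}$, and at a node are applied properly. What you lose is brevity; what you gain is a self-contained computation that does not presuppose the existence of the cuspidal cubic avoiding $O$ and that exhibits precisely where the threshold $1/6$ is saturated.
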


\begin{proof}
We have $r=1$. The log pair $(S, \frac{1}{2}L_{1})$ is not
log~ter\-mi\-nal. The surface $S$ contains a~plane cuspidal curve
$C$ such that $O\not\in C$. The proof of Lemma~\ref{lemma:dP3-D4}
implies that $\mathrm{lct}(S)=1/6$.
\end{proof}

\begin{lemma}
\label{lemma:dP3-A2} Suppose that $\Sigma=\{\mathbb{A}_{2}\}$.
Then $\mathrm{lct}(S)=1/2$.
\end{lemma}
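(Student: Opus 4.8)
The plan is to establish the two inequalities $\mathrm{lct}(S)\leqslant 1/2$ and $\mathrm{lct}(S)\geqslant 1/2$ separately, following closely the strategy of Lemma~\ref{lemma:dP3-A1-many}, which yields the same value. For the upper bound I would exhibit a line $L_{1}\subset S$ whose tangent plane cuts out $2L_{1}+L_{2}\sim -K_{S}$ for some line $L_{2}$; then the pair $(S,\frac{1}{2}(2L_{1}+L_{2}))=(S,L_{1}+\frac{1}{2}L_{2})$ is log canonical but not log terminal along $L_{1}$, so $\mathrm{lct}(S)\leqslant 1/2$. It then remains to show that $(S,\lambda D)$ is log canonical for every $\lambda<1/2$ and every effective $\mathbb{Q}$-divisor $D\equiv -K_{S}$, which is the substantial part.

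First I would eliminate the possibility that $D$ contains a curve of large multiplicity. If $D=\mu Z+\Omega$ with $Z$ irreducible, $Z\not\subseteq\mathrm{Supp}(\Omega)$ and $\mu\geqslant 1/\lambda>2$, then $3=-K_{S}\cdot D\geqslant\mu\deg(Z)>2\deg(Z)$ forces $Z$ to be a line, and choosing a general conic $C$ with $C\cdot Z\geqslant 1$ gives $2=C\cdot D\geqslant\mu C\cdot Z\geqslant\mu>2$, a contradiction (exactly as in Lemmas~\ref{lemma:dP3-A1-single} and~\ref{lemma:dP3-A1-many}). Hence no curve lies in $\mathrm{LCS}(S,\lambda D)$ by Remark~\ref{remark:curves}, so the locus is finite; Lemma~\ref{lemma:smooth-points-LCS} places it inside $\Sigma=\{O\}$, and since $-(K_{S}+\lambda D)$ is ample, Remark~\ref{remark:connectedness} gives $\mathrm{LCS}(S,\lambda D)=\{O\}$.

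Next I would pass to the resolution of the $\mathbb{A}_{2}$ point. Using the full resolution with the chain $E_{1},E_{2}$ (or the partial resolution extracting one $(-2)$-curve carrying a residual $\mathbb{A}_{1}$ point, for which Remark~\ref{remark:An} with $n=2$ yields $a_{1}>1/3$), the equivalence $K_{\bar{S}}+\lambda\bar{D}+\lambda\sum a_{i}E_{i}\equiv\alpha^{*}(K_{S}+\lambda D)$ produces a point $Q$ on the exceptional locus at which the transformed pair fails to be log canonical. I would then run the same case analysis as in Lemma~\ref{lemma:dP3-A1-many}, now with the $\mathbb{A}_{2}$ numerics $\bar{L}_{i}^{2}=-1$ and $\bar{C}^{2}=0$: if $Q$ lies on the proper transform of a line $L_{i}$ through $O$, I write $D=aL_{i}+\Upsilon$ and apply adjunction (Remark~\ref{remark:adjunction}) along $\bar{L}_{i}$ to push $a$ out of its admissible range; if $Q$ avoids all lines through $O$, I pick the conic $Z\subset S$ through $O$ with $Q\in\bar{Z}$ and, when $Z$ is irreducible, obtain the contradiction directly from Remark~\ref{remark:adjunction}.

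The hard part will be the remaining case in which this conic degenerates as $Z=L_{2}+L_{2}'$. Here I expect to argue as in the closing paragraphs of Lemma~\ref{lemma:dP3-A1-many}, or, if the intersection point of the two lines is again a singular point, as in Lemma~\ref{lemma:dP3-D4}: introduce the residual hyperplane section $T=C_{2}+L_{2}$ through $O$, write $D=cL_{2}+\Xi$, and combine the adjunction inequalities along $\bar{L}_{2}$ and along $\bar{C}_{2}$ to force $c$ outside $[0,2]$. Controlling the discrepancies $a_{i}$ along the $\mathbb{A}_{2}$ chain through the successive blow-ups, and verifying that every degeneration of the auxiliary conic is covered by the finitely many lines through the $\mathbb{A}_{2}$ point, is the delicate bookkeeping on which the whole argument turns.
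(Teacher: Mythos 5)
Your toolkit is the right one and the reduction to $\mathrm{LCS}(S,\lambda D)=\{O\}$ is fine, but there are two genuine gaps, one at each end. For the upper bound you posit a plane section $2L_{1}+L_{2}\sim -K_{S}$. For a cubic surface whose only singularity is a single point $O$ of type $\mathbb{A}_{2}$ such a section need not exist, and it is not what computes the bound: since $O$ is a double point whose tangent cone is a pair of planes, each of those two planes cuts $S$ in three distinct lines concurrent at $O$, giving $-K_{S}\sim L_{1}+L_{2}+L_{3}\sim L_{4}+L_{5}+L_{6}$; the log pullback of $K_{S}+\frac{1}{2}(L_{1}+L_{2}+L_{3})$ to the minimal resolution has coefficient exactly $1$ along the exceptional curve $E_{1}$ met by $\bar{L}_{1},\bar{L}_{2},\bar{L}_{3}$, so this pair is log canonical and not log terminal and $\mathrm{lct}(S)\leqslant 1/2$. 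The configuration $2L_{1}+L_{2}$ you invoke belongs to the cases where a line passes through two singular points (or to the $\mathbb{A}_{4}$, $\mathbb{D}_{5}$ cases), not to this one.

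For the lower bound the decisive step is exactly the one you defer. Everything hinges on the upper bounds $a_{1}\leqslant 1$ and $a_{2}\leqslant 1$ for the coefficients in $\bar{D}\equiv\alpha^{*}(D)-a_{1}E_{1}-a_{2}E_{2}$, and these come from the two anticanonical triples above: by Remark~\ref{remark:convexity} one may assume one line from each triple, say $L_{1}$ and $L_{4}$, is not in $\mathrm{Supp}(D)$, whence $0\leqslant\bar{D}\cdot\bar{L}_{1}=1-a_{1}$ and $0\leqslant\bar{D}\cdot\bar{L}_{4}=1-a_{2}$. Once these are in hand no auxiliary conic through the bad point $Q$ is needed at all: applying Remark~\ref{remark:adjunction} directly to the exceptional curves gives $2a_{1}-a_{2}=\bar{D}\cdot E_{1}>1/\lambda>2$ when $Q\in E_{1}\setminus E_{2}$ (so $a_{1}>4/3$, using $2a_{2}\geqslant a_{1}$), and $2a_{1}-a_{2}>2-a_{2}$ together with $2a_{2}-a_{1}>2-a_{1}$ when $Q=E_{1}\cap E_{2}$, each contradicting $a_{i}\leqslant 1$. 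Your plan instead transplants the auxiliary-conic argument of Lemma~\ref{lemma:dP3-A1-many}: but an irreducible conic on $S$ through $O$ is necessarily residual to one of the six lines through $O$ (any plane section through a double point has multiplicity at least two there), its tangent direction at $O$ is constrained to the tangent cone, and you would still need the bounds on the $a_{i}$ to close the adjunction inequalities. You explicitly leave this bookkeeping, and all the degenerate cases of the conic, unresolved; that is precisely where the proof lives, so the proposal is a plausible outline rather than a proof, and the route it chooses is both harder than necessary and unverified.
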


\begin{proof}
We may assume that $-K_{S}\sim L_{1}+L_{2}+L_{3}\sim
L_{4}+L_{5}+L_{6}$. The log pair
$$
\left(S,\ \frac{1}{2}\Big(L_{1}+L_{2}+L_{3}\Big)\right)
$$
is log canonical and not log terminal. Hence, we may assume that
$(S,\lambda D)$ is not log canonical.

The proof of Lemma~\ref{lemma:dP3-A1-single} implies that
$\mathrm{LCS}(S,\lambda D)=O$.

Let $\bar{H}$ be a proper transform on $\bar{S}$ of a general
hyperplane section that contains $O$. Then
$$
0\leqslant\bar{H}\cdot\bar{D}=3-a_{1}-a_{2},\ 2a_{1}-a_{2}=E_{1}\cdot\bar{D}\geqslant 0,\ 2a_{2}-a_{1}=E_{2}\cdot\bar{D}\geqslant 0,%
$$
which implies that $a_{1}\leqslant 2$ and $a_{2}\leqslant 2$.
There is a point $Q\in E_{1}\cup E_{2}$ such that the
singularities of the log pair
$(\bar{S},\lambda(\bar{D}+a_{1}E_{1}+a_{2}E_{2}))$ are not log
canonical at $Q$. We may assume that~$Q\in E_{1}$,~and
$$
\bar{L}_{1}\cdot E_{1}=\bar{L}_{2}\cdot E_{1}=\bar{L}_{3}\cdot E_{1}=\bar{L}_{4}\cdot E_{2}=\bar{L}_{5}\cdot E_{2}=\bar{L}_{6}\cdot E_{2}=1,%
$$
which implies that $\bar{L}_{1}\cdot E_{2}=\bar{L}_{2}\cdot
E_{2}=\bar{L}_{3}\cdot E_{2}=\bar{L}_{4}\cdot
E_{1}=\bar{L}_{5}\cdot E_{1}=\bar{L}_{6}\cdot E_{1}=0$.

It follows from Remark~\ref{remark:convexity} that we may assume
that
$\bar{L}_{1}\not\subseteq\mathrm{Supp}(D)\not\supseteq\bar{L}_{4}$.
Then
$$
1-a_{1}=\bar{D}\cdot\bar{L}_{1}\geqslant 0,\ 1-a_{2}=\bar{D}\cdot\bar{L}_{4}\geqslant 0,%
$$
which implies that $a_{1}\leqslant 1$ and $a_{2}\leqslant 1$.

Suppose that $Q\not\in E_{2}$. Then $(\bar{S},
\lambda\bar{D}+E_{1})$ is not log canonical at $Q$. We have
$$
2a_{1}-a_{2}=\bar{D}\cdot E_{1}>1/\lambda>2,
$$
by Remark~\ref{remark:adjunction}. Then $a_{1}\geqslant 4/3$,
which is impossible. Hence, we see that $Q\in E_{2}$.

The log pairs $(\bar{S}, \lambda\bar{D}+E_{1}+a_{2}E_{2})$ and
$(\bar{S}, \lambda\bar{D}+a_{1}E_{1}+E_{2})$ are not log canonical
at $Q$. Then
$$
2a_{1}-a_{2}=\bar{D}\cdot E_{1}>1/\lambda-a_{2}>2-a_{2},\ 2a_{2}-a_{1}=\bar{D}\cdot E_{2}>1/\lambda-a_{1}>2-a_{1}%
$$
by
Remark~\ref{remark:adjunction}. Then $a_{1}>1$ and $a_{2}>1$,
which is impossible.
\end{proof}

\begin{lemma}
\label{lemma:dP3-A3} Suppose that $\Sigma=\{\mathbb{A}_{3}\}$.
Then $\mathrm{lct}(S)=1/2$.
\end{lemma}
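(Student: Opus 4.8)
The plan is to prove both inequalities $\mathrm{lct}(S)\le 1/2$ and $\mathrm{lct}(S)\ge 1/2$. For the upper bound I would first record the geometry at $O$: an $\mathbb{A}_3$ point of a cubic has tangent cone a pair of planes, and a direct local computation in the equation $xy=z^4$ shows that exactly two lines of $S$ pass through $O$, say $L_1$ and $L_2$, whose proper transforms $\bar L_1,\bar L_2$ meet the two ends $E_1,E_3$ of the chain $E_1-E_2-E_3$ (each once) and are disjoint from the middle curve $E_2$ (cf. \cite{BW79}). The plane spanned by $L_1$ and $L_2$ cuts out $-K_S\sim L_1+L_2+L_3$ with $L_3$ a line not through $O$, forming a triangle with vertices $O$, $L_1\cap L_3$, $L_2\cap L_3$. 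At the two smooth vertices the pairs $(S,\frac12(L_1+L_3))$ and $(S,\frac12(L_2+L_3))$ are strictly log canonical, so $\mathrm{lct}(S)\le 1/2$; and resolving $O$ one checks that $\alpha^*(\frac12(L_1+L_2))$ has coefficient $\frac12$ along each $E_i$, so the pair stays log canonical over $O$, confirming that the value $1/2$ is attained.

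For the lower bound take $\lambda<1/2$ and $D\equiv -K_S$, and assume $(S,\lambda D)$ is not log canonical, aiming for a contradiction. First I would rule out a one-dimensional $\mathrm{LCS}$: if $D=\mu Z+\Omega$ with $Z$ an irreducible curve and $\mu\ge 1/\lambda>2$, then $3=-K_S\cdot D\ge\mu\deg(Z)$ forces $Z$ to be a line, and intersecting with a general complementary conic $C$ with $-K_S\sim Z+C$ gives $2=C\cdot D\ge\mu\,C\cdot Z$, which bounds $\mu$ strictly below $2$ regardless of whether $Z$ passes through $O$ — a contradiction, exactly as in Lemma~\ref{lemma:dP3-A2}. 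Hence $\mathrm{LCS}(S,\lambda D)$ is finite, so by Lemma~\ref{lemma:smooth-points-LCS} together with the connectedness of Remark~\ref{remark:connectedness} it equals $\{O\}$.

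Next I would pass to the minimal resolution $\alpha\colon\bar S\to S$, write $\bar D\equiv\alpha^*(D)-a_1E_1-a_2E_2-a_3E_3$, and use $E_i\cdot\bar D\ge 0$ to get $2a_1\ge a_2$, $2a_2\ge a_1+a_3$ and $2a_3\ge a_2$. Applying Remark~\ref{remark:convexity} to remove $L_1,L_2$ from $\mathrm{Supp}(D)$ and then intersecting with $\bar L_1,\bar L_2$ gives $0\le\bar L_1\cdot\bar D=1-a_1$ and $0\le\bar L_2\cdot\bar D=1-a_3$, hence $a_1,a_3\le 1$; combined with the chain inequalities this yields $a_2\le 2\min(a_1,a_3)\le 2$ and $a_1,a_3\ge a_2/2$, so in particular $\lambda a_i<1$ for all $i$. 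Since $(S,\lambda D)$ is not log canonical at $O$, the pair $(\bar S,\lambda\bar D+\lambda\sum a_iE_i)$ fails to be log canonical at some point $Q\in E_1\cup E_2\cup E_3$.

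Finally I would run the case analysis on the location of $Q$, in each case raising the relevant coefficient to $1$ and applying Remark~\ref{remark:adjunction}. If $Q$ lies in the interior of an end curve, say $E_1$, then $2a_1-a_2=\bar D\cdot E_1>1/\lambda>2$ contradicts $a_1\le 1$; if $Q$ lies in the interior of $E_2$, then $\bar D\cdot E_2=2a_2-a_1-a_3\le a_2\le 2<1/\lambda$, using $a_1,a_3\ge a_2/2$, again a contradiction; and if $Q$ is a node of the chain, say $Q=E_1\cap E_2$, then applying adjunction to $E_1$ while keeping $\lambda a_2E_2$ in the boundary gives $2\lambda a_1>1$, i.e.\ $a_1>1$, which is impossible. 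The new feature relative to Lemma~\ref{lemma:dP3-A2} is the middle curve $E_2$: since no line through $O$ meets $E_2$, there is no direct bound on $a_2$, and the interior-$E_2$ case can only be closed by routing through $a_1,a_3\ge a_2/2$. I expect this middle-curve bookkeeping, together with confirming that $L_1$ and $L_2$ are exactly the two lines through $O$ and that they meet the two ends of the chain, to be the main point requiring care; the remaining cases are formally identical to the $\mathbb{A}_2$ argument.
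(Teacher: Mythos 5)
Your overall strategy (rule out curves in the $\mathrm{LCS}$, localize at $O$, then run adjunction against the chain $E_{1}$--$E_{2}$--$E_{3}$) is the same as the paper's, but the geometric input is wrong and this breaks both halves of the argument. A cubic with one $\mathbb{A}_{3}$ point has $r=5$ lines through $O$, not $2$: writing the cubic as $wf_{2}+f_{3}$ with $f_{2}$ of rank two, the lines through $O$ correspond to the points of $\{f_{2}=0\}\cap\{f_{3}=0\}\subset\mathbb{P}^{2}$, and for type $\mathbb{A}_{\geqslant 3}$ the vertex of the pair of planes is one of these points; the corresponding line is exactly the one whose proper transform meets the \emph{middle} curve $E_{2}$. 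So your assertion that no line through $O$ meets $E_{2}$ is false. Moreover your upper-bound witness does not work even in your own picture: at a smooth point where two lines meet transversally, the pair $\left(S,\frac{1}{2}\left(L_{i}+L_{3}\right)\right)$ is klt (two smooth branches, each with coefficient $\frac{1}{2}$), not strictly log canonical. The paper instead takes the three coplanar lines through $O$, with $\bar{L}_{1},\bar{L}_{2}$ meeting $E_{1}$ and $\bar{L}_{3}$ meeting $E_{2}$, and checks that $\alpha^{*}\big(\frac{1}{2}(L_{1}+L_{2}+L_{3})\big)$ has coefficient $1$ along $E_{1}$ and $E_{2}$; that is where $1/2$ is attained.

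The lower bound has a second, independent gap: Remark~\ref{remark:convexity} does not let you delete both $L_{1}$ and $L_{2}$ from $\mathrm{Supp}(D)$. It only lets you assume that \emph{some} component of a chosen log canonical anticanonical divisor is absent, and you do not get to choose which one. From $-K_{S}\sim L_{1}+L_{2}+L_{3}$ the paper extracts only the dichotomy ``either $a_{1}\leqslant 1$ or $a_{2}\leqslant 1$'' (note $\bar{L}_{3}\cdot\bar{D}=1-a_{2}$ because $\bar{L}_{3}$ meets $E_{2}$), and similarly ``either $a_{3}\leqslant 1$ or $a_{2}\leqslant 1$''. With only these dichotomies your case analysis no longer closes as stated: for instance, when $Q$ lies in the interior of $E_{1}$, the inequality $2a_{1}>2+a_{2}$ must be combined with $2a_{2}\geqslant a_{1}+a_{3}$ and $2a_{3}\geqslant a_{2}$ to force $a_{2}>1$ as well before either branch of the dichotomy is contradicted, and your interior-$E_{2}$ shortcut via $a_{1},a_{3}\geqslant a_{2}/2$ alone is likewise insufficient without an a priori bound $a_{2}\leqslant 2$, which in the paper comes from the line meeting $E_{2}$. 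Both the simultaneous bounds $a_{1}\leqslant 1$ and $a_{3}\leqslant 1$ and the resulting streamlined case analysis therefore need to be replaced by the paper's longer bookkeeping.
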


\begin{proof}
We have $r=5$. Then $\bar{L}_{i}^{2}=-1$ and
$\bar{L}_{i}\cdot\bar{L}_{j}=0$ for $i\ne j$. We may assume~that
$$
\bar{L}_{1}\cdot E_{1}=\bar{L}_{2}\cdot E_{1}=\bar{L}_{3}\cdot E_{2}=\bar{L}_{4}\cdot E_{3}=\bar{L}_{5}\cdot E_{3}=1,%
$$
which implies that $\bar{L}_{3}\cdot E_{1}=\bar{L}_{3}\cdot
E_{2}=0$ and
$$
\bar{L}_{1}\cdot E_{2}=\bar{L}_{2}\cdot E_{2}=\bar{L}_{1}\cdot E_{3}=\bar{L}_{2}\cdot E_{3}=\bar{L}_{4}\cdot E_{2}=\bar{L}_{5}\cdot E_{2}=\bar{L}_{4}\cdot E_{1}=\bar{L}_{5}\cdot E_{1}=0.%
$$

We have $-K_{S}\sim L_{1}+L_{2}+L_{3}$. But it follows from
elementary calculations that
$$
\bar{L}_{1}\equiv\alpha^{*}\big(L_{1}\big)-\frac{3}{4}E_{1}-\frac{1}{2}E_{2}-\frac{1}{4}E_{3},\ \bar{L}_{2}\equiv\alpha^{*}\big(L_{2}\big)-\frac{3}{4}E_{1}-\frac{1}{2}E_{2}-\frac{1}{4}E_{3},\ \bar{L}_{3}\equiv\alpha^{*}\big(L_{3}\big)-\frac{1}{2}E_{1}-E_{2}-\frac{1}{2}E_{3},%
$$
which implies that $\mathrm{lct}(S)\leqslant 1/2$. Hence, we may
assume that $(S,\lambda D)$ is not log canonical.

Suppose that there are a line $L\subset S$ and a rational number
$\mu\geqslant 1/\lambda$ such that $D=\mu L+\Omega$, where
$\Omega$ is an~effective $\mathbb{Q}$-divisor, whose support does
not contain the line $L$.  Then
$$
2=C\cdot D=\mu C\cdot L+C\cdot\Omega\geqslant \mu C\cdot L>2 C\cdot L,%
$$
where $C$ is a general conic on the surface $S$ such that the
divisor $C+L$ is a hyperplane section of the surface $S$. Then
$|L\cap C|=1$, which implies that $L=L_{3}$. But $L_{3}\cdot C=1$.

It follows from Remark~\ref{remark:connectedness} and
Lemma~\ref{lemma:smooth-points-LCS} that $\mathrm{LCS}(S,\lambda
D)=O$.

Let $\bar{H}$ be a general curve in
$|-K_{\bar{S}}-\sum_{i=1}^{3}E_{i}|$. Then
$$
a_{1}+a_{3}\leqslant 3,\ 2a_{1}\geqslant a_{2},\ 2a_{2}\geqslant a_{1}+a_{3},\ 2a_{3}\geqslant a_{2},%
$$
because $\bar{H}\cdot\bar{D}\geqslant 0$,
$E_{1}\cdot\bar{D}\geqslant 0$, $E_{2}\cdot\bar{D}\geqslant 0$,
$E_{3}\cdot\bar{D}\geqslant 0$, respectively.

We may assume that either $L_{1}\not\subseteq\mathrm{Supp}(D)$ or
$L_{3}\not\subseteq\mathrm{Supp}(D)$. But
$$
\bar{L}_{1}\cdot\bar{D}=1-a_{1},\ \bar{L}_{3}\cdot\bar{D}=1-a_{2},%
$$
which implies that either $a_{1}\leqslant 1$ or $a_{2}\leqslant
1$. Similarly, we assume that either $a_{3}\leqslant 1$ or
$a_{2}\leqslant 1$.

We have $a_{1}\leqslant 2$, $a_{2}\leqslant 2$, $a_{3}\leqslant
2$. Then there is a point $Q\in E_{1}\cup E_{2}\cup E_{3}$ such
that the log pair
$(\bar{S},\lambda(\bar{D}+a_{1}E_{1}+a_{2}E_{2}+a_{3}E_{3}))$ is
not log canonical at $Q$. We may assume that $Q\not\in E_{3}$.

Suppose that $Q\not\in E_{2}$. Then $(\bar{S},
\lambda\bar{D}+E_{1})$ is not log canonical at the point $Q$. We
have
$$
2a_{1}-a_{2}=\bar{D}\cdot E_{1}>1/\lambda>2,
$$
by Remark~\ref{remark:adjunction}. Then $a_{1}>3/2$ and $a_{2}>1$.
But either $a_{1}\leqslant 1$ or $a_{2}\leqslant 1$.
Contradiction.

Suppose that $Q\in E_{2}\cap E_{1}$. Arguing as in the proof of of
Lemma~\ref{lemma:dP3-A2}, we see that
$$
2a_{1}-a_{2}=\bar{D}\cdot E_{1}>1/\lambda-a_{2}>2-a_{2},\ 2a_{2}-a_{1}-a_{3}=\bar{D}\cdot E_{2}>1/\lambda-a_{1}>2-a_{1}%
$$
by Remark~\ref{remark:adjunction}. Then $a_{1}>1$ and
$2a_{2}>2+a_{3}$, which is impossible.

We see that $Q\in E_{2}$ and $Q\not\in E_{1}$. Then $(\bar{S},
\lambda\bar{D}+E_{2})$ is not log canonical at $Q$. We have
$$
2a_{2}-a_{1}-a_{3}=\bar{D}\cdot E_{2}>1/\lambda>2,
$$
which implies that $a_{1}>3/2$ and $a_{2}>2$. The latter is
impossible.
\end{proof}

\begin{lemma}
\label{lemma:dP3-A4} Suppose that $\Sigma=\{\mathbb{A}_{4}\}$.
Then $\mathrm{lct}(S)=1/3$.
\end{lemma}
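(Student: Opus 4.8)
The plan is to establish the two estimates $\mathrm{lct}(S)\leqslant 1/3$ and $\mathrm{lct}(S)\geqslant 1/3$ separately. For the first I would exhibit one explicit anticanonical divisor; for the second I would take an arbitrary $D\equiv-K_{S}$, assume $(S,\lambda D)$ is not log canonical for some $\lambda<1/3$, and run the resolution-and-adjunction machine of the previous lemmas to a contradiction. Throughout I use the minimal resolution $\alpha\colon\bar{S}\to S$, whose exceptional locus over the $\mathbb{A}_{4}$ point $O$ is a chain $E_{1},E_{2},E_{3},E_{4}$ of $(-2)$-curves with $E_{i}\cdot E_{i+1}=1$.

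For the upper bound I would use the two lines through $O$ whose proper transforms meet $E_{1}$ and $E_{2}$ respectively: calling them $L_{1}$ and $L_{2}$, elementary intersection theory on $\bar{S}$ gives
$$
\bar{L}_{1}\equiv\alpha^{*}(L_{1})-\tfrac{4}{5}E_{1}-\tfrac{3}{5}E_{2}-\tfrac{2}{5}E_{3}-\tfrac{1}{5}E_{4},\qquad
\bar{L}_{2}\equiv\alpha^{*}(L_{2})-\tfrac{3}{5}E_{1}-\tfrac{6}{5}E_{2}-\tfrac{4}{5}E_{3}-\tfrac{2}{5}E_{4}.
$$
The plane tangent to $S$ along $L_{2}$ cuts out $2L_{2}+L_{1}$, so $L_{1}+2L_{2}\sim-K_{S}$, and adding the displayed equivalences yields $\alpha^{*}(-K_{S})-\overline{L_{1}+2L_{2}}=2E_{1}+3E_{2}+2E_{3}+E_{4}$. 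Hence the log pull-back of $\bigl(S,\lambda(L_{1}+2L_{2})\bigr)$ carries the coefficient $3\lambda$ along $E_{2}$, so this pair fails to be log canonical for $\lambda>1/3$ and $\mathrm{lct}(S)\leqslant 1/3$.

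For the lower bound take $\lambda<1/3$ and assume $(S,\lambda D)$ is not log canonical. Since $1/\lambda>3=-K_{S}\cdot D$, no irreducible curve can occur in $D$ with coefficient $\geqslant 1/\lambda$ (its degree would be less than $1$), so $\mathrm{LCS}(S,\lambda D)$ contains no curve; with Lemma~\ref{lemma:smooth-points-1-3} this forces $\mathrm{LCS}(S,\lambda D)=\{O\}$. Writing $\bar{D}\equiv\alpha^{*}(D)-\sum_{i=1}^{4}a_{i}E_{i}$ with $a_{i}\geqslant 0$, the inequalities $E_{i}\cdot\bar{D}\geqslant 0$ give the concavity relations $2a_{1}\geqslant a_{2}$, $2a_{2}\geqslant a_{1}+a_{3}$, $2a_{3}\geqslant a_{2}+a_{4}$, $2a_{4}\geqslant a_{3}$, while $\bar{H}\cdot\bar{D}\geqslant 0$ for $\bar{H}\in|-K_{\bar{S}}-\sum E_{i}|$ gives $a_{1}+a_{4}\leqslant 3$. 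There is a point $Q\in\cup E_{i}$ at which $(\bar{S},\lambda\bar{D}+\sum\lambda a_{i}E_{i})$ is not log canonical. If $Q$ is an interior point of some $E_{i}$, then Remark~\ref{remark:adjunction} (dropping the other $E_{j}$ from the boundary and raising the coefficient of $E_{i}$ to $1$) gives $\bar{D}\cdot E_{i}>1/\lambda>3$, and feeding the value $\bar{D}\cdot E_{i}=2a_{i}-a_{i-1}-a_{i+1}$ into the concavity relations forces $a_{1}+a_{4}>3$, contradicting the bound above. For example on $E_{1}$ the relation $2a_{1}-a_{2}>3$ combined with $a_{3}\geqslant 2a_{2}/3$ and $a_{4}\geqslant a_{3}/2$ gives $a_{1}>12/5$ and $a_{4}>3/5$; all four interior cases collapse to the same contradiction with $a_{1}+a_{4}\leqslant 3$.

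The remaining cases, where $Q$ is one of the three nodes $E_{i}\cap E_{i+1}$ or lies on one of the lines through $O$, are where the real difficulty lies. Here the plain adjunction bound is only of the form $a_{i}>3/2$ (or $2a_{2}-a_{3}>3$ at $E_{1}\cap E_{2}$), which the relations alone do not contradict, as the feasible vector $a\approx(3/2,\,12/5,\,8/5,\,4/5)$ shows. I would therefore follow the blow-up argument of Lemma~\ref{lemma:dP3-D4}: blow up $Q$, compute the coefficient of the new exceptional curve, and exploit the line of $\mathrm{Supp}(D)$ passing near $Q$ (kept in the support by Remark~\ref{remark:convexity}) to bound its multiplicity and relaunch adjunction on the blow-up. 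The main obstacle is exactly this bookkeeping, and it is aggravated by the asymmetry of the line configuration — precisely one line meets $E_{1}$, one meets $E_{2}$, two meet $E_{4}$, and \emph{none} meets $E_{3}$ — so that the uniform bound $a_{i}\leqslant 1$ used in the $\mathbb{A}_{3}$ case is unavailable and one must instead track the disjunctive constraints coming from the several reduced anticanonical triples of lines (such as $L_{2}+L_{3}+L_{4}\sim-K_{S}$, giving $a_{2}\leqslant 1$ or $a_{4}\leqslant 1$).
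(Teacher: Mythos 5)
Your upper bound and your treatment of the interior points of the $E_{i}$ are correct and agree with the paper (which uses the mirror labelling: two lines meeting $E_{1}$, one meeting $E_{3}$, one meeting $E_{4}$, and the anticanonical member $2L_{3}+L_{4}$). The genuine gap is that the node cases $Q=E_{i}\cap E_{i+1}$ are never actually proved: you correctly observe that plain adjunction plus the chain inequalities does not suffice there, but you then only announce a plan (blow up $Q$ and rerun adjunction as in Lemma~\ref{lemma:dP3-D4}) without carrying it out. No blow-up is needed. The paper applies Remark~\ref{remark:convexity} to the two anticanonical divisors $L_{1}+L_{2}+L_{3}$ and $2L_{3}+L_{4}$ (in your labelling, $L_{2}+L_{3}+L_{4}$ and $L_{1}+2L_{2}$), whose log pairs with coefficients $1/2$ and $1/3$ are log canonical, to remove from $\mathrm{Supp}(D)$ either the line meeting $E_{3}$, or simultaneously a line meeting $E_{1}$ and the line meeting $E_{4}$; intersecting $\bar{D}$ with the corresponding $\bar{L}_{i}$ converts this into the disjunction ``$a_{3}\leqslant 1$, or ($a_{1}\leqslant 1$ and $a_{4}\leqslant 1$)'' (paper's labelling), which together with the chain inequalities kills each node in two lines. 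For instance at $E_{1}\cap E_{2}$ adjunction gives $2a_{1}>3$ and $2a_{2}>3+a_{3}$; the first excludes the branch $a_{1}\leqslant 1$, so $a_{3}\leqslant 1$, while concavity towards the right endpoint gives $a_{3}\geqslant\frac{2}{3}a_{2}$, whence $2a_{2}>3+\frac{2}{3}a_{2}$, $a_{2}>\frac{9}{4}$ and $a_{3}\geqslant\frac{3}{2}$ --- a contradiction; the other two nodes are analogous. You actually name the right ingredient in your final sentence ($L_{2}+L_{3}+L_{4}\sim-K_{S}$ giving $a_{2}\leqslant 1$ or $a_{4}\leqslant 1$), but you never combine it with the node inequalities, so the proof is incomplete. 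Note also that whether $Q$ lies on one of the $\bar{L}_{i}$ is irrelevant to the case analysis: only its position on the chain matters, since the lines enter the argument solely through the support constraints above.

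A second, smaller defect: the step ``raise the coefficient of $E_{i}$ to $1$'' in your interior-point argument requires $\lambda a_{i}\leqslant 1$, and at that stage your relations only give $a_{2},a_{3}\leqslant 18/5$, which need not be below $1/\lambda$ when $\lambda$ is close to $1/3$. The paper avoids this by establishing the support disjunction \emph{before} locating $Q$: either branch forces all $a_{i}\leqslant 3<1/\lambda$. You should reorder your argument accordingly.
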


\begin{proof}
We have $r=4$. Then $\bar{L}_{i}^{2}=-1$ and
$\bar{L}_{i}\cdot\bar{L}_{j}=0$ for $i\ne j$. We may assume~that
$$
\bar{L}_{1}\cdot E_{1}=\bar{L}_{2}\cdot E_{1}=\bar{L}_{3}\cdot E_{3}=\bar{L}_{4}\cdot E_{4}=1,%
$$
which implies that $\bar{L}_{3}\cdot E_{1}=\bar{L}_{3}\cdot
E_{2}=\bar{L}_{3}\cdot E_{4}=0$ and
$$
\bar{L}_{1}\cdot E_{2}=\bar{L}_{2}\cdot E_{2}=\bar{L}_{1}\cdot E_{3}=\bar{L}_{2}\cdot E_{3}=\bar{L}_{1}\cdot E_{4}=\bar{L}_{2}\cdot E_{4}=\bar{L}_{4}\cdot E_{1}=\bar{L}_{4}\cdot E_{2}=\bar{L}_{4}\cdot E_{3}=0.%
$$

The equivalence $-K_{S}\sim 2L_{3}+L_{4}$ holds. Similarly, we
have
$$
\bar{L}_{3}\equiv\alpha^{*}\big(L_{3}\big)-\frac{2}{5}E_{1}-\frac{4}{5}E_{2}-\frac{6}{5}E_{3}-\frac{3}{5}E_{4},\ \bar{L}_{4}\equiv\alpha^{*}\big(L_{4}\big)-\frac{1}{5}E_{1}-\frac{2}{5}E_{2}-\frac{3}{5}E_{3}-\frac{4}{5}E_{4},%
$$
which implies that $\mathrm{lct}(S)\leqslant 1/3$. Thus, we may
assume that $(S,\lambda D)$ is not log canonical, which implies
that $\mathrm{LCS}(S,\lambda D)=O$ by
Lemma~\ref{lemma:smooth-points-1-3}. Let $\bar{H}$ be a general
curve in $|-K_{\bar{S}}-\sum_{i=1}^{4}E_{i}|$.~Then
$$
3\geqslant a_{1}+a_{4},\ 2a_{1}\geqslant a_{2},\ 2a_{2}\geqslant a_{1}+a_{3},\ 2a_{3}\geqslant a_{2}+a_{4},\ 2a_{4}\geqslant a_{3},%
$$
because $\bar{H}\cdot\bar{D}\geqslant 0$,
$E_{1}\cdot\bar{D}\geqslant 0$, $E_{2}\cdot\bar{D}\geqslant 0$,
$E_{3}\cdot\bar{D}\geqslant 0$, $E_{4}\cdot\bar{D}\geqslant 0$,
respectively.

We have $-K_{S}\sim L_{1}+L_{2}+L_{3}$ and $-K_{S}\sim
2L_{3}+L_{4}$. But the log pairs
$$
\left(S,\ \frac{1}{2}\Big(L_{1}+L_{2}+L_{3}\Big)\right)\ \text{and}\ \left(S,\ \frac{1}{3}\Big(L_{4}+2L_{3}\Big)\right)%
$$
are log canonical. So, we may assume that either
$L_{3}\not\subseteq\mathrm{Supp}(D)$ or
$L_{1}\not\subseteq\mathrm{Supp}(D)\not\supseteq L_{4}$. But
$$
\bar{L}_{3}\cdot\bar{D}=1-a_{3},\ \bar{L}_{1}\cdot\bar{D}=1-a_{1},\ \bar{L}_{4}\cdot\bar{D}=1-a_{4},%
$$
which implies that there is a point $Q\in\cup_{i=1}^{4}E_{i}$ such
that $(\bar{S},\lambda(\bar{D}+\sum_{i=1}^{4}a_{i}E_{i}))$ is not
log canonical at the point $Q$. Arguing as in the proof of
Lemma~\ref{lemma:dP3-A3}, we see that
$$
\left\{%
\aligned
&Q\in E_{1}\setminus (E_{1}\cap E_{2})\Rightarrow 2a_{1}>a_{2}+3,\\%
&Q\in E_{1}\cap E_{2}\Rightarrow 2a_{1}>3\ \text{and}\ 2a_{2}>3+a_{3},\\%
&Q\in E_{2}\setminus \big((E_{1}\cap E_{2})\cup(E_{2}\cap E_{3})\big)\Rightarrow 2a_{2}>a_{1}+a_{3}+3,\\%
&Q\in E_{2}\cap E_{3}\Rightarrow 2a_{2}>3+a_{1}\ \text{and}\ 2a_{3}>3+a_{4},\\%
&Q\in E_{3}\setminus \big((E_{2}\cap E_{3})\cup(E_{3}\cap E_{4})\big)\Rightarrow 2a_{3}>3+a_{2}+a_{4},\\%
&Q\in E_{3}\cap E_{4}\Rightarrow 2a_{3}>3+a_{2}\ \text{and}\ 2a_{4}>3,\\%
&Q\in E_{4}\setminus (E_{4}\cap E_{3})\Rightarrow 2a_{4}>3,\\%
\endaligned\right.%
$$
which leads to a contradiction, because  either $a_{3}\leqslant 1$
or $a_{1}\leqslant 1$ and $a_{4}\leqslant 1$.
\end{proof}

\begin{lemma}
\label{lemma:dP3-A5} Suppose that $\Sigma=\mathbb{A}_{5}$. Then
$\mathrm{lct}(S)=1/4$.
\end{lemma}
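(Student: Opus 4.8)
The plan is to establish the two bounds $\mathrm{lct}(S)\leqslant 1/4$ and $\mathrm{lct}(S)\geqslant 1/4$ separately, taking Lemma~\ref{lemma:dP3-A4} as a template for the first and Lemma~\ref{lemma:dP3-D5} for the second.

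For the upper bound I would first pin down the lines $L_1,\ldots,L_r$ through $O$ and record, for each, which of the exceptional curves $E_1,\ldots,E_5$ of the minimal resolution $\alpha\colon\bar{S}\to S$ its proper transform meets. Inverting the $\mathbb{A}_5$ intersection matrix gives the multiplicities in $\bar{L}_i\equiv\alpha^{*}(L_i)-\sum_j c_{ij}E_j$: a line meeting the central curve $E_3$ has $(c_{i3})=(\tfrac12,1,\tfrac32,1,\tfrac12)$ and a line meeting $E_4$ has $(\tfrac13,\tfrac23,1,\tfrac43,\tfrac23)$. Choosing the anticanonical divisor $2L+L'$ with $L$ meeting $E_3$ and $L'$ meeting $E_4$ (the analogue of $2L_3+L_4$ in Lemma~\ref{lemma:dP3-A4}), its total transform has multiplicity $2\cdot\tfrac32+1=4$ along $E_3$, so that $(S,\tfrac14(2L+L'))$ is log canonical but not log terminal; hence $\mathrm{lct}(S)\leqslant 1/4$.

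For the lower bound, fix $\lambda<1/4$ and assume $(S,\lambda D)$ is not log canonical. Since $\lambda<1/3$, Lemma~\ref{lemma:smooth-points-1-3} gives $\mathrm{LCS}(S,\lambda D)\subseteq\Sigma=\{O\}$, so $\mathrm{LCS}(S,\lambda D)=\{O\}$. As in Lemma~\ref{lemma:dP3-D5} I would invoke~\cite{BW79} to produce a line $L\subset S$ with $O\notin L$, and then project from $L$ to obtain a conic $C$ with $-K_S\sim C+L$, $O\notin C$, and $C\cdot L=2$ at a single point $P$. Because $C$ and $L$ are smooth branches with contact order $2$ at the smooth point $P$, the log canonical threshold of the pair $(S,C+L)$ at $P$ equals $3/4$, so $P\in\mathrm{LCS}(S,\tfrac34(C+L))$. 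Consequently
$$
\{O,P\}\subseteq\mathrm{LCS}\Big(S,\ \tfrac34\big(C+L\big)+\lambda D\Big)\subseteq\{O\}\cup C\cup L,
$$
and since $O\notin C\cup L$ this locus is disconnected, whereas $-(K_S+\tfrac34(C+L)+\lambda D)\equiv(\tfrac14-\lambda)(-K_S)$ is ample. This contradicts Remark~\ref{remark:connectedness}.

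The step I expect to be the main obstacle is securing the geometric inputs. For the upper bound one must check that lines realising the required contact with $E_3$ and with $E_4$ genuinely occur on the $\mathbb{A}_5$ cubic, i.e.\ that the central multiplicity $4$ is attained by an honest anticanonical divisor; this rests on the explicit line configuration in~\cite{BW79}. For the lower bound the subtle point is the existence of a tangent conic: I need a fibre of the projection from $L$ meeting $L$ at a single point $P\neq O$ with $O\notin C$, so that the contact order is exactly $2$ and the points $O$, $P$ fall into distinct connected components of $\mathrm{LCS}$. Should such a conic fail to exist, I would fall back on the resolution analysis of Lemma~\ref{lemma:dP3-A4}: on $\bar{S}$ one has $\sum_{j=1}^{5}\bar{D}\cdot E_j=a_1+a_5\leqslant 3<1/\lambda$, so $\bar{D}\cdot E_j<1/\lambda$ for every $j$ excludes a non--log-canonical point in the interior of any $E_j$; the remaining nodes $E_j\cap E_{j+1}$ force $a_j>2$ by adjunction, contradicting the bound $a_j\leqslant 2$ that follows from the concavity relations $2a_j\geqslant a_{j-1}+a_{j+1}$ after convexity (Remark~\ref{remark:convexity}) is used to arrange $a_1,a_3,a_5\leqslant 1$.
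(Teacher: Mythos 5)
Both of your primary routes break on the actual geometry of the $\mathbb{A}_{5}$ cubic, and in each case the failure is precisely at the point you flagged as the main obstacle. By \cite{BW79} this surface contains only three lines, all passing through $O$, and their proper transforms meet $E_{1}$, $E_{1}$ and $E_{4}$; no line meets the central curve $E_{3}$. Hence the anticanonical divisor $2L+L'$ with $L$ touching $E_{3}$ does not exist. The bound $\mathrm{lct}(S)\leqslant 1/4$ is instead obtained from $-K_{S}\sim 3L_{3}$ with $\bar{L}_{3}\cdot E_{4}=1$: since $\bar{L}_{3}\equiv\alpha^{*}(L_{3})-\frac{1}{3}E_{1}-\frac{2}{3}E_{2}-E_{3}-\frac{4}{3}E_{4}-\frac{2}{3}E_{5}$, the divisor $3L_{3}$ acquires multiplicity $4$ along $E_{4}$. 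Likewise, because every line passes through $O$, there is no line $L$ with $O\not\in L$, so the $\mathbb{D}_{5}$-style projection-plus-connectedness argument cannot even start; that trick works for $\mathbb{D}_{5}$ only because there $r=2$ while the surface carries a third line off the singular point.

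Your fallback is essentially the paper's actual proof, and the observation that $\sum_{j}\bar{D}\cdot E_{j}=a_{1}+a_{5}\leqslant 3<1/\lambda$ disposes of all interior points of the $E_{j}$ in one stroke is a cleaner packaging than the case list in the paper. But the convexity step is misstated: Remark~\ref{remark:convexity} only lets you strip from $\mathrm{Supp}(D)$ one component of an effective anticanonical divisor $B$ with $(S,\lambda B)$ log canonical, and since no line meets $E_{3}$ or $E_{5}$, and the residual conics of planes through $L_{1}$, $L_{2}$ move in pencils, there is no way to force $a_{1},a_{3},a_{5}\leqslant 1$. What you can force is $L_{3}\not\subseteq\mathrm{Supp}(D)$, whence $1-a_{4}=\bar{L}_{3}\cdot\bar{D}\geqslant 0$ gives $a_{4}\leqslant 1$; the concavity relations $2a_{j}\geqslant a_{j-1}+a_{j+1}$ then yield $a_{1}\leqslant 5/2$, $a_{2}\leqslant 2$, $a_{3}\leqslant 3/2$, $a_{5}\leqslant 5/4$, and these bounds, fed into the node inequalities (each $Q=E_{j}\cap E_{j+1}$ forces $a_{j}>2$ and $a_{j+1}>2$) and the interior-point inequalities, produce the contradiction. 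So the skeleton of your fallback is sound, but both the upper-bound divisor and the mechanism for bounding the $a_{j}$ must be replaced as above.
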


\begin{proof}
We have $r=3$. We may assume~that $\bar{L}_{1}\cdot
E_{1}=\bar{L}_{2}\cdot E_{1}=\bar{L}_{3}\cdot E_{4}=1$. Then
$$
\bar{L}_{1}\cdot E_{2}=\bar{L}_{2}\cdot E_{2}=\bar{L}_{1}\cdot E_{3}=\bar{L}_{2}\cdot E_{3}=\bar{L}_{1}\cdot E_{4}=\bar{L}_{2}\cdot E_{4}=\bar{L}_{1}\cdot E_{5}=\bar{L}_{2}\cdot E_{3}=0%
$$
and $\bar{L}_{3}\cdot E_{1}=\bar{L}_{3}\cdot
E_{2}=\bar{L}_{3}\cdot E_{3}=\bar{L}_{3}\cdot E_{5}=0$. But
$-K_{S}\sim 3L_{3}$. Then $\mathrm{lct}(S)\leqslant 1/4$, because
$$
\bar{L}_{3}\equiv\alpha^{*}\big(L_{3}\big)-\frac{1}{3}E_{1}-\frac{2}{3}E_{2}-E_{3}-\frac{4}{3}E_{4}-\frac{2}{3}E_{5}.
$$

We may assume that $(S,\lambda D)$ is not log canonical. Then
$\mathrm{LCS}(S,\lambda D)=O$. by
Lemma~\ref{lemma:smooth-points-1-3}.

Let $\bar{H}$ be a proper transform on $\bar{S}$ of a general
hyperplane section that contains $O$. Then
\begin{equation}
\label{equation:A5-simple}
3\geqslant a_{1}+a_{5},\ 2a_{1}\geqslant a_{2},\ 2a_{2}\geqslant a_{1}+a_{3},\ 2a_{3}\geqslant a_{2}+a_{4},\ 2a_{4}\geqslant a_{3}+a_{5},\ 2a_{5}\geqslant a_{4},%
\end{equation}
because $\bar{H}\cdot\bar{D}\geqslant 0$,
$E_{1}\cdot\bar{D}\geqslant 0$, $E_{2}\cdot\bar{D}\geqslant 0$,
$E_{3}\cdot\bar{D}\geqslant 0$, $E_{4}\cdot\bar{D}\geqslant 0$,
$E_{5}\cdot\bar{D}\geqslant 0$, respectively.

We may assume that $L_{3}\not\subseteq\mathrm{Supp}(D)$ due to
Remark~\ref{remark:convexity}. Then
$1-a_{4}=\bar{L}_{3}\cdot\bar{D}\geqslant 0$, which~easily implies
that $a_{1}\leqslant 5/2$, $a_{2}\leqslant 2$, $a_{3}\leqslant
3/2$, $a_{4}\leqslant 1$, $a_{5}\leqslant 5/4$.

There is a point $Q\in \cup_{i=1}^{5}E_{i}$ such that the log pair
$(\bar{S},\lambda(\bar{D}+\sum_{i=1}^{5}a_{i}E_{i}))$ is~not log
canonical at the point $Q$. Arguing as in the proof of
Lemma~\ref{lemma:dP3-A3}, we see that
\begin{equation}
\label{equation:A5}
\left\{%
\aligned
&Q\in E_{1}\setminus (E_{1}\cap E_{2})\Rightarrow 2a_{1}>a_{2}+4,\\%
&Q\in E_{1}\cap E_{2}\Rightarrow 2a_{1}>4\ \text{and}\ 2a_{2}>4+a_{3},\\%
&Q\in E_{2}\setminus \big((E_{1}\cap E_{2})\cup(E_{2}\cap E_{3})\big)\Rightarrow 2a_{2}>a_{1}+a_{3}+4,\\%
&Q\in E_{2}\cap E_{3}\Rightarrow 2a_{2}>4+a_{1}\ \text{and}\ 2a_{3}>4+a_{4},\\%
&Q\in E_{3}\setminus \big((E_{2}\cap E_{3})\cup(E_{3}\cap E_{4})\big)\Rightarrow 2a_{3}>4+a_{2}+a_{4},\\%
&Q\in E_{3}\cap E_{4}\Rightarrow 2a_{3}>4+a_{2}\ \text{and}\ 2a_{4}>4+a_{5},\\%
&Q\in E_{4}\setminus \big((E_{3}\cap E_{4})\cup(E_{4}\cap E_{5})\big)\Rightarrow 2a_{4}>4+a_{3}+a_{5},\\%
&Q\in E_{4}\cap E_{5}\Rightarrow 2a_{4}>4+a_{3}\ \text{and}\ 2a_{5}>4,\\%
&Q\in E_{5}\setminus (E_{4}\cap E_{5})\Rightarrow 2a_{5}>a_{4}+4.\\%
\endaligned\right.%
\end{equation}

Now taking into account the inequalities~\ref{equation:A5-simple},
the inequalities~\ref{equation:A5}, the~inequality $a_{4}\leqslant
4$, and the~inequality $a_{1}+a_{5}\leqslant 3$, we see that
either $Q=E_{3}\cap E_{4}$ or $Q=E_{4}\cap E_{5}$.

Let $H_{1}$ and $H_{3}$ be general curves in $|-K_{S}|$ that
contain $L_{1}$ and $L_{3}$, respectively. Then
$$
H_{1}=L_{1}+C_{1},\ H_{3}=L_{3}+C_{3},
$$
where $C_{1}$ and $C_{3}$ are irreducible conics such that
$C_{1}\not\subseteq\mathrm{Supp}(D)\not\supseteq C_{3}$.

Let $\bar{C}_{1}$ and $\bar{C}_{3}$ be the proper transforms of
$C_{1}$ and $C_{3}$ on the surface $\bar{S}$, respectively. Then
$$
\bar{C}_{1}\cdot E_{1}=\bar{C}_{1}\cdot E_{2}=\bar{C}_{1}\cdot E_{3}=\bar{C}_{1}\cdot E_{4}=\bar{C}_{3}\cdot E_{1}=\bar{C}_{3}\cdot E_{3}=\bar{C}_{3}\cdot E_{4}=\bar{C}_{3}\cdot E_{5}=0%
$$
and $\bar{C}_{1}\cdot E_{5}=\bar{C}_{3}\cdot E_{2}=1$. Therefore,
we see that
$$
0\leqslant \bar{C}_{1}\cdot\bar{D}=2-a_{5},\ 2-a_{2}=\bar{C}_{3}\cdot\bar{D}\geqslant 0,%
$$
which implies that $a_{2}\leqslant 2$ and $a_{5}\leqslant 2$. Now
we can easily obtain a contradiction.
\end{proof}

\begin{lemma}
\label{lemma:dP3-A5-A1} Suppose that $\Sigma=\{\mathrm{A}_{1},
\mathbb{A}_{5}\}$. Then $\mathrm{lct}(S)=1/4$.
\end{lemma}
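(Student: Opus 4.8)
The plan is to prove $\mathrm{lct}(S)=1/4$ for a cubic surface with $\Sigma=\{\mathbb{A}_1,\mathbb{A}_5\}$, following closely the template established in the proofs of Lemmas~\ref{lemma:dP3-A5}, \ref{lemma:dP3-A3}, and \ref{lemma:dP3-A4}. The worst singular point is $O$, the $\mathbb{A}_5$ point, which has the minimal resolution contributing the chain $E_1,\ldots,E_5$ of $(-2)$-curves. First I would establish the upper bound $\mathrm{lct}(S)\leqslant 1/4$: since $\Sigma\supseteq\{\mathbb{A}_5\}$, there should be a line $L_3$ through $O$ with $-K_S\sim 3L_3$ (or the analogous relation), so that the proper transform $\bar L_3$ on $\bar S$ has discrepancy coefficients along the $E_i$ forcing $(S,\tfrac14\cdot 3L_3)$ to be log canonical but not log terminal. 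I would compute $\bar L_3\equiv\alpha^*(L_3)-\sum c_iE_i$ exactly as in Lemma~\ref{lemma:dP3-A5} to pin down the coefficients and read off the bound.

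Next I would assume for contradiction that $(S,\lambda D)$ is not log canonical for some $\lambda<1/4$ and $D\equiv -K_S$. The opening moves mirror Lemma~\ref{lemma:dP3-A5}: first rule out that $D$ contains a line or curve $Z$ with large multiplicity $\mu\geqslant 1/\lambda$ by intersecting with a general conic $C$ and using $-K_S\cdot D=3$, and then invoke Lemma~\ref{lemma:smooth-points-1-3} together with Remark~\ref{remark:connectedness} and Lemma~\ref{lemma:smooth-points-LCS} to conclude $\mathrm{LCS}(S,\lambda D)=O$. The presence of a \emph{second} singular point, the $\mathbb{A}_1$ point $P\ne O$, is the new feature here: I would use the line(s) through $P$ and the connectedness of $\mathrm{LCS}$ to show that the log-canonical violation cannot migrate to $P$, so attention really does concentrate at $O$.

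I would then pass to $\bar S$ and extract the basic numerical constraints on the coefficients $a_1,\ldots,a_5$ from $\bar H\cdot\bar D\geqslant 0$ and $E_i\cdot\bar D\geqslant 0$, exactly reproducing the system~\eqref{equation:A5-simple}. Remark~\ref{remark:convexity} lets me assume various lines (such as $L_3$) are not in $\mathrm{Supp}(D)$, yielding the upper bounds $\bar L_i\cdot\bar D=1-a_{j}\geqslant 0$. The existence of a point $Q\in\cup E_i$ where log-canonicity fails then produces, via Remark~\ref{remark:adjunction} applied at each stratum $E_i\setminus(E_i\cap E_{i\pm1})$ and each node $E_i\cap E_{i+1}$, the full case list~\eqref{equation:A5} of strict inequalities $2a_i>4+\cdots$. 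Combining these with the bounds from~\eqref{equation:A5-simple} and the convexity-derived inequalities should localize $Q$ to the deepest part of the chain, after which intersecting with the proper transforms $\bar C_1,\bar C_3$ of the residual conics (as in the final paragraph of Lemma~\ref{lemma:dP3-A5}) yields $a_2,a_5\leqslant 2$ and a numerical contradiction.

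The main obstacle I anticipate is bookkeeping rather than conceptual: correctly computing the intersection numbers $\bar L_i\cdot E_j$, $\bar C_i\cdot E_j$, and the discrepancy coefficients $c_i$ for the $\mathbb{A}_5$ chain, and verifying that the extra $\mathbb{A}_1$ point genuinely forces $\mathrm{LCS}(S,\lambda D)=O$ rather than opening a second locus at $P$. The hard part will be ensuring that the large system of inequalities in~\eqref{equation:A5}, intersected with the convexity bounds and the two-conic estimates, closes off \emph{every} admissible position of $Q$ without a surviving case; I would handle this by treating the endpoints $E_1,E_5$ and the central nodes separately and leaning on the asymmetry introduced by $-K_S\sim 3L_3$ to eliminate the borderline configurations.
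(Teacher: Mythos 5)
Your proposal follows the paper's proof essentially step for step: the upper bound coming from $-K_{S}\sim 3L$ for the line through the $\mathbb{A}_{5}$ point together with the explicit coefficients of its proper transform, the reduction $\mathrm{LCS}(S,\lambda D)\subseteq\Sigma$ via Lemma~\ref{lemma:smooth-points-1-3}, the exclusion of the $\mathbb{A}_{1}$ point, and then the system~\ref{equation:A5-simple}, the case list~\ref{equation:A5}, and the two residual conics giving $a_{1}\leqslant 2$, $a_{5}\leqslant 2$ to eliminate every admissible position of $Q$. The one place where your stated tool is too weak is the exclusion of the $\mathbb{A}_{1}$ point $P$: connectedness cannot rule out $\mathrm{LCS}(S,\lambda D)=\{P\}$ (a single point is connected), and the paper instead blows up $P$ and plays the bound $m\leqslant 3/2$, obtained from intersecting $\breve{D}\equiv\beta^{*}(-K_{S})-mF$ with a general curve in $|-K_{\breve{S}}-F|$, against the lower bound $m>1/(2\lambda)>2$ supplied by Remark~\ref{remark:A1}.
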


\begin{proof}
Let $P$ be a point in $\Sigma$ of type $\mathbb{A}_{1}$. Then
$r=2$. We may assume that $P\in L_{1}$. Then
$$
\bar{L}_{2}\cdot E_{1}=\bar{L}_{2}\cdot E_{2}=\bar{L}_{2}\cdot E_{3}=\bar{L}_{2}\cdot E_{5}=\bar{L}_{1}\cdot E_{2}=\bar{L}_{1}\cdot E_{3}=\bar{L}_{1}\cdot E_{4}=\bar{L}_{1}\cdot E_{5}=0,%
$$
and $\bar{L}_{1}\cdot E_{1}=\bar{L}_{2}\cdot E_{4}=1$. The
equivalence $-K_{S}\sim 3L_{2}$ holds. Then
$\mathrm{lct}(S)\leqslant 1/4$, because
$$
\bar{L}_{2}\equiv\alpha^{*}\big(L_{2}\big)-\frac{1}{3}E_{1}-\frac{2}{3}E_{2}-E_{3}-\frac{4}{3}E_{4}-\frac{2}{3}E_{5}.
$$

We may assume that $(S,\lambda D)$ is not log canonical. Then
$\mathrm{LCS}(S,\lambda D)\subseteq \{O, P\}$ by
Lemma~\ref{lemma:smooth-points-1-3}.

Suppose that $(S,\lambda D)$ is not log terminal at $P$. Let
$\beta\colon\breve{S}\to S$ be a blow up of $P$. Then
$$
\breve{D}\equiv\beta^{*}\big(-K_{S}\big)-mF,
$$
where $F$ is the $\beta$-exceptional curve, $\breve{D}$ is the
proper transform of the divisor $D$, and $m\in\mathbb{Q}$.~Then
$$
0\leqslant\breve{H}\cdot\breve{D}=\Big(\beta^{*}\big(-K_{S}\big)-mF\Big)\cdot\Big(\beta^{*}\big(-K_{S}\big)-F\Big)=3-2m,
$$
where $\breve{H}$ is general curve in $|-K_{\breve{S}}-F|$. Thus,
we have $m\leqslant 3/2$. But $m>2$ by Remark~\ref{remark:A1}.

We see that $\mathrm{LCS}(S,\lambda D)=O$. Let $C_{1}$ and $C_{2}$
be general conics on the surface $S$ such that
$$
L_{1}+C_{1}\sim L_{2}+C_{2}\sim -K_{S},
$$
and let $\bar{C}_{1}$ and $\bar{C}_{2}$ be the proper transforms
of $C_{1}$ and $C_{2}$ on the surface $\bar{S}$, respectively.
Then
$$
2-a_{1}=\bar{C}_{1}\cdot \bar{D}\geqslant 0,\ 2-a_{5}=\bar{C}_{2}\cdot \bar{D}\geqslant 0,%
$$
because $C_{1}\not\subseteq\mathrm{Supp}(D)\not\supseteq C_{2}$.
We may assume that $L_{2}\not\subseteq\mathrm{Supp}(D)$ due to
Remark~\ref{remark:convexity}.

Arguing as in the proof of Lemma~\ref{lemma:dP3-A5}, we obtain the
inequalities
$$
3\geqslant a_{1}+a_{5},\ 2a_{1}\geqslant a_{2},\ 2a_{2}\geqslant a_{1}+a_{3},\ 2a_{3}\geqslant a_{2}+a_{4},\ 2a_{4}\geqslant a_{3}+a_{5},\ 2a_{5}\geqslant a_{4},\ 2\geqslant a_{2},\ 2\geqslant a_{5},\ 1\geqslant a_{4},%
$$
which imply that there is a point $Q\in \cup_{i=1}^{5}E_{i}$ such
that $(\bar{S},\lambda(\bar{D}+\sum_{i=1}^{5}a_{i}E_{i}))$ is not
log canonical at the point $Q$. Arguing as in the proof of
Lemma~\ref{lemma:dP3-A3}, we obtain a contradiction.
\end{proof}

\begin{lemma}
\label{lemma:dP3-A4-A1} Suppose that $\Sigma=\{\mathrm{A}_{1},
\mathbb{A}_{4}\}$. Then $\mathrm{lct}(S)=1/3$.
\end{lemma}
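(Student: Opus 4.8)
The plan is to establish $\mathrm{lct}(S)=1/3$ for a cubic surface whose singularities are exactly one $\mathbb{A}_1$ point and one $\mathbb{A}_4$ point, following the template that worked in Lemmas~\ref{lemma:dP3-A4} and~\ref{lemma:dP3-A5-A1}. Let $O$ be the $\mathbb{A}_4$ point (the worst singularity) and $P$ the $\mathbb{A}_1$ point. First I would produce an explicit $\mathbb{Q}$-divisor showing $\mathrm{lct}(S)\leqslant 1/3$: resolving $O$ gives curves $E_1,\dots,E_4$ with the $\mathbb{A}_4$ chain, and using the equivalence $-K_S\sim 2L_j+L_{j'}$ for an appropriate line $L_j$ through $O$ together with the proper-transform computation $\bar{L}_j\equiv\alpha^*(L_j)-\sum_i c_i E_i$, the coefficients $c_i$ force a log canonical but not log terminal pair at $\lambda=1/3$, exactly as in Lemma~\ref{lemma:dP3-A4}. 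So it remains to rule out $\lambda<1/3$ with $(S,\lambda D)$ not log canonical.

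Next I would localize the log canonical center. By Lemma~\ref{lemma:smooth-points-1-3} we have $\mathrm{LCS}(S,\lambda D)\subseteq\Sigma=\{O,P\}$. To discard the point $P$, blow it up by $\beta\colon\breve{S}\to S$ with exceptional curve $F$; writing $\breve{D}\equiv\beta^*(-K_S)-mF$ and intersecting with a general $\breve{H}\in|-K_{\breve{S}}-F|$ gives $0\leqslant\breve{H}\cdot\breve{D}=3-2m$, so $m\leqslant 3/2$, while Remark~\ref{remark:A1} would force $m>2$ if $P$ were a center. This contradiction yields $\mathrm{LCS}(S,\lambda D)=O$, precisely as in the $\mathbb{A}_1$-discarding step of Lemma~\ref{lemma:dP3-A5-A1}.

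The core is then the same numerical analysis carried out on the partial resolution $\bar{S}$ of the $\mathbb{A}_4$ point. I would record $\bar{D}\equiv\alpha^*(D)-\sum_{i=1}^4 a_iE_i$, derive the nef-cone inequalities $3\geqslant a_1+a_4$, $2a_1\geqslant a_2$, $2a_2\geqslant a_1+a_3$, $2a_3\geqslant a_2+a_4$, $2a_4\geqslant a_3$ from $\bar{H}\cdot\bar{D}\geqslant 0$ and $E_i\cdot\bar{D}\geqslant 0$, and then extract upper bounds by intersecting with proper transforms of lines and of the conics $C$ cut out by hyperplane sections through the two singular points. Using a line $L$ and conic $C_2$ avoiding $\mathrm{Supp}(D)$ (after applying Remark~\ref{remark:convexity}) produces bounds such as $a_4\leqslant 1$ and $a_i\leqslant 2$. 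Since there is a point $Q\in\bigcup E_i$ where $(\bar{S},\lambda(\bar{D}+\sum a_iE_i))$ fails to be log canonical, Remark~\ref{remark:adjunction} applied case-by-case (as in the displayed case list of Lemma~\ref{lemma:dP3-A4}) yields for each possible location of $Q$ a strict lower bound like $2a_k>3+(\text{neighbor terms})$.

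The main obstacle, and the step I would allocate the most care to, is the final bookkeeping: I must combine the chain inequalities, the convexity-derived upper bounds, and the adjunction lower bounds to reach a contradiction for \emph{every} position of $Q$ among the five strata $E_i\setminus(\text{nodes})$ and the four intersection points $E_i\cap E_{i+1}$. The $\mathbb{A}_1$ point enters only by supplying an extra favorable conic relation (analogous to the $C_1,C_2$ used in Lemma~\ref{lemma:dP3-A5}) that tightens one of the $a_i$ bounds enough to close the remaining cases; identifying which conic through $P$ and $O$ gives the decisive estimate is the delicate part. I expect the arithmetic to parallel Lemma~\ref{lemma:dP3-A4} closely, so the contradiction should emerge from $a_4\leqslant 1$ clashing with the adjunction inequality $2a_4>3$ forced when $Q$ lies on $E_3\cap E_4$ or $E_4$.
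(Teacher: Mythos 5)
Your proposal follows the same route as the paper's proof: establish $\mathrm{lct}(S)\leqslant 1/3$ via $-K_{S}\sim L_{2}+2L_{3}$ and the proper-transform coefficients, discard the $\mathbb{A}_{1}$ point by the blow-up argument of Lemma~\ref{lemma:dP3-A5-A1} (the $m\leqslant 3/2$ versus $m>2$ contradiction), and then run the $\mathbb{A}_{4}$-chain analysis of Lemma~\ref{lemma:dP3-A4} at $O$; the paper's proof is exactly this, stated by reference to those two lemmas. The only slight divergence is that the decisive upper bounds on the $a_{i}$ come from the lines through $O$ (giving either $a_{3}\leqslant 1$ or $a_{1}\leqslant 1$ and $a_{4}\leqslant 1$ after Remark~\ref{remark:convexity}) rather than from a conic through $P$ and $O$, but this does not change the substance of the argument.
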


\begin{proof}
We have $r=3$. Let $P$ be a point in $\Sigma$ of type
$\mathbb{A}_{1}$. We may assume that $P\in L_{1}$. Then
$$
\bar{L}_{1}\cdot E_{1}=1,\ \bar{L}_{1}\cdot E_{2}=\bar{L}_{1}\cdot E_{3}=\bar{L}_{1}\cdot E_{4}=0,%
$$
and we may assume~that $\bar{L}_{3}\cdot E_{3}=\bar{L}_{2}\cdot
E_{4}=1$. Then
$$
\bar{L}_{3}\cdot E_{1}=\bar{L}_{3}\cdot E_{2}=\bar{L}_{3}\cdot E_{4}=\bar{L}_{2}\cdot E_{1}=\bar{L}_{2}\cdot E_{2}=\bar{L}_{2}\cdot E_{3}=0.%
$$

The equivalence $-K_{S}\sim L_{2}+2L_{3}$ holds. But
$$
\bar{L}_{2}\equiv\alpha^{*}\big(L_{2}\big)-\frac{1}{5}E_{1}-\frac{2}{5}E_{2}-\frac{3}{5}E_{3}-\frac{4}{5}E_{4},\ \bar{L}_{3}\equiv\alpha^{*}\big(L_{3}\big)-\frac{2}{5}E_{1}-\frac{4}{5}E_{2}-\frac{6}{5}E_{3}-\frac{3}{5}E_{4},%
$$
which implies that $\mathrm{lct}(S)\leqslant 1/3$. Thus, we may
assume that $(S,\lambda D)$ is not log canonical.

We may assume that either $L_{3}\not\subseteq\mathrm{Supp}(D)$ or
$L_{1}\not\subseteq\mathrm{Supp}(D)\not\supseteq L_{2}$ (see
Remark~\ref{remark:convexity}).

Arguing as in the proof of Lemma~\ref{lemma:dP3-A5-A1}, we see
that the log pair $(S,\lambda D)$ is log canonical outside of the
point $O$. Now arguing as in the proof of
Lemma~\ref{lemma:dP3-A4}, we obtain a contradiction.
\end{proof}

\begin{lemma}
\label{lemma:dP3-A3-A1}  Suppose that $\Sigma=\{\mathrm{A}_{1},
\mathbb{A}_{3}\}$. Then $\mathrm{lct}(S)=1/2$.
\end{lemma}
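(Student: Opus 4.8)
The plan is to follow the two–step pattern of the preceding lemmas: first exhibit an effective anticanonical divisor witnessing $\mathrm{lct}(S)\leqslant 1/2$, and then prove that $(S,\lambda D)$ is log canonical for every $\lambda<1/2$ and every effective $D\equiv -K_{S}$. For the upper bound I would let $O$ be the singular point of type $\mathbb{A}_{3}$ and take the partial resolution $\alpha\colon\bar{S}\to S$ over $O$ contracting the chain $E_{1}$–$E_{2}$–$E_{3}$ of $(-2)$-curves. Exactly as in Lemma~\ref{lemma:dP3-A3} there is a tritangent hyperplane section $L_{1}+L_{2}+L_{3}\sim -K_{S}$ whose components are lines through $O$; the usual intersection bookkeeping gives $\bar{L}_{1}+\bar{L}_{2}+\bar{L}_{3}\equiv\alpha^{*}(-K_{S})-2E_{1}-2E_{2}-E_{3}$, so the coefficient $2$ on $E_{1}$ shows that $(S,\frac{1}{2}(L_{1}+L_{2}+L_{3}))$ is log canonical but not log terminal, whence $\mathrm{lct}(S)\leqslant 1/2$.

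For the lower bound, fix $\lambda<1/2$ and suppose $(S,\lambda D)$ is not log canonical. First I would exclude a highly singular irreducible curve exactly as in Lemma~\ref{lemma:dP3-A3}: if $D=\mu Z+\Omega$ with $\mu\geqslant 1/\lambda>2$, then $3=-K_{S}\cdot D\geqslant\mu\,\mathrm{deg}(Z)>2\,\mathrm{deg}(Z)$ forces $Z$ to be a line, and intersecting with a general conic $C$ with $Z+C\sim -K_{S}$ gives $2=C\cdot D\geqslant\mu\,(C\cdot Z)>2\,(C\cdot Z)$, which is incompatible with $C\cdot Z\geqslant 1$. Hence $\mathrm{LCS}(S,\lambda D)$ is finite, so by Lemma~\ref{lemma:smooth-points-LCS} it lies in $\Sigma=\{O,P\}$, where $P$ is the $\mathbb{A}_{1}$ point; connectedness (Remark~\ref{remark:connectedness}) then forces it to be exactly one of these two points.

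The main obstacle is excluding the $\mathbb{A}_{1}$ point $P$. The device used in Lemmas~\ref{lemma:dP3-A5-A1} and \ref{lemma:dP3-A4-A1} breaks down here: blowing up $P$ crepantly gives $\breve{D}\equiv\beta^{*}(-K_{S})-mF$ with $F^{2}=-2$, and a general $\breve{H}\in|-K_{\breve{S}}-F|$ yields $0\leqslant\breve{H}\cdot\breve{D}=3-2m$, so $m\leqslant 3/2$, while Remark~\ref{remark:A1} only gives $m>1/(2\lambda)>1$. For the thresholds $1/4$ and $1/3$ these two estimates collide, but at the threshold $1/2$ they merely yield $1<m\leqslant 3/2$ and no contradiction. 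I would therefore import the finer $\mathbb{A}_{1}$-point analysis of Lemma~\ref{lemma:dP3-A1-many}, which is tailored precisely to the value $1/2$: writing $D=aL+\Upsilon$ for the lines $L$ through $P$ and studying the unique (possibly reducible) conic through $O$, $P$ and the offending point $Q\in F$, the adjunction estimates of Remark~\ref{remark:adjunction} along these curves produce the incompatible inequalities $a>1/2$ and $a<1/2$ (respectively $e<0$ in the irreducible-conic case). This rules out $P$ and leaves $\mathrm{LCS}(S,\lambda D)=\{O\}$.

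Finally, with the locus concentrated at the $\mathbb{A}_{3}$ point $O$, I would reproduce the endgame of Lemma~\ref{lemma:dP3-A3}. Writing $\bar{D}\equiv\alpha^{*}(D)-\sum_{i}a_{i}E_{i}$, the nef intersections with $\bar{H}$ and with $E_{1},E_{2},E_{3}$ give $a_{1}+a_{3}\leqslant 3$, $2a_{1}\geqslant a_{2}$, $2a_{2}\geqslant a_{1}+a_{3}$ and $2a_{3}\geqslant a_{2}$, while convexity (Remark~\ref{remark:convexity}) applied to the lines $L_{i}\not\subseteq\mathrm{Supp}(D)$ supplies the bounds $a_{1}\leqslant 1$ or $a_{2}\leqslant 1$, and $a_{3}\leqslant 1$ or $a_{2}\leqslant 1$. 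The case analysis for the location of $Q$ on $E_{1}\cup E_{2}\cup E_{3}$, via Remark~\ref{remark:adjunction}, then contradicts these inequalities, completing the proof.
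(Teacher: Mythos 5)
Your skeleton matches the paper's: establish $\mathrm{lct}(S)\leqslant 1/2$ with an anticanonical divisor of lines, rule out curves and smooth points in $\mathrm{LCS}(S,\lambda D)$, reduce by connectedness to a single point of $\Sigma=\{O,P\}$, exclude the $\mathbb{A}_{1}$ point $P$, and finish at the $\mathbb{A}_{3}$ point $O$ as in Lemma~\ref{lemma:dP3-A3}. You also correctly diagnose the one genuinely new difficulty: at threshold $1/2$ the crude estimate $m\leqslant 3/2$ versus $m>1/(2\lambda)>1$ from Remark~\ref{remark:A1}, which suffices in Lemmas~\ref{lemma:dP3-A5-A1} and~\ref{lemma:dP3-A4-A1}, gives no contradiction.

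However, your resolution of that difficulty is where the gap lies. You propose to ``import'' the conic argument of Lemma~\ref{lemma:dP3-A1-many}, but you neither verify the geometric inputs that argument relies on (existence and irreducibility of a conic through $O$ and $P$ whose proper transform hits the offending point $Q$ on the exceptional curve over $P$, the self-intersections $\bar{L}^{2}=-1/2$ and $\bar{Z}^{2}=1/2$ used there, all of which depend on the singularity configuration and change when $O$ is of type $\mathbb{A}_{3}$), nor do the inequalities you quote transfer: the dichotomy $a>1/2$ versus $a<1/2$ comes from Lemma~\ref{lemma:dP3-A1-single} at threshold $2/3$, whereas at $\lambda<1/2$ the same adjunction computation yields $a>1/\lambda-1>1$, so the numerology must be redone. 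The paper instead exploits the specific line configuration of this surface: the two lines $L_{5},L_{6}$ through $P$ (besides $L_{1}$) and a line $L_{7}$ disjoint from both singular points that meets $L_{5}$ and $L_{6}$; intersecting $D$ with $L_{7}$ gives $\mu_{5}+\mu_{6}\leqslant 1$, and then adjunction on the blow-up of $P$ along the exceptional curve and along $\breve{L}_{5}$ forces $\mu_{5}>1$, a contradiction. Since excluding $P$ is the crux of this lemma and your treatment of it is an unverified analogy with mismatched constants, the proof as written is incomplete; the remaining steps (upper bound, exclusion of curves, and the endgame at $O$, modulo writing down the correct linear equivalences $-K_{S}\sim 2L_{1}+L_{2}\sim L_{2}+L_{3}+L_{4}$ needed for the convexity reductions) are essentially the paper's.
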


\begin{proof}
Let $P$ be a point in $\Sigma$ of type $\mathbb{A}_{1}$. We may
assume that $P\in L_{1}$. Then $r=4$, and~it~easily follows from
\cite{BW79} that the surface $S$ contains lines
$L_{5},L_{6},L_{7}$ such that
$$
L_{5}\ni P\in L_{6},\ O\not\in L_{7}\not\ni P,\ L_{3}\cap L_{5}\ne\varnothing,\ L_{4}\cap L_{6}\ne\varnothing,\ L_{7}\cap L_{2}\ne\varnothing,\ L_{7}\cap L_{5}\ne\varnothing,\ L_{7}\cap L_{6}\ne\varnothing,%
$$
which implies that $L_{7}\cap L_{1}=L_{7}\cap L_{3}=L_{7}\cap
L_{4}=\varnothing$.~Then $-K_{S}\sim L_{1}+L_{3}+L_{5}$ and
$$
L_{1}+L_{3}+L_{5}\sim L_{1}+L_{4}+L_{6}\sim L_{5}+L_{6}+L_{7}\sim L_{2}+2L_{1}\sim L_{2}+L_{3}+L_{4}\sim 2L_{2}+L_{7},%
$$
which implies that $\mathrm{lct}(S)\leqslant 1/2$. Hence, we may
assume that $(S,\lambda D)$ is not log canonical.

Put $D=\mu_{i} L_{i}+\Omega_{i}$, where $\mu_{i}$ is a
non-negative rational number, and $\Omega_{i}$ is an~effective
$\mathbb{Q}$-di\-vi\-sor, whose support does not contain the line
$L_{i}$. Let us show that that $\mu_{i}<1/\lambda$ for
$i=1,\ldots,7$.

Suppose that $\mu_{2}\geqslant 1/\lambda$. We may assume that
$L_{1}\not\subseteq\mathrm{Supp}(D)$ by
Remark~\ref{remark:convexity}. Then
$$
1=L_{1}\cdot D=L_{1}\cdot\big(\mu_{2}L_{2}+\Omega_{2}\big)\geqslant \mu_{2} L_{1}\cdot L_{2}=\mu_{2}/2>1,%
$$
which is a contradiction.  Similarly, we see that
$\mu_{i}<1/\lambda$ for $i=1,\ldots,7$.

Arguing as in the proof of Lemma~\ref{lemma:dP3-A1-single}, we see
that $\mathrm{LCS}(S,\lambda D)$ does not contain curves and
smooth points of the surface $S$. Then either
$\mathrm{LCS}(S,\lambda D)=O$ or $\mathrm{LCS}(S,\lambda D)=P$ by
Remark~\ref{remark:connectedness}.

Suppose that $\mathrm{LCS}(S,\lambda D)=P$.  Put
$$
D=\mu_{5}L_{5}+\mu_{6}L_{6}+\Upsilon,
$$
where $\Upsilon$ is an~effective $\mathbb{Q}$-divisor such that
$L_{5}\not\subseteq\mathrm{Supp}(\Upsilon)\not\supseteq L_{6}$.
Then $\mu_{5}>0$ and $\mu_{6}>0$.~But
$$
1=L_{7}\cdot
D=L_{7}\cdot\big(\mu_{5}L_{5}+\mu_{6}L_{6}+\Upsilon\big)\geqslant
L_{7}\cdot\big(\mu_{5}L_{5}+\mu_{6}L_{6}\big)=\mu_{5}+\mu_{6},
$$
because we may assume that
$L_{7}\not\subseteq\mathrm{Supp}(\Upsilon)$. Let
$\beta\colon\breve{S}\to S$ be a blow~up~of~the~point~$P$.~Then
$$
\mu_{5}\breve{L}_{5}+\mu_{6}\breve{L}_{6}+\breve{\Upsilon}\equiv\beta^{*}\big(\mu_{5}L_{5}+\mu_{6}L_{6}+\Upsilon\big)-\big(\mu_{5}/2+\mu_{6}/2+\epsilon\big)G,
$$
where $\epsilon$ is a rational number, $G$ is the exceptional
curve of $\beta$, and $\breve{L}_{5}$, $\breve{L}_{6}$,
$\breve{\Upsilon}$~are~proper transforms of the divisors $L_{5}$,
$L_{6}$, $\Upsilon$ on the surface $\breve{S}$, respectively. Then
$$
0\leqslant\Big(\mu_{5}\breve{L}_{5}+\mu_{6}\breve{L}_{6}+\breve{\Upsilon}\Big)\breve{H}=3-\mu_{5}-\mu_{6}-2\epsilon,
$$
where $\breve{H}$ is a general curve in $|-K_{\breve{S}}-G|$.
There is a point $Q\in G$ such that the singularities of the log
pair
$(\breve{S},\lambda(\mu_{5}\breve{L}_{5}+\mu_{6}\breve{L}_{6}+\breve{\Upsilon})+\lambda(\mu_{5}/2+\mu_{6}/2+\epsilon)G)$
are not log canonical at $Q$. We have
$$
2-2\epsilon=\breve{\Upsilon}\cdot\big(\breve{L}_{5}+\breve{L}_{6}\big)\geqslant 0,%
$$
which implies that $\epsilon\leqslant 1$. Then
$2\epsilon=\breve{\Omega}\cdot G>2$ in the case when
$Q\not\in\breve{L}_{5}\cup\breve{L}_{6}$ by
Remark~\ref{remark:adjunction}, which implies that we may assume
that $Q\in\breve{L}_{5}$. Then
$$
1+\mu_{5}/2-\mu_{6}-\epsilon=\breve{\Omega}\cdot\breve{L}_{5}>2-\mu_{5}/2-\mu_{6}/2-\epsilon,%
$$
due to Remark~\ref{remark:adjunction}. Thus, we see that
$\mu_{5}>1$. But $\mu_{5}\leqslant\mu_{5}+\mu_{6}\leqslant 1$.

We see that $\mathrm{LCS}(S,\lambda D)=O$. We may assume that
$$
\bar{L}_{1}\cdot E_{1}=\bar{L}_{2}\cdot E_{2}=\bar{L}_{3}\cdot E_{3}=\bar{L}_{4}\cdot E_{3}=1,
$$
and $\bar{L}_{1}\cdot E_{2}=\bar{L}_{1}\cdot
E_{3}=\bar{L}_{2}\cdot E_{1}=\bar{L}_{2}\cdot
E_{3}=\bar{L}_{3}\cdot E_{1}=\bar{L}_{3}\cdot
E_{2}=\bar{L}_{4}\cdot E_{1}=\bar{L}_{4}\cdot E_{2}=0$.
The~log~pairs
$$
\left(S,\ \frac{1}{2}\Big(2L_{1}+L_{2}\Big)\right)\ \text{and}\ \left(S,\ \frac{1}{2}\Big(L_{2}+L_{3}+L_{3}\Big)\right)%
$$
are log canonical. So, we may assume that either
$L_{2}\not\subseteq\mathrm{Supp}(D)$ or
$L_{1}\not\subseteq\mathrm{Supp}(D)\not\supseteq L_{3}$, which
easily leads to a contradiction (see the proof of
Lemma~\ref{lemma:dP3-A3}).
\end{proof}

\begin{lemma}
\label{lemma:dP3-A4-A2} Suppose that $\Sigma=\{\mathrm{A}_{1},
\mathbb{A}_{2}\}$. Then $\mathrm{lct}(S)=1/2$.
\end{lemma}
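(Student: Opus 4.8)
The plan is to prove the two inequalities $\mathrm{lct}(S)\leqslant 1/2$ and $\mathrm{lct}(S)\geqslant 1/2$ separately, where throughout $O$ denotes the singular point of type $\mathbb{A}_2$ (the worst point) and $P$ the singular point of type $\mathbb{A}_1$. For the upper bound I would exhibit, exactly as in the proof of Lemma~\ref{lemma:dP3-A2}, a hyperplane section of $S$ that splits as three lines $L_1+L_2+L_3\sim -K_S$ concurrent at $O$; then $(S,\frac12(L_1+L_2+L_3))$ is log canonical and not log terminal, so $\mathrm{lct}(S)\leqslant 1/2$. For the lower bound I fix $\lambda<1/2$ and an arbitrary effective $D\equiv -K_S$, and I assume for contradiction that $(S,\lambda D)$ is not log canonical.

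First I would cut down the locus $\mathrm{LCS}(S,\lambda D)$. Arguing as in the proof of Lemma~\ref{lemma:dP3-A1-single}, if an irreducible curve $Z$ occurs in $D$ with coefficient $\geqslant 1/\lambda$, then intersecting with a general conic complementary to a line yields $2>2$, so no curve lies in the locus; since $1/\lambda>2$, no smooth point lies in it either, by Lemma~\ref{lemma:smooth-points-LCS}. Hence $\mathrm{LCS}(S,\lambda D)\subseteq\Sigma=\{O,P\}$, and because $-(K_S+\lambda D)$ is ample, Remark~\ref{remark:connectedness} forces the locus to be a single point, either $\{O\}$ or $\{P\}$.

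The main obstacle is to rule out $\mathrm{LCS}(S,\lambda D)=\{P\}$. The crude blow-up estimate used for the deeper singularities (as in Lemma~\ref{lemma:dP3-A5-A1}) only gives $m>1/(2\lambda)$ for the multiplicity $m$ of $D$ at the $\mathbb{A}_1$ point $P$ (via Remark~\ref{remark:A1}), and since $\lambda<1/2$ this reads merely $m>1$, which does not contradict the intersection bound $m\leqslant 3/2$ coming from $\breve H\cdot\breve D=3-2m\geqslant 0$. So I would instead follow the refined argument of Lemma~\ref{lemma:dP3-A3-A1}: using \cite{BW79} I locate the lines $L_5,L_6$ through $P$ together with an auxiliary line $L_7$ avoiding both $O$ and $P$, write $D=\mu_5 L_5+\mu_6 L_6+\Upsilon$ with $L_5,L_6\not\subseteq\mathrm{Supp}(\Upsilon)$, and bound $\mu_5+\mu_6\leqslant L_7\cdot D=1$. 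Blowing up $P$ and locating the non–log–canonical point $Q$ on the exceptional curve, inversion of adjunction (Remark~\ref{remark:adjunction}) along $\breve L_5$ (or $\breve L_6$) then forces $\mu_5>1$ (resp.\ $\mu_6>1$), contradicting $\mu_5+\mu_6\leqslant 1$. The one point requiring genuine geometric input is the verification that the cubic carrying $\{\mathbb{A}_1,\mathbb{A}_2\}$ really contains such a configuration of lines through $P$; this is where I expect to spend the most effort.

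Finally I would rule out $\mathrm{LCS}(S,\lambda D)=\{O\}$ by reducing to the pure $\mathbb{A}_2$ situation of Lemma~\ref{lemma:dP3-A2}. Resolving $O$ to the two $(-2)$-curves $E_1,E_2$ and writing $\bar D\equiv\alpha^{*}(D)-a_1E_1-a_2E_2$, the inequalities $\bar H\cdot\bar D\geqslant 0$ and $E_i\cdot\bar D\geqslant 0$ give $a_1,a_2\leqslant 2$, and choosing lines through $O$ outside $\mathrm{Supp}(D)$ via Remark~\ref{remark:convexity} sharpens these to $a_1,a_2\leqslant 1$. Locating the non–log–canonical point $Q\in E_1\cup E_2$ and applying Remark~\ref{remark:adjunction} along $E_1$ and $E_2$ then yields the incompatible inequalities $a_1>1$ and $a_2>1$, completing the proof.
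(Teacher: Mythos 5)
Your proposal follows essentially the same route as the paper: the upper bound from a hyperplane section splitting into concurrent lines at the $\mathbb{A}_2$ point, the exclusion of the $\mathbb{A}_1$ point by the refined blow-up-and-adjunction argument of Lemma~\ref{lemma:dP3-A3-A1} (the paper likewise notes the crude Remark~\ref{remark:A1} bound is insufficient here and invokes the line configuration from \cite{BW79}), and the contradiction at the $\mathbb{A}_2$ point by the argument of Lemma~\ref{lemma:dP3-A2} after removing suitable lines from $\mathrm{Supp}(D)$ via Remark~\ref{remark:convexity}. The plan is correct as outlined.
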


\begin{proof}
Let $P$ be a point in $\Sigma$ of type $\mathbb{A}_{1}$. We may
assume that $P\in L_{1}$. Then $r=5$, and~it~easily follows from
\cite{BW79} that the surface $S$ contains lines
$L_{6},L_{7},L_{8},L_{9},L_{10},L_{11}$ such that
$$
P=L_{1}\cap L_{6}\cap L_{7}\cap L_{8},\ L_{9}\cap L_{6}\ne\varnothing,\ L_{9}\cap L_{7}\ne\varnothing, L_{9}\cap L_{6}\ne\varnothing%
$$
and $L_{9}\cap L_{7}\ne\varnothing$, $L_{10}\cap
L_{7}\ne\varnothing$, $L_{10}\cap L_{8}\ne\varnothing$,
$L_{11}\cap L_{6}\ne\varnothing$, $L_{11}\cap
L_{8}\ne\varnothing$. Then
$$
L_{2}\not\ni P\not\in L_{3},\ L_{4}\not\ni P\not\in L_{5},\ L_{6}\not\ni O\not\in L_{7},\ L_{8}\not\ni O\not\in L_{9},\ L_{10}\not\ni O\not\in L_{11},%
$$
which implies that $-K_{S}\sim L_{3}+L_{4}+L_{5}\sim
2L_{1}+L_{2}\sim L_{3}+L_{4}+L_{5}$ and
$$
2L_{1}+L_{2}\sim L_{1}+L_{3}+L_{6}\sim L_{1}+L_{4}+L_{7}\sim L_{1}+L_{5}+L_{8}\sim L_{6}+L_{7}+L_{9}\sim L_{7}+L_{8}+L_{10}\sim L_{6}+L_{8}+L_{11}.%
$$

We see that $\mathrm{lct}(S)\leqslant 1/2$. Therefore, we may
assume that $(S,\lambda D)$ is not log canonical.

Arguing as in the proof of Lemma~\ref{lemma:dP3-A3-A1}, we see
that $\mathrm{LCS}(S,\lambda D)=O$. We may assume that
$$
\bar{L}_{1}\cdot E_{1}=\bar{L}_{2}\cdot E_{1}=\bar{L}_{3}\cdot E_{2}=\bar{L}_{4}\cdot E_{2}=\bar{L}_{5}\cdot E_{2}=1,\ \bar{L}_{1}\cdot E_{2}=\bar{L}_{2}\cdot E_{2}=\bar{L}_{3}\cdot E_{1}=\bar{L}_{4}\cdot E_{1}=\bar{L}_{5}\cdot E_{1}=0.%
$$

It follows from elementary calculations that
$$
\bar{L}_{1}\equiv\alpha^{*}\big(L_{1}\big)-\frac{2}{3}E_{1}-\frac{1}{3}E_{2},\ \bar{L}_{2}\equiv\alpha^{*}\big(L_{2}\big)-\frac{2}{3}E_{1}-\frac{1}{3}E_{2},%
$$
which implies that we may assume that either
$L_{1}\not\subseteq\mathrm{Supp}(D)$ or
$L_{2}\not\subseteq\mathrm{Supp}(D)$. But
$$
\bar{L}_{3}\equiv\alpha^{*}\big(L_{3}\big)-\frac{1}{3}E_{1}-\frac{2}{3}E_{2},\ \bar{L}_{4}\equiv\alpha^{*}\big(L_{4}\big)-\frac{1}{3}E_{1}-\frac{2}{3}E_{2},\ \bar{L}_{5}\equiv\alpha^{*}\big(L_{5}\big)-\frac{1}{3}E_{1}-\frac{2}{3}E_{2},\,%
$$
which easily implies that we may assume that the support of the
divisor $D$ does not contain one of the lines $L_{3}$, $L_{4}$,
$L_{5}$. Arguing as in the proof of Lemma~\ref{lemma:dP3-A2}, we
obtain a contradiction.
\end{proof}

\begin{lemma}
\label{lemma:dP3-A2-A2} Suppose that
$\Sigma=\{\mathbb{A}_{2},\ldots,\mathbb{A}_{2}\}$ and
$|\Sigma|\geqslant 2$. Then $\mathrm{lct}(S)=1/3$.
\end{lemma}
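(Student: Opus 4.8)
The plan is to establish the two inequalities $\mathrm{lct}(S)\leqslant 1/3$ and $\mathrm{lct}(S)\geqslant 1/3$ separately; the lower bound will run parallel to the proof of Lemma~\ref{lemma:dP3-A2}, while the upper bound rests on a new geometric input. For $\mathrm{lct}(S)\leqslant 1/3$ I would first invoke \cite{BW79} to produce a line $L\subset S$ joining two singular points of type $\mathbb{A}_2$. On the minimal resolution its proper transform $\bar{L}$ is a $(-1)$-curve, and pulling $L$ back through the two $\mathbb{A}_2$-chains gives coefficients $\frac{2}{3},\frac{1}{3}$ over each point, so that $L^2=1/3$. Since then $(3L+K_S)\cdot(-K_S)=0$ and $(3L+K_S)^2=0$, the Hodge index theorem yields $3L\equiv -K_S$. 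As $(S,L)$ is log canonical but not log terminal---the reduced curve $L$ enters with coefficient $1$ while the exceptional discrepancy coefficients $\frac{2}{3},\frac{1}{3}$ are smaller than $1$---the effective divisor $D=3L\equiv -K_S$ witnesses $\mathrm{lct}(S)\leqslant 1/3$.

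For the reverse inequality I fix $\lambda<1/3$, assume $(S,\lambda D)$ is not log canonical for some $D\equiv -K_S$, and seek a contradiction. By Lemma~\ref{lemma:smooth-points-1-3} the locus $\mathrm{LCS}(S,\lambda D)$ is contained in the finite set $\Sigma$, and since $-(K_S+\lambda D)\equiv(1-\lambda)(-K_S)$ is ample, Remark~\ref{remark:connectedness} collapses it to a single point $O$, necessarily of type $\mathbb{A}_2$.

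I would then resolve $O$ by $\alpha\colon\bar{S}\to S$ with exceptional $(-2)$-curves $E_1,E_2$ meeting in one point, and write $\bar{D}\equiv\alpha^*(D)-a_1E_1-a_2E_2$. Intersecting $\bar{D}$ with the proper transform $\bar{H}\in|\alpha^*(-K_S)-E_1-E_2|$ of a general hyperplane section through $O$, and with $E_1$ and $E_2$, gives $a_1+a_2\leqslant 3$, $2a_1\geqslant a_2$ and $2a_2\geqslant a_1$, whence $a_1,a_2\leqslant 2$. By Remark~\ref{remark:log-pull-back} there is a point $Q\in E_1\cup E_2$ at which $(\bar{S},\lambda\bar{D}+\lambda a_1E_1+\lambda a_2E_2)$ is not log canonical. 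If $Q\in E_1\setminus E_2$, then (as $\lambda a_1<1$) I may raise the coefficient of $E_1$ to $1$ and apply Remark~\ref{remark:adjunction} to get $2a_1-a_2=E_1\cdot\bar{D}>1/\lambda>3$; together with $a_2\geqslant a_1/2$ this forces $a_1>2$, a contradiction. The case $Q\in E_2\setminus E_1$ is symmetric, and if $Q=E_1\cap E_2$ the two adjunction inequalities give $2a_1>3$ and $2a_2>3$, so that $a_1+a_2>3$, again a contradiction. Hence $\mathrm{lct}(S)\geqslant 1/3$.

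The lower bound is in fact lighter than that of Lemma~\ref{lemma:dP3-A2}: because $1/\lambda>3$ rather than merely $>2$, the crude bounds $a_1,a_2\leqslant 2$ coming straight from the resolution already close every case, so no auxiliary lines or conics and no appeal to Remark~\ref{remark:convexity} are required. The step I expect to need the most care is therefore the geometric input behind the upper bound: that a line through two $\mathbb{A}_2$ points exists and has a $(-1)$-curve as proper transform (so that $L^2=1/3$ and the Hodge index argument applies uniformly to both $\Sigma=\{\mathbb{A}_2,\mathbb{A}_2\}$ and $\Sigma=\{\mathbb{A}_2,\mathbb{A}_2,\mathbb{A}_2\}$). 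Extracting this from the Bruce--Wall classification in \cite{BW79} is the main obstacle.
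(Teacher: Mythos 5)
Your proposal is correct and follows essentially the same route as the paper: the upper bound comes from $3L\equiv -K_{S}$ for a line $L$ through two of the $\mathbb{A}_{2}$ points (the paper simply asserts $-K_{S}\sim 3L_{1}$; your Hodge-index derivation is a valid justification), and the lower bound is the same resolution-of-$O$ argument with the three-case adjunction analysis on $E_{1}\cup E_{2}$, your observation that the crude bounds $a_{1},a_{2}\leqslant 2$ already close every case --- so that the paper's extra step via Remark~\ref{remark:convexity} giving $a_{2}\leqslant 1$ is dispensable --- being accurate. The step you flag as the main obstacle is in fact immediate: the line through any two singular points of a cubic surface automatically lies on the surface (it meets $S$ with multiplicity at least $4$), its proper transform is a $(-1)$-curve by adjunction since $K_{\bar{S}}=\alpha^{*}(K_{S})$, and the intersection pattern with each $\mathbb{A}_{2}$-chain is forced because each chain through which $L$ passes contributes at least $2/3$ to $L^{2}+1$ while the Hodge index theorem gives $L^{2}\leqslant 1/3$.
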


\begin{proof}
Let $P$ be a point in $\Sigma$ such that $P\ne O$. We may assume
that $P\in L_{1}$. Then $-K_{S}\sim 3L_{1}$, which implies that
$\mathrm{lct}(S)\leqslant 1/3$. Thus, we may assume that
$(S,\lambda D)$ is not log canonical.

We may assume that $(S,\lambda D)$ is not log canonical at the
point $O$ by Lemma~\ref{lemma:smooth-points-1-3}. Then
$$
\bar{L}_{1}\equiv\alpha^{*}\big(L_{1}\big)-\frac{1}{3}E_{1}-\frac{2}{3}E_{2},
$$
where we assume that $\bar{L}_{1}\cap E_{2}\ne \varnothing$. Thus,
we may assume that $L_{1}\not\subseteq\mathrm{Supp}(D)$ due to
Remark~\ref{remark:convexity}, which implies that $a_{2}\leqslant
1$, because $\bar{D}\cdot\bar{L}_{1}\geqslant 0$. Arguing as in
the proof of Lemma~\ref{lemma:dP3-A2}, we see~that
$$
3\geqslant a_{1}+a_{2}\leqslant 3,\ 2a_{1}\geqslant a_{2},\ 2a_{2}\geqslant a_{1},\ 1\geqslant a_{2},%
$$
which implies that there is a point $Q\in E_{1}\cup E_{2}$ such
that the log pair
$(\bar{S},\lambda(\bar{D}+a_{1}E_{1}+a_{2}E_{2}))$ is not log
canonical at the point $Q$. Arguing as in the proof of
Lemma~\ref{lemma:dP3-A2}, we see that
$$
\left\{%
\aligned
&Q\in E_{1}\setminus (E_{1}\cap E_{2})\Rightarrow 2a_{1}>a_{2}+3,\\%
&Q\in E_{1}\cap E_{2}\Rightarrow 2a_{1}>3\ \text{and}\ 2a_{2}>3,\\%
&Q\in E_{2}\setminus (E_{2}\cap E_{1})\Rightarrow 2a_{2}>a_{1}+3,\\%
\endaligned\right.%
$$
which easily leads to a contradiction.
\end{proof}

\begin{lemma}
\label{lemma:dP3-A2-A2-A1} Suppose that
$\Sigma=\{\mathbb{A}_{1},\mathbb{A}_{2},\mathbb{A}_{2}\}$. Then
$\mathrm{lct}(S)=1/3$.
\end{lemma}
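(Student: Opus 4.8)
The plan is to combine the argument of Lemma~\ref{lemma:dP3-A2-A2} at the worst singular point with the blow-up trick used for the node in Lemma~\ref{lemma:dP3-A5-A1}. Write $\Sigma=\{P,O,P'\}$, where $P$ is the point of type $\mathbb{A}_1$ and $O,P'$ are the two points of type $\mathbb{A}_2$, with $O$ the worst singular point. Exactly as in Lemma~\ref{lemma:dP3-A2-A2}, there is a line $L_1$ with $P'\in L_1$ and $-K_S\sim 3L_1$; the pair $(S,L_1)$ is log canonical and not log terminal, so $\mathrm{lct}(S)\leqslant 1/3$. It remains to show that $(S,\lambda D)$ is log canonical for every $\lambda<1/3$, and I argue by contradiction, assuming the contrary.

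First I would localize the failure of log canonicity. By Lemma~\ref{lemma:smooth-points-1-3} we have $\mathrm{LCS}(S,\lambda D)\subseteq\Sigma$, and since $-(K_S+\lambda D)\equiv(1-\lambda)(-K_S)$ is ample, Remark~\ref{remark:connectedness} forces $\mathrm{LCS}(S,\lambda D)$ to be a single point. To exclude $P$, I would blow it up as in Lemma~\ref{lemma:dP3-A5-A1}: writing $\breve D\equiv\beta^*(-K_S)-mF$ for the exceptional curve $F$ and intersecting with a general $\breve H\in|-K_{\breve S}-F|$ gives $3-2m=\breve H\cdot\breve D\geqslant 0$, hence $m\leqslant 3/2$, whereas Remark~\ref{remark:A1} yields $m>1/(2\lambda)>3/2$. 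This contradiction shows that the point of $\mathrm{LCS}(S,\lambda D)$ is one of the two $\mathbb{A}_2$ points; as these play symmetric roles, I may assume $\mathrm{LCS}(S,\lambda D)=O$.

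Finally I would reproduce the computation of Lemma~\ref{lemma:dP3-A2}. Over $O$ the partial resolution $\alpha$ contracts two $(-2)$-curves $E_1,E_2$, and writing $\bar D\equiv\alpha^*(D)-a_1E_1-a_2E_2$ the inequalities $\bar H\cdot\bar D\geqslant 0$ and $E_i\cdot\bar D\geqslant 0$ give $a_1+a_2\leqslant 3$, $2a_1\geqslant a_2$ and $2a_2\geqslant a_1$. By Remark~\ref{remark:convexity} I may assume $L_1\not\subseteq\mathrm{Supp}(D)$; since $\bar L_1\equiv\alpha^*(L_1)-\frac{1}{3}E_1-\frac{2}{3}E_2$, the inequality $\bar L_1\cdot\bar D\geqslant 0$ forces $a_2\leqslant 1$. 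For the point $Q\in E_1\cup E_2$ at which the pair is not log canonical, the three cases $Q\in E_1\setminus(E_1\cap E_2)$, $Q\in E_1\cap E_2$ and $Q\in E_2\setminus(E_1\cap E_2)$ yield, by Remark~\ref{remark:adjunction}, the implications $2a_1>a_2+3$, then $2a_1>3$ together with $2a_2>3$, and finally $2a_2>a_1+3$, each of which contradicts $a_2\leqslant 1$ in combination with the relations above.

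The main obstacle is the localization step rather than the terminal computation: one must verify that the non-log-canonical locus cannot sit at the $\mathbb{A}_1$ point, for which the exact comparison between the intersection bound $m\leqslant 3/2$ and the discrepancy bound $m>3/2$ coming from Remark~\ref{remark:A1} is essential, and one must invoke the connectedness of $\mathrm{LCS}(S,\lambda D)$ to reduce to a single point so that the symmetry of the two $\mathbb{A}_2$ points places it at $O$. Once $\mathrm{LCS}(S,\lambda D)=O$ is secured, the contradiction is a verbatim repetition of Lemma~\ref{lemma:dP3-A2}, the only extra input being the bound $a_2\leqslant 1$ supplied by the line $L_1$ through $P'$.
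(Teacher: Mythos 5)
Your argument is correct and follows the paper's proof essentially verbatim: the upper bound from $-K_{S}\sim 3L_{1}$ for the line joining the two $\mathbb{A}_{2}$ points, the localization of $\mathrm{LCS}(S,\lambda D)$ to $\Sigma$ and the exclusion of the $\mathbb{A}_{1}$ point by the blow-up comparison $m\leqslant 3/2$ versus $m>1/(2\lambda)>3/2$ (exactly the step the paper borrows from Lemma~\ref{lemma:dP3-A5-A1}), and the final case analysis on $Q\in E_{1}\cup E_{2}$ from Lemma~\ref{lemma:dP3-A2-A2}. You have merely written out the details that the paper leaves as cross-references, and all of them check out.
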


\begin{proof}
Let $P\ne O$ be a point in $\Sigma$ of type $\mathbb{A}_{2}$. We
may assume that $P\in L_{1}$. Then $-K_{S}\sim 3L_{1}$, which
implies that $\mathrm{lct}(S)\leqslant 1/3$. Thus, we may assume
that $(S,\lambda D)$ is not log canonical.

We may assume that $L_{1}\not\subseteq\mathrm{Supp}(D)$ due to
Remark~\ref{remark:convexity}. But $\mathrm{LCS}(S,\lambda
D)\subseteq\Sigma$ by Lemma~\ref{lemma:smooth-points-1-3}.

Arguing as in the proof of Lemma~\ref{lemma:dP3-A5-A1}, we see
that $\mathrm{LCS}(S,\lambda D)\subseteq O\cup P$, which easily
leads to a contradiction (see the proof of
Lemma~\ref{lemma:dP3-A2-A2}).
\end{proof}

\begin{lemma}
\label{lemma:dP3-A3-A1-A1} Suppose that
$\Sigma=\{\mathbb{A}_{1},\mathbb{A}_{1},\mathbb{A}_{3}\}$. Then
$\mathrm{lct}(S)=1/2$.
\end{lemma}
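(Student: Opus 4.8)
The plan is to follow the two-step template of Lemmas \ref{lemma:dP3-A3} and \ref{lemma:dP3-A3-A1}. Let $O$ be the point of type $\mathbb{A}_3$ and let $P_1,P_2$ be the two points of type $\mathbb{A}_1$, so that $\Sigma=\{O,P_1,P_2\}$ with $O$ the worst singularity. First I would record, using the classification of lines on singular cubic surfaces in \cite{BW79}, the lines of $S$ together with their incidences with $O$, $P_1$ and $P_2$. The upper bound $\mathrm{lct}(S)\leqslant 1/2$ I would obtain exactly as in Lemma \ref{lemma:dP3-A3}: the three lines of a tritangent plane through $O$ pull back to the minimal resolution with coefficient $1$ on one of the $(-2)$-curves over $O$, so the pair $(S,\frac{1}{2}(L_1+L_2+L_3))$ is log canonical but not log terminal. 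It then remains to show that $(S,\lambda D)$ is log canonical for every $\lambda<1/2$ and every effective $\mathbb{Q}$-divisor $D\equiv -K_S$, which I would prove by contradiction, assuming $(S,\lambda D)$ is not log canonical.

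The first reduction is to concentrate the non-log-canonical locus at $\Sigma$. Arguing as in the proofs of Lemmas \ref{lemma:dP3-A1-single} and \ref{lemma:dP3-A3}, any irreducible curve $Z$ lying in $\mathrm{LCS}(S,\lambda D)$ must enter $D$ with coefficient $\mu\geqslant 1/\lambda>2$; then $3=-K_S\cdot D\geqslant\mu\deg(Z)$ forces $Z$ to be a line, and a general conic $C$ with $C+Z\sim -K_S$ gives $2=C\cdot D\geqslant\mu\,(C\cdot Z)>2$, a contradiction. Hence $\mathrm{LCS}(S,\lambda D)$ contains no curve, so $\mathrm{LCS}(S,\lambda D)\subseteq\Sigma$ by Lemma \ref{lemma:smooth-points-1-3}; since $-(K_S+\lambda D)\equiv(1-\lambda)(-K_S)$ is ample, Remark \ref{remark:connectedness} forces $\mathrm{LCS}(S,\lambda D)$ to be a single point of $\Sigma$.

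Next I would exclude the two $\mathbb{A}_1$ points. Suppose $\mathrm{LCS}(S,\lambda D)=P_1$. Writing $D=\mu L_a+\mu'L_b+\Upsilon$ in terms of the two lines through $P_1$ that avoid $O$, and intersecting $D$ with a line meeting both $L_a$ and $L_b$ but not passing through $P_1$ (whose existence is read off from \cite{BW79}), I would bound $\mu+\mu'\leqslant 1$. Blowing up $P_1$ to resolve the $\mathbb{A}_1$ point, locating a point $Q$ on the exceptional $(-2)$-curve at which log canonicity fails, and applying Remark \ref{remark:adjunction}, I would then obtain $\mu>1$, contradicting $\mu\leqslant\mu+\mu'\leqslant 1$; this is verbatim the argument ruling out the $\mathbb{A}_1$ point in Lemma \ref{lemma:dP3-A3-A1}. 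The identical computation disposes of $P_2$, so $\mathrm{LCS}(S,\lambda D)=O$.

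Finally, with the non-log-canonical locus concentrated at the $\mathbb{A}_3$ point $O$, I would run the endgame of Lemma \ref{lemma:dP3-A3} without change. Resolving $O$ by the chain $E_1,E_2,E_3$ of $(-2)$-curves, writing $\bar D\equiv\alpha^*(D)-\sum_{i=1}^{3}a_iE_i$, and using $\bar H\cdot\bar D\geqslant 0$ together with $E_i\cdot\bar D\geqslant 0$ yields $a_1+a_3\leqslant 3$, $2a_1\geqslant a_2$, $2a_2\geqslant a_1+a_3$ and $2a_3\geqslant a_2$; Remark \ref{remark:convexity} then lets me assume that two suitable lines through $O$ are absent from $\mathrm{Supp}(D)$, which bounds the $a_i$ further. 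Locating the point $Q\in E_1\cup E_2\cup E_3$ at which the pair fails to be log canonical and applying Remark \ref{remark:adjunction} in each possible position of $Q$ reproduces the incompatible system of inequalities obtained in Lemma \ref{lemma:dP3-A3}, the desired contradiction. The step I expect to be the main obstacle is the bookkeeping of the line configuration from \cite{BW79}: one must check that through each of $P_1$ and $P_2$ there are two lines avoiding $O$ together with a transversal line avoiding that point, so that the exclusion argument of the third paragraph applies, and that the lines needed for the convexity reductions at $O$ remain available simultaneously. Once the configuration is in hand, the $\mathbb{A}_3$ endgame is routine.
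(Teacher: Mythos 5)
Your proposal follows the paper's proof step for step: upper bound from lines through the $\mathbb{A}_{3}$ point, exclusion of curves and smooth points from $\mathrm{LCS}(S,\lambda D)$, exclusion of the two $\mathbb{A}_{1}$ points by blowing up and applying adjunction on the exceptional curve, and then the $\mathbb{A}_{3}$ endgame of Lemma~\ref{lemma:dP3-A3}. The only corrections needed are in the line configuration, which you rightly flagged as the delicate point. Here $r=3$ and the surface has exactly five lines: $L_{1},L_{2},L_{3}$ through $O$ (with $P_{1}\in L_{1}$, $P_{2}\in L_{2}$), the line $L_{4}$ joining $P_{1}$ and $P_{2}$, and a line $L_{5}$ disjoint from $\Sigma$. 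The three lines through $O$ are \emph{not} coplanar, so there is no hyperplane section $L_{1}+L_{2}+L_{3}$; the bound $\mathrm{lct}(S)\leqslant 1/2$ comes instead from $-K_{S}\sim 2L_{1}+L_{3}$ (one checks $\bar{L}_{1}\equiv\alpha^{*}(L_{1})-\tfrac{3}{4}E_{1}-\tfrac{1}{2}E_{2}-\tfrac{1}{4}E_{3}$ and $\bar{L}_{3}\equiv\alpha^{*}(L_{3})-\tfrac{1}{2}E_{1}-E_{2}-\tfrac{1}{2}E_{3}$, so $\tfrac{1}{2}(2L_{1}+L_{3})$ has coefficient $1$ along $E_{1}$ and $E_{2}$). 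Likewise, through $P_{1}$ there is only \emph{one} line avoiding $O$, namely $L_{4}$, not two; the exclusion of $P_{1}$ therefore splits off the single line $L_{4}$, bounds $\mu_{4}$ via $-K_{S}\sim 2L_{4}+L_{5}$ and $L_{5}\cdot D=1$, and runs your blow-up/adjunction argument to get $\mu_{4}>2$, a contradiction --- this is in fact simpler than the two-line version of Lemma~\ref{lemma:dP3-A3-A1} you were transplanting. With these two configuration facts corrected, your argument is the paper's.
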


\begin{proof}
Let $P_{1}$ and $P_{2}$ be points in $\Sigma$ of type
$\mathrm{A}_{1}$. Then we may assume that $P_{1}\in L_{1}$ and
$P_{2}\in L_{2}$, while we have $r=3$. The surface $S$ contains
lines $L_{4}$ and $L_{5}$ such that
$$
P_{1}\in L_{4}\ni P_{2},\ O\not\in L_{4},\ P_{1}\not\in L_{3}\not\ni P_{2},\ L_{5}\cap\Sigma=\varnothing,%
$$
which implies that $L_{5}\cap L_{3}\ne\varnothing$, $L_{5}\cap
L_{4}\ne\varnothing$, $L_{5}\cap L_{1}=\varnothing$, $L_{5}\cap
L_{2}=\varnothing$. Then
\begin{equation}
\label{equation:A3-A1-A1-equivalences}
-K_{S}\sim L_{1}+L_{2}+L_{4}\sim L_{3}+2L_{1}\sim L_{3}+2L_{2}\sim 2L_{3}+L_{5}\sim 2L_{4}+L_{5},%
\end{equation}
which implies that $\mathrm{lct}(S)\leqslant 1/2$. We may assume
that $(S,\lambda D)$ is not log canonical.

Put $D=\mu_{i} L_{i}+\Omega_{i}$, where $\mu_{i}$ is a
non-negative number, and $\Omega_{i}$ is an~effective
$\mathbb{Q}$-divisor, whose support does not contain the line
$L_{i}$. Let us show that $\mu_{i}<1/\lambda$ for every
$i=1,\ldots,5$.

Suppose that $\mu_{1}\geqslant 1/\lambda>2$. It follows from
equivalences~\ref{equation:A3-A1-A1-equivalences} and
Remark~\ref{remark:convexity} that we may assume that
$L_{3}\not\subseteq\mathrm{Supp}(D)$. Therefore, we have
$$
1=L_{3}\cdot D=L_{3}\cdot\big(\mu_{1}L_{1}+\Omega_{1}\big)\geqslant \mu_{1} L_{3}\cdot L_{1}=\mu_{1}/2>1,%
$$
which is a contradiction.  Similarly, we see that
$\mu_{2}<1/\lambda$, $\mu_{3}<1/\lambda$, $\mu_{4}<1/\lambda$,
$\mu_{5}<1/\lambda$.

Arguing as in the proof of Lemma~\ref{lemma:dP3-A1-single}, we see
that $\mathrm{LCS}(S,\lambda D)$ does not contain curves and
smooth points of $S$. It follows from
Remark~\ref{remark:connectedness} that $\mathrm{LCS}(S,\lambda D)$
consist of one point in $\Sigma$.

Suppose that $\mathrm{LCS}(S,\lambda D)=P_{1}$. Let
$\beta\colon\breve{S}\to S$ be a blow up of the point $P_{1}$.
Then
$$
\mu_{4}\breve{L}_{4}+\breve{\Omega}\equiv\beta^{*}\big(\mu_{4}L_{4}+\Omega\big)-\big(\mu_{4}/2+\epsilon\big)G,
$$
where $G$ is the exceptional curve of the birational morphism
$\beta$, $\breve{L}_{4}$ and $\breve{\Omega}$ are proper
transforms of the divisors $L_{4}$ and $\Omega$ on the surface
$\breve{S}$, respectively, and $\epsilon$ is a positive rational
number. Then
$$
0\leqslant\Big(\mu_{4}\breve{L}_{4}+\breve{\Omega}\Big)\breve{H}=\Big(\beta^{*}\big(\mu_{4}L_{4}+\Omega\big)-\big(\mu_{4}/2+\epsilon\big)G\Big)\cdot\Big(\beta^{*}\big(-K_{S}\big)-G\Big)=3-\mu_{4}-2\epsilon,
$$
where $\breve{H}$ is a general curve in $|-K_{\breve{S}}-G|$.
Thus, there is a point $P\in G$ such that the log pair
$$
\Big(\breve{S},\ \mu_{4}\breve{L}_{4}+\breve{\Omega}+\big(\mu_{4}/2+\epsilon\big)G\Big)%
$$
is not log canonical at $P$. We have
$1-\epsilon=\breve{\Omega}\cdot\breve{L}_{4}\geqslant 0$, which
implies that $\epsilon\leqslant 1$. Then
$$
2\epsilon=\breve{\Omega}\cdot G>2
$$
in the case when $P\not\in\breve{L}_{4}$ (see
Remark~\ref{remark:adjunction}). Thus, we see that
$P\in\breve{L}_{4}$. Then
$$
1-\epsilon=\breve{\Omega}\cdot\breve{L}_{4}>2-\mu_{4}/2-\epsilon,%
$$
due to Remark~\ref{remark:adjunction}. Thus, we see that
$\mu_{4}>2$, which is a contradiction.

Similarly, we see that $P_{2}\not\in\mathrm{LCS}(S,\lambda D)$.
Then $\mathrm{LCS}(S,\lambda D)=O$. We may assume that
$$
\bar{L}_{1}\cdot E_{1}=\bar{L}_{2}\cdot E_{3}=\bar{L}_{3}\cdot E_{2}=1,\ \bar{L}_{1}\cdot E_{2}=\bar{L}_{1}\cdot E_{3}=\bar{L}_{2}\cdot E_{1}=\bar{L}_{2}\cdot E_{2}=\bar{L}_{3}\cdot E_{1}=\bar{L}_{3}\cdot E_{3}=0.%
$$

It follows from elementary calculations that
$$
\bar{L}_{1}\equiv\alpha^{*}\big(L_{1}\big)-\frac{3}{4}E_{1}-\frac{1}{2}E_{2}-\frac{1}{4}E_{3},\ \bar{L}_{2}\equiv\alpha^{*}\big(L_{2}\big)-\frac{1}{4}E_{1}-\frac{1}{2}E_{2}-\frac{3}{4}E_{3},\ \bar{L}_{3}\equiv\alpha^{*}\big(L_{3}\big)-\frac{1}{2}E_{1}-E_{2}-\frac{1}{2}E_{3},%
$$
which implies that the singularities of the log pairs
$$
\left(S,\ \frac{1}{2}\Big(2L_{1}+L_{3}\Big)\right)\ \text{and}\ \left(S,\ \frac{1}{2}\Big(2L_{2}+L_{3}\Big)\right)%
$$
are log canonical. But we may assume that either
$L_{1}\not\subseteq\mathrm{Supp}(D)\not\supseteq L_{2}$ or
$L_{3}\not\subseteq\mathrm{Supp}(D)$,~because the
equivalences~\ref{equation:A3-A1-A1-equivalences} hold. Now the
proof of Lemma~\ref{lemma:dP3-A3} leads to a contradiction.
\end{proof}

\begin{lemma}
\label{lemma:dP3-A2-A1-A1} Suppose that
$\Sigma=\{\mathbb{A}_{1},\mathbb{A}_{1},\mathbb{A}_{2}\}$. Then
$\mathrm{lct}(S)=1/2$.
\end{lemma}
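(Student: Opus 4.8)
The plan is to make the $\mathbb{A}_2$ point $O$ the worst singularity and to reduce, via the connectedness and adjunction tools of Section~\ref{section:tools}, to the single-$\mathbb{A}_2$ computation of Lemma~\ref{lemma:dP3-A2}. Write $\Sigma=\{P_1,P_2,O\}$ with $P_1,P_2$ of type $\mathbb{A}_1$, and let $\alpha\colon\bar S\to S$ contract the two $(-2)$-curves $E_1,E_2$ lying over $O$. Since the line joining the two nodes $P_1$ and $P_2$ meets the cubic with multiplicity at least four, it lies on $S$; using \cite{BW79} to list the remaining lines, I would record three coplanar lines $L_1,L_2,L_3$ through $O$ with $L_1+L_2+L_3\sim -K_S$, together with the equivalences (of the shapes $2L_i+L_j\sim -K_S$ and $L_i+L_j+L_k\sim -K_S$) needed later. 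As in Lemma~\ref{lemma:dP3-A2}, the pair $(S,\frac12(L_1+L_2+L_3))$ is log canonical but not log terminal, which gives $\mathrm{lct}(S)\leqslant 1/2$.

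For the reverse inequality fix $\lambda<1/2$ and an effective $D\equiv -K_S$ with $(S,\lambda D)$ not log canonical. Writing $D=\mu_iL_i+\Omega_i$ with $L_i\not\subseteq\mathrm{Supp}(\Omega_i)$, I would first show $\mu_i<1/\lambda$, and in fact $\mu_i\leqslant 1$ along the lines joining singular points, by intersecting $D$ with a general conic or with an auxiliary line meeting $L_i$ once away from $\Sigma$, exactly as in Lemmas~\ref{lemma:dP3-A1-single} and~\ref{lemma:dP3-A3-A1-A1}. Then $\mathrm{LCS}(S,\lambda D)$ contains no curve (Remark~\ref{remark:curves}) and no smooth point (Remark~\ref{remark:smooth-points}, as in Lemma~\ref{lemma:dP3-A1-single}), so Remark~\ref{remark:connectedness} forces it to be a single point of $\Sigma$.

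Next I would eliminate the two $\mathbb{A}_1$ points. If $\mathrm{LCS}(S,\lambda D)$ were $P_1$, then blowing up $P_1$ and using the line through $P_1$ and $P_2$ together with Remarks~\ref{remark:A1} and~\ref{remark:adjunction}, the computation of Lemma~\ref{lemma:dP3-A3-A1-A1} applies and forces the multiplicity of $D$ along that line to exceed $2$, contradicting the bound $\leqslant 1$ just obtained; the same argument applies symmetrically to $P_2$. Hence $\mathrm{LCS}(S,\lambda D)=O$.

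It remains to contradict non-log-canonicity at the $\mathbb{A}_2$ point, which I expect to be the crux. On $\bar S$ write $\bar D\equiv\alpha^*(D)-a_1E_1-a_2E_2$; intersecting with a general $\bar H\in|-K_{\bar S}-E_1-E_2|$ and with $E_1,E_2$ gives $a_1+a_2\leqslant 3$, $2a_1\geqslant a_2$ and $2a_2\geqslant a_1$. Invoking Remark~\ref{remark:convexity} to assume that one line through $E_1$ and one line through $E_2$, each avoiding $\mathrm{Supp}(D)$, are not components of $D$, I would improve these to $a_1\leqslant 1$ and $a_2\leqslant 1$. Localizing the non-log-canonical point to some $Q\in E_1\cup E_2$ and applying Remark~\ref{remark:adjunction} on $E_1$ and on $E_2$ precisely as in Lemma~\ref{lemma:dP3-A2} then yields $a_1>1$ and $a_2>1$, the required contradiction. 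The main obstacle is exactly the sharpening to $a_1\leqslant 1$ and $a_2\leqslant 1$: one must verify that the line list of \cite{BW79} really supplies, through each of $E_1$ and $E_2$, a line that can be assumed absent from $\mathrm{Supp}(D)$. Once this is secured the numerics coincide with the single-$\mathbb{A}_2$ case and the proof closes.
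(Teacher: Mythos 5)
Your overall route is the paper's: get the upper bound from a line configuration through the $\mathbb{A}_2$ point $O$, rule out curves and smooth points in $\mathrm{LCS}(S,\lambda D)$, eliminate the two nodes by the blow-up argument of Lemma~\ref{lemma:dP3-A3-A1-A1}, and then run the $E_1,E_2$ adjunction computation of Lemma~\ref{lemma:dP3-A2}. But you have left unproved the step you yourself call the crux, and it is exactly the step that carries the content of the lemma. To get $a_1\leqslant 1$ and $a_2\leqslant 1$ you need, for each of $E_1$ and $E_2$, a line through $O$ meeting that exceptional curve which you may assume is not in $\mathrm{Supp}(D)$; writing that ``one must verify that the line list of \cite{BW79} really supplies'' such lines is not a proof. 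Here $r=4$: of the four lines $L_1,\dots,L_4$ through $O$, two (say $L_1,L_2$) meet $E_1$ and two ($L_3,L_4$) meet $E_2$, with $\bar L_1\equiv\alpha^*(L_1)-\tfrac{2}{3}E_1-\tfrac{1}{3}E_2$ and likewise for $L_2$. Since $-K_S\sim 2L_1+L_2$ and these discrepancies show that $(S,L_1+\tfrac{1}{2}L_2)$ is log canonical, Remark~\ref{remark:convexity} lets you assume $D$ omits one of $L_1,L_2$, and either choice gives $a_1\leqslant 1$ because $\bar L_i\cdot E_1=1$ and $\bar L_i\cdot E_2=0$ for $i=1,2$; the symmetric argument with $-K_S\sim 2L_4+L_3$ gives $a_2\leqslant 1$. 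Without this the final contradiction ($a_1>1$ or $a_2>1$ at the non-log-canonical point $Q\in E_1\cup E_2$) does not close.

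A secondary error: you assert the existence of three coplanar lines $L_1,L_2,L_3$ through $O$ with $L_1+L_2+L_3\sim -K_S$. No such configuration exists on this surface: the anticanonical plane sections through $O$ that split into lines are of the form $2L_i+L_j$ with $L_i,L_j\ni O$, or $L_i+L_j+L_k$ with $L_k\not\ni O$ (e.g.\ $L_1+L_4+L_7$, where $L_7$ is the line joining the two nodes). This does not hurt the upper bound, which already follows from $\tfrac{1}{2}\bigl(2L_1+L_2\bigr)$, but it signals that the line geometry --- the very input your sketch defers --- was not actually worked out.
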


\begin{proof}
Let $P_{1}\ne P_{2}$ be points in $\Sigma$ of type
$\mathrm{A}_{1}$. Then we may assume that $P_{1}\in L_{1}$ and
$P_{2}\in L_{4}$, while we have $r=4$. The surface $S$ contains
lines $L_{5},L_{6},L_{7},L_{8}$ such that
$$
P_{1}\in L_{5},\ P_{2}\in L_{6},\ P_{1}\in L_{7}\ni P_{2},\ O\not\in L_{8},\ P_{1}\not\in L_{8}\not\ni P_{2},%
$$
which implies that $L_{8}\cap L_{7}\ne\varnothing$, $L_{8}\cap
L_{2}\ne\varnothing$, $L_{8}\cap L_{3}\ne\varnothing$, $L_{2}\cap
L_{7}=\varnothing$, $L_{3}\cap L_{7}=\varnothing$. Then
$$
L_{1}+L_{4}+L_{7}\sim L_{2}+2L_{1}\sim L_{3}+2L_{4}\sim 2L_{7}+L_{8}\sim L_{2}+L_{3}+L_{8}\sim L_{1}+L_{3}+L_{5}\sim L_{4}+L_{2}+L_{6},%
$$
and $-K_{S}\sim L_{1}+L_{4}+L_{7}$. Then $\mathrm{lct}(S)\leqslant
1/2$. We may assume that $(S,\lambda D)$ is not log~canonical.

Arguing as in the proof of Lemma~\ref{lemma:dP3-A3-A1-A1}, we see
that $\mathrm{LCS}(S,\lambda D)=O$. We may assume that
$$
\bar{L}_{1}\cdot E_{1}=\bar{L}_{2}\cdot E_{1}=\bar{L}_{3}\cdot E_{2}=\bar{L}_{4}\cdot E_{2}=1,\ \bar{L}_{1}\cdot E_{2}=\bar{L}_{2}\cdot E_{2}=\bar{L}_{3}\cdot E_{1}=\bar{L}_{4}\cdot E_{1}=0.%
$$

The log pair $(S,L_{1}+\frac{1}{2}L_{2}))$ is log canonical,
because  the equivalences
$$
\bar{L}_{1}\equiv\alpha^{*}\big(L_{1}\big)-\frac{2}{3}E_{1}-\frac{1}{3}E_{2},\ \bar{L}_{2}\equiv\alpha^{*}\big(L_{2}\big)-\frac{2}{3}E_{1}-\frac{1}{3}E_{2}%
$$
hold. So, we may assume that either
$L_{1}\not\subseteq\mathrm{Supp}(D)$ or
$L_{2}\not\subseteq\mathrm{Supp}(D)$, because $-K_{S}\sim
2L_{1}+L_{2}$.

Similarly, we may assume that either
$L_{3}\not\subseteq\mathrm{Supp}(D)$ or
$L_{4}\not\subseteq\mathrm{Supp}(D)$, which very easily leads to
a~contradiction (see the proof of Lemma~\ref{lemma:dP3-A2}).
\end{proof}

Therefore, it follows from \cite{BW79} that the assertion of
Theorem~\ref{theorem:main} is proved.

\section{Invariant thresholds.}
\label{section:invariants}

In this section we prove the following two lemmas.

\begin{lemma}
\label{lemma:Cayley-cubic} Let $S$ be a cubic surface in
$\mathbb{P}^{3}$ given by the
equation~\ref{equation:Cayley-cubic}. Then
$\mathrm{lct}(S,\mathrm{S}_{4})=1$.
\end{lemma}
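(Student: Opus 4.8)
The plan is to prove the two inequalities $\mathrm{lct}(S,\mathrm{S}_{4})\le 1$ and $\mathrm{lct}(S,\mathrm{S}_{4})\ge 1$ separately, exploiting the geometry of the Cayley cubic. Recall that $S$ has exactly four nodes, located at the coordinate points $[1:0:0:0],\dots,[0:0:0:1]$, which form a single $\mathrm{S}_{4}$-orbit, and that $S$ contains nine lines: the six edges $\{x_{i}=x_{j}=0\}$ of the coordinate tetrahedron (one orbit) and the three residual lines $M_{1}=\{x+y=0,\ z+t=0\}$, $M_{2}=\{x+z=0,\ y+t=0\}$, $M_{3}=\{x+t=0,\ y+z=0\}$ (a second orbit, on which $\mathrm{S}_{4}$ acts through $\mathrm{S}_{4}\twoheadrightarrow\mathrm{S}_{3}$). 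First I would record two facts used throughout: the group $\mathrm{S}_{4}$ has no fixed point on $S$, since the only point of $\mathbb{P}^{3}$ fixed by all coordinate permutations is $[1:1:1:1]\notin S$; and $S$ carries no $\mathrm{S}_{4}$-invariant line and no $\mathrm{S}_{4}$-invariant conic, the latter because residuation $C\mapsto -K_{S}-C$ gives an $\mathrm{Aut}(S)$-equivariant bijection between conics and lines (by \cite{BW79}), while no line is invariant.

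For the upper bound I would exhibit a single invariant anticanonical divisor whose threshold equals $1$. The plane $\Pi=\{x+y+z+t=0\}$ is $\mathrm{S}_{4}$-invariant and avoids all four nodes (each node has coordinate sum $1$). Substituting $t=-(x+y+z)$ into the equation of $S$ factors the section as $\Pi\cap S=(x+y)(y+z)(z+x)=0$, so $T:=\Pi\cap S=M_{1}+M_{2}+M_{3}$ is a reduced triangle formed by the three residual lines, meeting pairwise transversally at three smooth points of $S$. Since $T\equiv -K_{S}$ is a reduced nodal anticanonical curve contained in the smooth locus, $(S,T)$ is log canonical while $(S,(1+\varepsilon)T)$ is not, so $\mathrm{lct}(S,T)=1$ and hence $\mathrm{lct}(S,\mathrm{S}_{4})\le 1$.

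For the lower bound I would take an arbitrary $\mathrm{S}_{4}$-invariant effective $\mathbb{Q}$-divisor $D\equiv -K_{S}$ and a rational $\lambda<1$, and show $(S,\lambda D)$ is log canonical. The locus $\mathrm{LCS}(S,\lambda D)$ is $\mathrm{S}_{4}$-invariant, and it is connected by Remark~\ref{remark:connectedness} because $-(K_{S}+\lambda D)\equiv(1-\lambda)(-K_{S})$ is ample. If this locus were zero-dimensional it would be a single $\mathrm{S}_{4}$-fixed point, which is impossible; so it suffices to rule out curves. If an irreducible curve $C$ lay in $\mathrm{LCS}(S,\lambda D)$, then its coefficient in $\lambda D$ would be at least $1$ by Remark~\ref{remark:curves}, hence its coefficient in $D$ would be at least $1/\lambda$; by invariance every curve in the orbit $\mathrm{S}_{4}\cdot C$ would occur in $D$ with that same coefficient, giving $3=\deg D\ge \tfrac{1}{\lambda}\,|\mathrm{S}_{4}\cdot C|\deg C$, whence $|\mathrm{S}_{4}\cdot C|\deg C\le 3\lambda<3$.

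I would then close the argument by noting that every orbit of irreducible curves on $S$ satisfies $|\mathrm{S}_{4}\cdot C|\deg C\ge 3$: a line lies in an orbit of size $6$ or $3$, a conic likewise (by the residuation bijection), no invariant line or conic exists, and any curve of degree at least $3$ already forces the product to be at least $3$. This contradicts $|\mathrm{S}_{4}\cdot C|\deg C<3$, so $\mathrm{LCS}(S,\lambda D)$ is empty and $(S,\lambda D)$ is log canonical for every rational $\lambda<1$; therefore $\mathrm{lct}(S,\mathrm{S}_{4})\ge 1$, and with the upper bound this gives $\mathrm{lct}(S,\mathrm{S}_{4})=1$. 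I expect the main obstacle to be the orbit-and-degree bookkeeping in this final step, namely using \cite{BW79} to confirm that $\mathrm{S}_{4}$ fixes no line or conic and that the minimal value of $|\mathrm{S}_{4}\cdot C|\deg C$ over orbits of irreducible curves is exactly $3$; the absence of fixed points then disposes of the zero-dimensional case at once.
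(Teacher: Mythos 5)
Your proposal is correct and follows the same overall skeleton as the paper's proof: the upper bound comes from the same invariant hyperplane section $T=S\cap\{x+y+z+t=0\}$, and the lower bound from the absence of $\mathrm{S}_{4}$-fixed points on $S$, connectedness of the locus of log canonical singularities, and the degree estimate $3=-K_{S}\cdot D\geqslant m\,\mathrm{deg}(C)$ forcing any invariant curve in that locus to have total degree at most $2$. Where you genuinely diverge is in the final exclusion step. The paper argues by intersection theory: it pairs a putative line component $L$ against a general residual conic $\bar{L}$ with $L+\bar{L}\sim -K_{S}$ to get $\bar{L}\cdot L<2$, hence $L$ passes through a node and lies in the $6$-element orbit $\{L_{12},\ldots,L_{34}\}$, which cannot fit inside an invariant curve of degree at most $2$; an invariant irreducible conic is ruled out the same way via its invariant residual line. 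You instead use the explicit enumeration of the nine lines of the Cayley cubic and their orbit structure (sizes $6$ and $3$), together with equivariance of residuation, to conclude that $|\mathrm{S}_{4}\cdot C|\cdot\mathrm{deg}(C)\geqslant 3$ for every irreducible curve, contradicting your bound $|\mathrm{S}_{4}\cdot C|\cdot\mathrm{deg}(C)<3$. Both routes work; yours is more explicit and self-contained once the line configuration is taken from \cite{BW79}, while the paper's avoids enumerating the lines at the cost of the fractional intersection numbers at the nodes. One small imprecision to fix: residuation $C\mapsto L$ with $C+L\sim -K_{S}$ is an equivariant map from conics to lines, not a bijection (each line carries a pencil of residual conics); but equivariance alone already shows that there is no invariant conic and that every conic orbit has size at least $2$, which is all your inequality requires.
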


\begin{proof}
Let $O_{1},\ldots,O_{4}$ be singular points of the surface
$S_{1}$, and let $L_{ij}$ be a line in $S$ that contains the
points $O_{i}$ and $O_{j}$, where $i\ne j$. Then $\mathrm{S}_{4}$
acts transitively on $\{O_{1},\ldots,O_{4}\}$ and
$\{L_{12},\ldots,L_{34}\}$.

Let $T$ be a curve that is cut out on $S$ by the equation
$x+y+z+t=0$. Then $T$ is $\mathrm{S}_{4}$-in\-va\-ri\-ant, which
implies that $\mathrm{lct}(S,\mathrm{S}_{4})\leqslant 1$. Suppose
that $\mathrm{lct}(S,\mathrm{S}_{4})<1$. Then there
is~an~effective~$\mathrm{S}_{4}$-in\-va\-ri\-ant~$\mathbb{Q}$-divisor
$D$ such that $D\equiv -K_{S}$, and $(S,\lambda D)$ is not log
canonical, where $\lambda\in\mathbb{Q}$~and~$\lambda<1$.

The surface $S$ does not contain $\mathrm{S}_{4}$-invariant
points, because the group $\mathrm{S}_{4}$ does no have faithful
two-dimensional linear representations. Then
$\mathrm{LCS}(S,\lambda D)$ contains a curve by
Remark~\ref{remark:connectedness}.

There are a reduced $\mathrm{S}_{4}$-inva\-ri\-ant curve $C\subset
S$ and a rational number $m\geqslant 1/\lambda$ such that
$$
D=mC+\Omega,
$$
where $\Omega$ is an effective di\-vi\-sor, whose
support does not contain components of $C$. Then
$$
3=-K_{S}\cdot D=m\mathrm{deg}\big(C\big)-K_{S}\cdot\Omega\geqslant m \mathrm{deg}\big(C\big)>\mathrm{deg}\big(C\big),%
$$
which implies that either $C$ is a line, or $C$ is a conic.

Suppose that the curve $C$ is not an irreducible conic. Let $L$ be
any irreducible component the~curve $C$. Then $L$ is a line. Let
$M$ be a general hyperplane section of $S$ that contains $L$.~Then
$$
M=L+\bar{L}\sim -K_{S},
$$
where $\bar{L}$ is an irreducible conic. We have
$$
2=\bar{L}\cdot D=m \bar{L}\cdot L+\bar{L}\cdot\Omega\geqslant m \bar{L}\cdot C>\bar{L}\cdot L,%
$$
which implies that $L\cap\{O_{1},\ldots,O_{4}\}\ne\varnothing$.
Then $L\in\{L_{12},\ldots,L_{34}\}$, which is~impossible, because
the curve $C$ contains at most two components.

We see that $\mathrm{LCS}(S,\lambda D)$ does not contains lines,
and $C$ is an irreducible conic.

Let $R$ be a hyperplane section of the surface $S$ that contains
the conic $C$. Then
$$
R=C+\bar{C}\sim-K_{S},
$$
where $\bar{C}$ is a $\mathrm{S}_{4}$-invariant line. The
intersection $\bar{C}\cap C$ consists of two points.

The log pair $(S,\bar{C}+C)$ is log canonical. We may assume
$\bar{C}\not\subseteq\mathrm{Supp}(\Omega)$ by
Remark~\ref{remark:convexity}.~Then
$$
1=\bar{S}\cdot D=m \bar{C}\cdot C+\bar{C}\cdot\Omega\geqslant m \bar{C}\cdot C>\bar{C}\cdot C,%
$$
which implies that $\bar{C}\cap C\subset \{O_{1},\ldots,O_{4}\}$.
Then $\bar{C}\in\{L_{12},\ldots,L_{34}\}$, which is impossible.
\end{proof}

\begin{lemma}
\label{lemma:cubic-A2-A2-A2} Let $S$ be a cubic surface in
$\mathbb{P}^{3}$ given by the
equation~\ref{equation:cubic-A2-A2-A2}. Then
$\mathrm{lct}(S,\mathrm{S}_{3}\times\mathbb{Z}_{3})=1$.
\end{lemma}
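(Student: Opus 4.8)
The plan is to mirror the proof of Lemma~\ref{lemma:Cayley-cubic}, after first recording the geometry of $S$. The equation $xyz=t^{3}$ has exactly three singular points, namely $O_{1}=[1:0:0:0]$, $O_{2}=[0:1:0:0]$ and $O_{3}=[0:0:1:0]$, each of type $\mathbb{A}_{2}$, and the group $\mathrm{S}_{3}$ permutes them transitively while $\mathbb{Z}_{3}$ (acting by $t\mapsto\zeta t$) fixes each of them. A short argument shows that the only lines on $S$ are the three sides $\ell_{1}=\{x=t=0\}$, $\ell_{2}=\{y=t=0\}$, $\ell_{3}=\{z=t=0\}$ of the triangle $T=\{t=0\}\cap S$: a line not contained in $\{t=0\}$ would force $x,y,z$ to be proportional to $t$ along it, collapsing the line to a point. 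The three lines are permuted transitively by $\mathrm{S}_{3}$, and $T$ is $G$-invariant. For the upper bound I would compute the log pullback of $(S,T)$ on the minimal resolution: over each $O_{i}$ there are two $(-2)$-curves forming a chain whose two ends meet the strict transforms of the two sides of $T$ through $O_{i}$, and the resulting coefficients on the exceptional curves are all equal to $1$. Hence $(S,T)$ is log canonical but not log terminal, so $\mathrm{lct}(S,\mathrm{S}_{3}\times\mathbb{Z}_{3})\leqslant 1$.

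For the reverse inequality, suppose $\mathrm{lct}(S,\mathrm{S}_{3}\times\mathbb{Z}_{3})<1$. Then there is a $G$-invariant effective $\mathbb{Q}$-divisor $D\equiv -K_{S}$ and a rational $\lambda<1$ such that $(S,\lambda D)$ is not log canonical. Here the argument must diverge from Lemma~\ref{lemma:Cayley-cubic}: the group $\mathrm{S}_{4}$ has no faithful two-dimensional representation, whereas $\mathrm{S}_{3}\times\mathbb{Z}_{3}$ does, so I cannot invoke representation theory to exclude invariant points directly. Instead I would check by hand that the only $G$-fixed points of $\mathbb{P}^{3}$ are $[1:1:1:0]$ and $[0:0:0:1]$: a $\mathbb{Z}_{3}$-fixed point lies in $\{t=0\}$ or equals $[0:0:0:1]$, and inside $\{t=0\}$ the only $\mathrm{S}_{3}$-fixed point is $[1:1:1:0]$, because the permutation representation of $\mathrm{S}_{3}$ on $\langle x,y,z\rangle$ contains a unique one-dimensional subrepresentation. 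Neither $[1:1:1:0]$ nor $[0:0:0:1]$ lies on $S$, so $S$ carries no $G$-fixed point. Since $-(K_{S}+\lambda D)\equiv(1-\lambda)(-K_{S})$ is ample, the set $\mathrm{LCS}(S,\lambda D)$ is connected by Remark~\ref{remark:connectedness}, and it is $G$-invariant; as it cannot reduce to a single ($G$-fixed) point, it must contain a curve.

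To finish I would extract, exactly as in the proof of Lemma~\ref{lemma:Cayley-cubic}, a reduced $G$-invariant curve $C\subseteq\mathrm{LCS}(S,\lambda D)$ together with a decomposition $D=mC+\Omega$, where $m\geqslant 1/\lambda>1$ and $C\not\subseteq\mathrm{Supp}(\Omega)$. The degree estimate $3=-K_{S}\cdot D\geqslant m\deg(C)>\deg(C)$ forces $\deg(C)\leqslant 2$. If $C$ contained a line, then by transitivity of $\mathrm{S}_{3}$ on $\{\ell_{1},\ell_{2},\ell_{3}\}$ its support would contain all three sides, so $\deg(C)\geqslant 3$, a contradiction; hence $C$ is an irreducible conic. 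Its linear span is a $G$-invariant plane $\Pi$, and $\Pi\cap S=C+\bar{C}$, where $\bar{C}$ is a line on $S$; since $C$ and $\Pi$ are $G$-invariant, so is $\bar{C}$. But $\bar{C}\in\{\ell_{1},\ell_{2},\ell_{3}\}$, and $\mathrm{S}_{3}$ permutes these transitively, so $\bar{C}$ cannot be $G$-invariant. This contradiction yields $\mathrm{lct}(S,\mathrm{S}_{3}\times\mathbb{Z}_{3})\geqslant 1$, completing the proof.

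The step I expect to be most delicate is the exclusion of $G$-fixed points on $S$: because $\mathrm{S}_{3}\times\mathbb{Z}_{3}$ admits a faithful two-dimensional representation, the clean representation-theoretic shortcut used for the Cayley cubic is unavailable, and one must instead pin down the fixed locus of $G$ in $\mathbb{P}^{3}$ explicitly and verify that it misses $S$. The verification that $S$ contains precisely three lines, all forming a single $G$-orbit, is the other point that needs care, since it is exactly what makes both the line case and the conic case collapse.
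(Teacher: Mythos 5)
Your proof is correct and follows essentially the same route as the paper's: the $G$-invariant triangle $L_{1}+L_{2}+L_{3}$ gives the upper bound, and for the lower bound the absence of $G$-fixed points on $S$ together with connectedness of $\mathrm{LCS}(S,\lambda D)$ forces a reduced $G$-invariant curve of degree at most two into the locus, which is then ruled out because the only lines of $S$ form a single $G$-orbit of length three. You additionally supply two details the paper leaves implicit --- the explicit determination of the $G$-fixed locus in $\mathbb{P}^{3}$ (needed since, as you correctly observe, $\mathrm{S}_{3}\times\mathbb{Z}_{3}$ does admit a faithful two-dimensional representation, so the representation-theoretic shortcut from the Cayley cubic is unavailable) and the residual-line argument excluding an irreducible invariant conic --- and both check out.
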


\begin{proof}
Put $G=\mathrm{S}_{3}\times\mathbb{Z}_{3}$. Let
$O_{1},O_{2},O_{3}$ be singular points of the surface $S$, and \
$L_{i}\subset S$~be~a~line such that $O_{i}\not\in L_{i}$. Then
$\mathrm{lct}(S,G)\leqslant 1$, because the curve
$L_{1}+L_{2}+L_{3}$ is $G$-invariant.

We suppose that $\mathrm{lct}(S,G)<1$. Then there
is~an~effective~$G$-invariant $\mathbb{Q}$-divisor~$D$~such~that
the~equivalence $D\equiv -K_{S}$ holds, and $(S,\lambda D)$ is not
log canonical, where $\lambda\in\mathbb{Q}$~and~$\lambda<1$.

The surface $S$ does not contain $G$-invariant points. Then
$|\mathrm{LCS}(S,\lambda D)|=+\infty$ by
Remark~\ref{remark:connectedness}, which implies that there are a
$G$-inva\-ri\-ant curve $C\subset S$ and a  rational number
$m\geqslant 1/\lambda$ such~that
$$
D=mC+\Omega,
$$
where $\Omega$ is an effective $\mathbb{Q}$-divisor, whose support
does not contain components of $C$. Then
$$
3=-K_{S}\cdot D=m \mathrm{deg}\big(C\big)-K_{S}\cdot\Omega\geqslant m\mathrm{deg}\big(C\big)>\mathrm{deg}\big(C\big),%
$$
which implies that either $C$ is a line, or $C$ is a conic.

The only lines contained in $S$ are the lines $L_{1}$, $L_{2}$,
$L_{3}$. The group $G$ acts on the set
$$
\big\{L_{1},L_{2},L_{3}\big\}
$$
transitively. Hence, the curve $C$ is neither a line, nor conic.
\end{proof}

\section{Fiberwise maps.}
\label{section:fibers}

Let $Z$ be a smooth curve. Suppose that there is a commutative
diagram
\begin{equation}
\label{equation:commutative-diagram} \xymatrix{
&V\ar@{->}[d]_{\pi}\ar@{-->}[rr]^{\rho}&&\ \bar{V}\ar@{->}[d]^{\bar{\pi}}&\\%
&Z\ar@{=}[rr]&&Z&}
\end{equation}
such that $\pi$ and $\bar{\pi}$ are flat morphisms, and $\rho$ is
a birational map that induces an isomorphism
\begin{equation}
\label{equation:isomorphism}
\rho\big\vert_{V\setminus X}\colon V\setminus X\longrightarrow\bar{V}\setminus\bar{X},%
\end{equation}
where $X$ and $\bar{X}$ are scheme fibers of $\pi$ and $\bar{\pi}$
over a point $O\in Z$, respectively. Suppose that
\begin{itemize}
\item the varieties $V$ and $\bar{V}$ have terminal $\mathbb{Q}$-factorial singularities,%
\item the divisors $-K_{V}$ and $-K_{\bar{V}}$ are $\pi$-ample and $\bar{\pi}$-ample, respectively,%
\item the fibers $X$ and $\bar{X}$ are irreducible.
\end{itemize}

The following example is due to  \cite{Co96}.

\begin{example}
\label{example:dP3-D4} Suppose that $X$ is a smooth cubic surface
that contains lines $L_{1},L_{2},L_{3}$ such that the intersection
$L_{1}\cap L_{2}\cap L_{3}$ consists of single point $P\in X$.
There is commutative~diagram
$$
\xymatrix{
U\ar@{->}[d]_{\alpha}\ar@{-->}[rr]^{\psi}&&\bar{U}\ar@{->}[d]^{\beta}\\%
V\ar@{-->}[rr]^{\rho}&&\ \bar{V},}
$$ %
where $\alpha$ is a blow up of $P$, $\psi$ is an antiflip in the
proper transforms of $L_{1},L_{2},L_{3}$, and
$\beta$~is~a~contraction of the~proper transform of the fiber $X$.
Then $\bar{X}$ is a cubic surface that has one singular point of
type $\mathbb{D}_{4}$. We have $\mathrm{lct}(X)=2/3$ and
$\mathrm{lct}(\bar{X})=1/3$ (see
Example~\ref{example:smooth-del-Pezzo-surfaces}~and~Lemma~\ref{lemma:dP3-D4}).
\end{example}

Which kind of conditions on the fibers $X$ and $\bar{X}$ imply
that $\rho$~is~biregular?

\begin{example}
\label{example:Jihun-del-Pezzos} Suppose that both fibers $X$ and
$\bar{X}$ are nonsingular del Pezzo surfaces such~that
the~inequality $K_{X}^{2}=K_{\bar{X}}^{2}\leqslant 4$ holds. Then
$\rho$ is an isomorphism (see \cite{Pa01}).
\end{example}

The question we asked is local by the curve $Z$. Thus, in the
following, we will not not~assume that the curve $Z$ is
projective. Let us consider two examples with $Z=\mathbb{C}^{1}$
(see \cite{Pa01}).

\begin{example}
\label{example:dP3-E6} Let $V$ be $\bar{V}$ subvarieties in
$\mathbb{C}^{1}\times\mathbb{P}^{3}$ given by the equations
$$
x^{3}+y^{2}z+z^{2}w+t^{12}w^{3}=0\ \text{and}\ x^{3}+y^{2}z+z^{2}w+w^{3}=0,%
$$
respectively, where $t$ is a coordinate on $\mathbb{C}^{1}$, and
$(x,y,z,w)$ are coordinates on $\mathbb{P}^{3}$. The projections
$$
\pi\colon V\longrightarrow\mathbb{C}^{1}\ \text{and}\ \bar{\pi}\colon \bar{V}\longrightarrow\mathbb{C}^{1}%
$$
are fibrations into cubic surfaces. Let $O$ be the point on
$\mathbb{C}^{1}$ given by $t=0$. Then $\bar{X}$ is smooth, the
surface $X$ has one singular point of type $\mathbb{E}_{6}$. Put
$Z=\mathbb{C}^{1}$. Then the map
$$
\big(x,y,z,w\big)\longrightarrow \big(t^{2}x,t^{3}y,z,t^{6}w\big)%
$$
induces a birational map $\rho\colon V\dasharrow\bar{V}$ such that
the~diagrams~\ref{equation:commutative-diagram} and
isomorphism~\ref{equation:isomorphism} exist, and~$\rho$ is not
biregular. But $\mathrm{lct}(X)=1/6$ and
$\mathrm{lct}(\bar{X})=2/3$ (see
Example~\ref{example:smooth-del-Pezzo-surfaces}~and~Lemma~\ref{lemma:dP3-E6}).
\end{example}

\begin{example}
\label{example:dP3-D5} Let $V$ be $\bar{V}$ subvarieties in
$\mathbb{C}^{1}\times\mathbb{P}^{3}$ given by the equations
$$
wz^{2}+zx^{2}+y^{2}x+t^{8}w^{3}=0\ \text{and}\ wz^{2}+zx^{2}+y^{2}x+w^{3}=0,%
$$
respectively, where $t$ is a coordinate on $\mathbb{C}^{1}$, and
$(x,y,z,w)$ are coordinates on $\mathbb{P}^{3}$. The projections
$$
\pi\colon V\longrightarrow\mathbb{C}^{1}\ \text{and}\ \bar{\pi}\colon \bar{V}\longrightarrow\mathbb{C}^{1}%
$$
are fibrations into cubic surfaces. Let $O$ be the point on
$\mathbb{C}^{1}$ given by $t=0$. Then $\bar{X}$ is smooth, the
surface $X$ has one singular point of type $\mathbb{D}_{5}$. Put
$Z=\mathbb{C}^{1}$. Then the map
$$
\big(x,y,z,w\big)\longrightarrow \big(t^{2}x,ty,z,t^{4}w\big)%
$$
induces a birational map $\rho\colon V\dasharrow\bar{V}$ such that
the~diagrams~\ref{equation:commutative-diagram} and
isomorphism~\ref{equation:isomorphism} exist, and~$\rho$ is not
biregular. But $\mathrm{lct}(X)=1/4$ and
$\mathrm{lct}(\bar{X})=2/3$ (see
Example~\ref{example:smooth-del-Pezzo-surfaces}~and~Lemma~\ref{lemma:dP3-D5}).
\end{example}

The following result holds (see
Examples~\ref{example:smooth-del-Pezzo-surfaces} and
\ref{example:Jihun-del-Pezzos}).

\begin{theorem}
\label{theorem:Park-Cheltsov} The map $\rho$ is an isomorphism if
one of the~following conditions hold:
\begin{itemize}
\item the~varieties $X$ and $\bar{X}$ have log terminal singularities, and $\mathrm{lct}(X)+\mathrm{lct}(\bar{X})>1$;%
\item the~variety $X$ has log terminal singularities, and $\mathrm{lct}(X)\geqslant 1$.%
\end{itemize}
\end{theorem}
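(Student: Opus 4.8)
The plan is to argue by contradiction: assuming $\rho$ is not biregular, I would manufacture an effective anticanonical $\mathbb{Q}$-divisor on the fibre $X$ (and on $\bar{X}$) that violates the threshold hypotheses. First I would fix a relatively very ample mobile linear system $\bar{\mathcal{M}}\subseteq|-aK_{\bar{V}}|$ on $\bar{V}$ for $a$ large and divisible, and let $\mathcal{M}=\rho^{-1}_{*}\bar{\mathcal{M}}$ be its strict transform on $V$. Since $\rho$ restricts to the isomorphism \eqref{equation:isomorphism} away from the irreducible fibres $X$ and $\bar{X}$, the map is an isomorphism in codimension one, so $\mathcal{M}$ is mobile and $\mathcal{M}\equiv -aK_{V}$ over $Z$; here irreducibility of the fibres is essential, as it forces these classes to agree numerically over $Z$. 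Writing $X=\pi^{*}(O)$, the fibre has trivial normal bundle, so adjunction gives $K_{V}|_{X}=K_{X}$, whence
$$
\frac{1}{a}\mathcal{M}\big|_{X}\equiv -K_{X},\qquad \frac{1}{a}\bar{\mathcal{M}}\big|_{\bar{X}}\equiv -K_{\bar{X}},
$$
two effective anticanonical $\mathbb{Q}$-divisors to which Definition~\ref{definition:threshold} applies, because $-K_{X}=-K_{V}|_{X}$ and $-K_{\bar{X}}$ are ample and $X,\bar{X}$ are log terminal.

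Next I would invoke the Noether--Fano--Iskovskikh inequality for birational maps of Mori fibrations over a curve (see \cite{Co96}, \cite{Pa01}): if $\rho$ is not an isomorphism, then the pair $(V,\frac{1}{a}\mathcal{M})$ is not canonical, and since $\rho$ is an isomorphism over $Z\setminus O$ while $V$ is terminal, every non-canonical centre lies in the fibre $X$. Thus there is a divisorial valuation $E$ with centre $C_{V}(E)\subseteq X$ and $\mathrm{ord}_{E}(\frac{1}{a}\mathcal{M})>a_{E}(V)$. As $X$ is Cartier and $C_{V}(E)\subseteq X$, we have $\mathrm{ord}_{E}(X)\geqslant 1$, so
$$
a_{E}\Big(V,\ X+\tfrac{1}{a}\mathcal{M}\Big)=a_{E}(V)-\mathrm{ord}_{E}(X)-\mathrm{ord}_{E}\Big(\tfrac{1}{a}\mathcal{M}\Big)<-1,
$$
and $(V,X+\frac{1}{a}\mathcal{M})$ is not log canonical near $X$. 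By inversion of adjunction (the connectedness theorem of \cite{Ko91}) the pair $(X,\frac{1}{a}\mathcal{M}|_{X})$ is then not log canonical, and as $\frac{1}{a}\mathcal{M}|_{X}\equiv -K_{X}$ this forces $\mathrm{lct}(X)<1$. This already settles the second case: if $\mathrm{lct}(X)\geqslant 1$, then $\rho$ is an isomorphism.

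For the first case the argument must be made symmetric. Resolving $\rho$ by a smooth $W$ with morphisms $p\colon W\to V$ and $q\colon W\to\bar{V}$, the strict transforms of $X$ and of $\bar{X}$ coincide on $W$ (both equal the closure of the graph of $\rho|_{V\setminus X}$), so writing $\tilde{X}$ for this common divisor one has $p^{*}X=\tilde{X}+\sum m_{i}E_{i}$ and $q^{*}\bar{X}=\tilde{X}+\sum \bar{m}_{i}E_{i}$ with $p^{*}X\equiv q^{*}\bar{X}\equiv 0$ over $Z$. Restricting the crepant comparison of $K_{W}+\tilde{X}+\frac{1}{a}\mathcal{M}_{W}$ to $\tilde{X}$ realises it simultaneously as an adjunction different for $(V,X+\frac{1}{a}\mathcal{M})$ and for $(\bar{V},\bar{X}+\frac{1}{a}\bar{\mathcal{M}})$, the two differents differing by $\sum(m_{i}-\bar{m}_{i})E_{i}|_{\tilde{X}}$, i.e. by the numerical ambiguity $\equiv_{Z}0$ coming from the fibre. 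The plan is to use this relation, together with the relative Picard-rank-one structure of the link near $O$, to produce a single valuation $E$ over $X=\bar{X}$ and a weight $t\in[0,1]$ with
$$
\frac{a\big(a_{E}(V)+1-\mathrm{ord}_{E}(X)\big)}{\mathrm{ord}_{E}(\mathcal{M})}\leqslant t,\qquad \frac{a\big(a_{E}(\bar{V})+1-\mathrm{ord}_{E}(\bar{X})\big)}{\mathrm{ord}_{E}(\bar{\mathcal{M}})}\leqslant 1-t.
$$
By the computation of the previous paragraph the left-hand sides are upper bounds for $\mathrm{lct}(X)$ and $\mathrm{lct}(\bar{X})$ (each being a log discrepancy divided by a multiplicity for the restricted valuation), so adding the inequalities gives $\mathrm{lct}(X)+\mathrm{lct}(\bar{X})\leqslant 1$, contradicting the hypothesis.

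The main obstacle is precisely the construction of this complementary pair of bounds: one must show that the non-canonicity deficits on the two sides partition the single unit contributed by the fibre relation $\sum(m_{i}-\bar{m}_{i})E_{i}\equiv_{Z}0$, rather than merely reproducing the one-sided estimate $\mathrm{lct}(X)<1$ of the second case. I expect this to follow from the $2$-ray (Sarkisov) structure of $\rho$ as a birational link over $Z$, analysed via the negativity lemma on $W$, which is where the irreducibility of $X$ and $\bar{X}$ enters a second time. The examples of Section~\ref{section:fibers}, in which $\mathrm{lct}(X)+\mathrm{lct}(\bar{X})=1$ and $\rho$ nonetheless fails to be biregular, confirm that the estimate is sharp and that no slack beyond the strict inequality $\mathrm{lct}(X)+\mathrm{lct}(\bar{X})>1$ can be expected.
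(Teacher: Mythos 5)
Your treatment of the second bullet is essentially the paper's argument in different clothing: the paper also reduces non-biregularity to non-canonicity of a pair on $V$ whose boundary involves the transform $\bar{\Gamma}=\rho^{-1}(\Gamma)$ of a free system from $\bar{V}$, then passes to $(V,X+\cdot)$ using that $X$ is Cartier and that base points can only sit in $X$, and finally applies inversion of adjunction (Theorem~17.6 of \cite{Ko91}) to contradict $\mathrm{lct}(X)\geqslant 1$. (The paper secures the Noether--Fano step by adding $\pi^{*}(nD)$ and a small multiple of $\Lambda$ so that $K_{V}+M_{V}$ is genuinely ample and the uniqueness of canonical models applies; your appeal to \cite{Co96}, \cite{Pa01} covers the same point.)

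The first bullet, however, is where your proposal has a genuine gap, and you flag it yourself: the ``complementary pair of bounds'' indexed by a single valuation $E$ and a weight $t$ is exactly the step you do not construct, and the route you sketch for it is doubtful. In particular, the claim that the strict transforms of $X$ and $\bar{X}$ coincide on a common resolution $W$ fails in general: in Example~\ref{example:dP3-D4} the fibre $X$ is contracted by $\rho$, so its strict transform on $W$ is $q$-exceptional while the strict transform of $\bar{X}$ is a different component of the total fibre $p^{*}X=q^{*}\bar{X}$. More importantly, no such valuation-by-valuation bookkeeping is needed. The paper's mechanism is to weight each of the two mobile systems by the threshold of the fibre on the side where that system can acquire base points: one sets
$$
M_{V}=\frac{\mathrm{lct}(\bar{X})-\varepsilon}{n}\,\Lambda+\frac{\mathrm{lct}(X)-\varepsilon}{n}\,\overline{\Gamma},\qquad
M_{\bar{V}}=\frac{\mathrm{lct}(\bar{X})-\varepsilon}{n}\,\bar{\Lambda}+\frac{\mathrm{lct}(X)-\varepsilon}{n}\,\Gamma .
$$
The hypothesis $\mathrm{lct}(X)+\mathrm{lct}(\bar{X})>1$ enters only to make $K_{V}+M_{V}$ and $K_{\bar{V}}+M_{\bar{V}}$ ample, so that uniqueness of the canonical model forces one of the two pairs, say $(V,M_{V})$, to be non-canonical. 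Since $\Lambda$ is free and $\overline{\Gamma}$ is free off $X$, the defect is carried entirely by the summand $\frac{\mathrm{lct}(X)-\varepsilon}{n}\overline{\Gamma}$; adding the Cartier divisor $X$ and restricting via inversion of adjunction yields a non-log-canonical pair $(X,\frac{\mathrm{lct}(X)-\varepsilon}{n}\overline{\Gamma}\vert_{X})$ with $\overline{\Gamma}\vert_{X}\equiv -nK_{X}$, contradicting the definition of $\mathrm{lct}(X)$ alone --- no partition of a deficit between the two fibres is ever required. To complete your proof you would either have to adopt this weighting or actually carry out the two-ray analysis you defer; as written, the first case is not proved.
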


\begin{proof}
Suppose that the variety $X$ has log terminal singularities, the
inequality $\mathrm{lct}(X)\geqslant 1$ holds, and $\rho$ is not
an isomorphism. Let $D$ be a general very ample divisor on $Z$.
Put
$$
\Lambda=\big|-nK_{V}+\pi^{*}(nD)\big|,\ \Gamma=\big|-nK_{\bar{V}}+\bar{\pi}^{*}(nD)\big|,\ \bar{\Lambda}=\rho(\Lambda),\ \bar{\Gamma}=\rho^{-1}(\Gamma),%
$$
where $n$ is a natural number such that $\Lambda$ and $\Gamma$
have no base points. Put
$$
M_{V}={\frac{2\varepsilon}{n}}\,\Lambda+{\frac{1-\varepsilon}{n}}\,\overline{\Gamma},\ M_{\bar{V}}={\frac{2\varepsilon}{n}}\,\bar{\Lambda}+{\frac{1-\varepsilon}{n}}\,\Gamma,%
$$
where $\varepsilon$ is a positive rational number.

The log pairs $(V, M_{V})$ and $(\bar{V}, M_{\bar{V}})$ are
birationally equivalent, and $K_{V}+M_{V}$ and
$K_{\bar{V}}+M_{\bar{V}}$~are ample. The~uniqueness of~canonical
model (see Theorem~1.3.20 in \cite{Ch05umn}) implies that
$\rho$~is~bire\-gular if the singularities of both log pairs $(V,
M_{V})$ and $(V, M_{\bar{V}})$ are canonical.

The linear system $\Gamma$ does not have base points. Thus, there
is a rational number $\varepsilon$ such that the log pair
$(\bar{V}, M_{\bar{V}})$ is canonical. So, the log pair $(V,
M_{V})$ is not canonical. Then the~log pair
$$
\Big(V,\ X+{\frac{1-\varepsilon}{n}}\,\bar{\Gamma}\Big)%
$$
is not log canonical, because $\Lambda$ does not have not base
points, and $\bar{\Gamma}$ does not have base points outside of
the~fiber $X$, which is a Cartier divisor on the~variety $V$. The
log pair
$$
\Big(X,\ {\frac{1-\varepsilon}{n}}\,\bar{\Gamma}\big\vert_{X}\Big)
$$
is not log canonical by Theorem~17.6 in \cite{Ko91}, which is
impossible, because $\mathrm{lct}(X)\geqslant 1$.

To conclude the proof we may assume that the varieties $X$ and
$\bar{X}$ have log terminal singularities, the inequality
$\mathrm{lct}(X)+\mathrm{lct}(\bar{X})>1$ holds, and  $\rho$ is
not an isomorphism.

Let $\Lambda$, $\Gamma$, $\bar{\Lambda}$, $\bar{\Gamma}$ and $n$
be the~same as in the~previous case. Put
$$
M_{V}=\frac{\mathrm{lct}(\bar{X})-\varepsilon}{n}\,\Lambda+\frac{\mathrm{lct}(X)-\varepsilon}{n}\,\overline{\Gamma},\ M_{\bar{V}}=\frac{\mathrm{lct}(\bar{X})-\varepsilon}{n}\,\bar{\Lambda}+\frac{\mathrm{lct}(X)-\varepsilon}{n}\,\Gamma,%
$$
where $\varepsilon$ is a sufficiently  small positive rational
number. Then it follows from the~uniqueness of canonical model
that $\rho$ is biregular if both log pair $(V, M_{V})$ and $(V,
M_{\bar{V}})$ are canonical.

Without loss of generality, we may assume that the~singularities
of the~log pair $(V, M_{V})$ are not canonical. Arguing as in
the~previous case, we see that the~log pair
$$
\Big(X,\
\frac{\mathrm{lct}(X)-\varepsilon}{n}\,\bar{\Gamma}\big\vert_{X}\Big)
$$
is not log canonical, which is impossible, because
$\bar{\Gamma}\vert_{X}\equiv -nK_{X}$.
\end{proof}

The assertion of Theorem~\ref{theorem:Park-Cheltsov} can not
be~improved (see Examples~\ref{example:dP3-D4},
\ref{example:dP3-E6}, \ref{example:dP3-D5}).

\end{document}